\newcommand{\RR}{\ensuremath{\mathbb R}}
\newcommand{\CC}{\ensuremath{\mathbb C}}
\newcommand{\ds}{\displaystyle}
\newcommand{\uu}{v}
\newcommand{\ff}{p}
\newcommand{\hu}{\hat{u}}
\newcommand{\huu}{\hat{v}}
\newcommand{\hhh}{\hat{q}}
\newcommand{\hg}{\hat{g}}
\newcommand{\cR}{{\cal R}}
\begin{document}

\title{A new Algorithm Based on Factorization for Heterogeneous
  Domain Decomposition}

\author{Martin J. Gander\thanks{Section de math\'ematiques,
    Universit\'e de Gen\`eve, 2-4 rue du Li\`evre, CP 64, CH-1211
    Gen\`eve 4, Switzerland.  \texttt{Martin.Gander@unige.ch}} \and
  Laurence Halpern\thanks{LAGA, Universit\'e Paris XIII, 99 Avenue
    J.-B. Cl\'ement, 93430 Villetaneuse, France.
    \texttt{halpern@math.univ-paris13.fr}} \and V\'eronique
  Martin\thanks{LAMFA UMR-CNRS 7352, Universit\'e de Picardie Jules
    Verne, 33 Rue St. Leu, 80039 Amiens, France.
    \texttt{veronique.martin@u-picardie.fr}}}

\maketitle

\begin{abstract}
Often computational models are too expensive to be solved in the
entire domain of simulation, and a cheaper model would suffice away
from the main zone of interest. We present for the concrete example of
an evolution problem of advection reaction diffusion type a
heterogeneous domain decomposition algorithm which allows us to
recover a solution that is very close to the solution of the fully
viscous problem, but solves only an inviscid problem in parts of the
domain. Our new algorithm is based on the factorization of the
underlying differential operator, and we therefore call it
factorization algorithm.  We give a detailed error analysis, and show
that we can obtain approximations in the viscous region which are much
closer to the viscous solution in the entire domain of simulation than
approximations obtained by other heterogeneous domain decomposition
algorithms from the literature.
\end{abstract}

\begin{keywords}
Heterogeneous domain decomposition, viscous problems with inviscid
approximations, transmission conditions, factorization algorithm
\end{keywords}

\begin{AMS}
65M55, 65M15
\end{AMS}

\section{Introduction}

The coupling of different types of partial differential equations is
an active field of research, since the need for such coupling arises
in various applications. A first main area is the simulation of
complex objects, composed of different materials, which are naturally
modeled by different equations; fluid-structure interaction is a
typical example, and many techniques have been developed for this
type of coupling problems, see for example the book
\cite{Morand:1995:FSI}, or the review on the immersed boundary method
\cite{Mittal:2005:IBM}, and \cite{Deparis:2007:HDD} for domain
decomposition coupling techniques. A very important area of
application is the simulation of the cardiovascular system
\cite{Quarteroni:2009:CMA}. A second main area is when homogeneous
objects are simulated, but the partial differential equation modeling
the object is too expensive to solve over the entire object, and a
simpler, less expensive model would suffice in most of the object to
reach the desired accuracy; air flow around an airplane is a typical
example, where viscous effects are important close to the airplane,
but can be neglected further away, see the early publication
\cite{Glowinski:1988:OCV}, and also \cite{coclici:2001:AOA} and the
references therein. An automatic approach for neglecting the diffusion
in parts of the domain is the $\chi$-formulation, see
\cite{Lai:1998:ADE} \cite{brezzi:1989:ASA}, and there are also
techniques based on virtual control, originating in
\cite{Periaux:1988:OCV}, see \cite{Agoshkov:2006:OCI} for the case
with overlap, and \cite{Gervasio:2001:HCB} for the case without, and
also \cite{Discacciati:2010:OTC} for virtual control with variational
coupling conditions.  A third emerging area is the coupling of
equations across dimensions, for example the blood flow in the artery
can be modeled by a one dimensional model, but in the heart, it needs
to be three dimensional, see for example \cite{Formaggia:2001:OTC}.
All these techniques have become known in the domain decomposition
community under the name heterogeneous domain decomposition methods, a
terminology sparked by the review \cite{Quarteroni:1992:HDD}, and the
literature has become vast in this field.

We are interested in this paper in the second situation, where the
motivation for using different equations comes from the fact that we
would like to use simpler, less expensive equations in areas of the
domain where the full model is not needed, and we use as our guiding
example the advection reaction diffusion equation.  We are in
principle interested in the fully viscous solution, but we would like
to solve only an advection reaction equation for computational savings
in part of the domain. Coupling conditions for this type of problem
have been developed in the seminal paper \cite{Gastaldi:1989:OTC}, but
with the first situation described above in mind, i.e. there is indeed
a viscous and an inviscid physical domain, and the coupling conditions
are obtained by a limiting process as the viscosity goes to zero, see
also \cite{Gastaldi:1990:OCT}, and
\cite{barbu2000class,Coclici:2001:AHD} for an innovative correction
layer, and \cite{MR1746243} for the steady case.

Dubach developed in his PhD thesis \cite{Dubach:1993:CRE} coupling
conditions based on absorbing boundary conditions, and such conditions
have been used in order to define heterogeneous domain decomposition
methods in \cite{Gander:2001:OSC}. A fundamental question however in
the second situation described above is how far the solution obtained
from the coupled problem is from the solution of the original, more
expensive one on the entire domain. A first comparison of different
transmission conditions focusing on this aspect appeared in
\cite{Gander:2006:ADP}.  In \cite{Gander:2009:VPW}, coupling
conditions were developed for stationary advection reaction diffusion
equations in one spatial dimensions, which lead to solutions of the
coupled problem that can be exponentially close to the fully viscous
solution, and rigorous error estimates are provided. The coupling
conditions are based on the factorization of the differential
operator, see also \cite{Loheac:1993:PAO}, and the exact factorization
can be used in this one dimensional steady case.  We study in this
paper time dependent advection reaction diffusion problems, where the
exact factorization of the differential operator can not be used any
more, due to the non-local nature of the factors, and new ideas are
needed in order to obtain better coupling conditions than the
classical ones developed for situation one, where the domains are
really physically different.

We present in Section \ref{Sec2} our new factorization algorithm. In
Section \ref{sec:wpalgo1} and \ref{sec:wpalgo2} we give a detailed
error analysis of our algorithm, and prove asymptotic error estimates
when the viscosity is becoming small. In Section \ref{SecNum} we
present numerical experiments which show that our theoretical error
estimates are sharp, and that the new factorization algorithm gives
approximate solutions which are one order of magnitude more accurate
in the viscous region than the best heterogeneous domain decomposition
methods known from the literature. 

\section{A new coupling algorithm based on factorization}\label{Sec2}

We now explain how the factorization technique that led to coupling
conditions of excellent quality for one dimensional problems in
\cite{Gander:2009:VPW} can be used to obtain a new coupling algorithm
for evolution problems which we will call factorization algorithm.

\subsection{Model problem} 

We consider the time dependent advection reaction diffusion equation 
\begin{equation}\label{eqadvisq}
  \begin{array}{rcll}
  {\cal L}_{ad}u:=\displaystyle\partial_t u
      -\nu \partial_{x}^2u +
      a\partial_x u+cu & = & f  & 
      \mbox{in $(-L_1,L_2)\times(0,T)$},\\
   {\cal B}_1u(-L_1,\cdot) & = & g_1\quad & \mbox{on $(0,T)$}, \\
   {\cal B}_2u(L_2,\cdot) & = & g_2 & \mbox{on $(0,T)$},\\
   u(x,0) & = & h & \mbox{in $(-L_1,L_2)$},
  \end{array}
\end{equation}
where $a$ is the velocity field, $\nu>0$ is the viscosity, and $c>0$
is a reaction term. The ${\cal B}_j$, $j=1,2$ are suitable boundary
operators, representing Dirichlet or absorbing boundary conditions.
We consider the following choice of these operators,
depending on the sign of the advection
term $a$,
\begin{equation}\label{eq:defbo}
\begin{array}{|c||c|c|}
\hline
& {\cal B}_1& {\cal B}_2\\
\hline
a > 0 & Id & \partial_t +a\partial_x +c\\
\hline
a <0 & Id & Id\\
\hline
\end{array}
\end{equation}
In the case $a > 0$, the flow is given at the inflow boundary, and an
absorbing boundary condition is prescribed at the outflow
boundary. This can be compared to the situation of the tail of a wing,
where the flow goes from the complicated model region into the
simplified model region. For negative $a$, the flow is prescribed at
the inflow and outflow boundary, which can be compared to the
situation of the front of the wing, where the flow goes from the
simplified model region into the complicated model region, and a
boundary layer forms.

We assume that the initial data $h$ is compactly supported in $
(-L_1,0)$. The forcing term $f$ is compactly supported in
$(-L_1,L_2)\times(0,T]$, and the boundary values $g_1$ and $g_2$ are
compactly supported in $(0,T]$ .  Regularity and compatibility
conditions on the data need to be enforced to have a sufficiently
regular solution, see Section \ref{sec:wpalgo1}.

\subsection{The new algorithm based on factorization}

Using Nirenberg's factorization, we can factor the advection-diffusion
operator into a product of two evolution operators in opposite $x$
directions,
$$
  -\nu \partial_{x}^2 + a \partial_{x} + c +\partial_{t} =  -\nu(\partial_{x} +{\cal P}_+(\partial_t))\,(\partial_{x} +{\cal P}_-(\partial_t)) \pmod{{\cal C}^\infty}.
$$
This factorization has been used to design absorbing boundary
conditions and paraxial equations for hyperbolic problems, see
\cite{BEHJ}. For parabolic problems, Nataf and coauthors
\cite{Loheac:1993:PAO,MR1060603} computed approximations of $u$ via a
double sweep, and also obtained transmission conditions for Schwarz
domain decomposition methods \cite{MR1314997}, which led to the new
class of optimized Schwarz methods, see \cite{Gander:2006:OSM} for an
overview. The same factorization can also be used to obtain incomplete
LU preconditioners \cite{gander2000ailu,gander2005incomplete}, and is
the underlying mathematical structure of the recently developed
sweeping preconditioner \cite{engquist2011sweeping}. We now use this
factorization to define our new factorization algorithm: we define two
subdomains, $\Omega_1=(-L_1,0)$ and $\Omega_2=(0,L_2)$, and want to
couple the advection-diffusion equation in $\Omega_1$ with an
advection equation in $\Omega_2$, defined by the transport operator
${\cal L}_{a}\equiv \partial_t +a \partial_x +c$. Our goal is to
obtain a coupled solution which is as close as possible to the fully
viscous solution of the original problem.

We start with the case $a > 0$, where according to (\ref{eq:defbo})
the exterior boundary condition is ${\cal L}_{a}u(L_2,\cdot)=g_2$. 
Suppose there exists a decomposition ${\cal
  L}_{ad}= \widetilde{{\cal L}}_{ma}{\cal L}_{a}$ with ${\cal L}_{a}$
a transport operator propagating to the right, and $\widetilde{{\cal
    L}}_{ma}$ a transport operator propagating to the left. The
original problem
$$
\left\{
\begin{aligned}
  &\widetilde{{\cal L}}_{ma}{\cal L}_{a}u=f  &&\mbox{ in } \Omega \times (0,T),\\
  & u(-L_1,\cdot)=g_1 &&\mbox{ on } (0,T), \\
  &{\cal L}_{a}u(L_2,\cdot)=g_2 &&\mbox{ on } (0,T),\\
  &u(\cdot,0)=h &&\mbox{ in } \Omega
\end{aligned}
\right.
$$
can then be solved by introducing ${u}_{{ma}}:={\cal L}_{a}u$, and
solving the two problems
\begin{align}\label{algoparfait}
\left\{
\begin{aligned}
  &\widetilde{{\cal L}}_{ma}u_{ma}=f  \mbox{ in } \Omega_2 \times (0,T),\\
  &u_{ma}(L_2,\cdot)=g_2\mbox{ on } (0,T) ,\\
  &u_{ma}(\cdot,0)={\cal L}_{a}u(\cdot,0)\mbox{ in } \Omega_2,
\end{aligned}
\right.
& &
\left\{
\begin{aligned}
  &\widetilde{{\cal L}}_{ma}{\cal L}_{a}u_{ad}=f  \mbox{ in } \Omega_1 \times (0,T),\\
  & u_{ad}(-L_1,\cdot)=g_1\mbox{ on } (0,T), \\
  &{\cal L}_{a}u_{ad}(0,\cdot)=u_{ma}(0,\cdot)\mbox{ on } (0,T),\\
  & u_{ad}(\cdot,0)=h\mbox{ in } \Omega_1,
\end{aligned}
\right.
\end{align}
which leads to $u_{ad}=u_{|\Omega_1}$. Unfortunately, the exact
factorization ${\cal L}_{ad}= \widetilde{{\cal L}}_{ma}{\cal L}_{a}$
is expensive, but we can use an approximation with a remainder,
\begin{equation}\label{eq:factop}
{\cal L}_{ad}=  \frac{\nu}{a^2}({\cal L}_{ma}{\cal L}_{a} -  \cR\,) \mbox{ with }
 \cR\, =(\partial_{t}+c)^2 \mbox{ and } {\cal L}_{ma}= \partial_{t} -a\partial_{x} +c+\frac{a^2}{\nu}.
\end{equation}
The viscous solution $u$ satisfies ${\cal L}_{ma}{\cal
  L}_{a}u=a^2f/\nu +{\cal R }u$, and the algorithm corresponding to
(\ref{algoparfait}) is
\begin{align*}
\left\{
\begin{aligned}
  &{\cal L}_{ma}u_{ma}=\frac{a^2}{\nu}f+\cR\,u   \mbox{ in } \Omega_2 \times (0,T),\\
  &u_{ma}(L_2,\cdot)=g_2\mbox{ on } (0,T),\\
  &u_{ma}(\cdot,0)=f(\cdot,0)+\nu d_x^2h\mbox{ in } \Omega_2,
\end{aligned}
\right.
& &
\left\{
\begin{aligned}
  &{\cal L}_{ad}u_{ad}=f  \mbox{ in } \Omega_1 \times (0,T),\\
  & u_{ad}(-L_1,\cdot)=g_1\mbox{ on } (0,T), \\
  &{\cal L}_{a}u_{ad}(0,\cdot)=u_{ma}(0,\cdot)\mbox{ on } (0,T),\\
  & u_{ad}(\cdot,0)=h\mbox{ in } \Omega_1.
\end{aligned}
\right.
\end{align*}
Since $u$ is unknown to evaluate the remainder, we approximate it by
solving an advection equation, and our new factorization
algorithm is
\begin{align}\label{algoapos}
&\left\{
\begin{aligned}
  &{\cal L}_{a}u_a^k=f \mbox{ in } \Omega_2 \times (0,T),\\
  &u_a^k(0,\cdot)=u_{ad}^{k-1}(0,\cdot) \mbox{ on $(0,T)$},\\
  &u_a^k(\cdot,0)=h\mbox{ in } \Omega_2,
\end{aligned}
\right.\nonumber
\\&\left\{
\begin{aligned}
  &{\cal L}_{ma}u_{ma}^k=\frac{a^2}{\nu}f+\cR\,u_a^k   \mbox{ in } \Omega_2 \times (0,T),\\
  &u_{ma}^k(L_2,\cdot)=g_2 \mbox{ on $(0,T)$},\\
  &u_{ma}^k(\cdot,0)=f(\cdot,0)+\nu d_x^2h\mbox{ in } \Omega_2,
\end{aligned}
\right.
\\&\left\{
\begin{aligned}
  &{\cal L}_{ad}u_{ad}^k=f  \mbox{ in } \Omega_1 \times (0,T),\\
  & u_{ad}^k(-L_1,\cdot)=g_1 \mbox{ on $(0,T)$},\\
  &{\cal L}_{a}u_{ad}^k(0,\cdot)=u_{ma}^k(0,\cdot)\mbox{ on $(0,T)$},\\
  &u_{ad}^k(\cdot,0)=h\mbox{ in } \Omega_1,
\end{aligned}
\right.\nonumber
\end{align}
where we start with a given initial guess
$u_{ad}^{0}(0,\cdot)=g_{ad}^0$.  We will prove well posedness of this
algorithm in Section \ref{sec:wpalgo1}, and give precise error
estimates when $\nu$ is small, which show that the new factorization
algorithm gives one and a half orders of magnitude better solutions in
the viscous subregion than the best other coupling algorithms from the
literature.

When $a<0$, we have the factorization with remainder in reverse order,
${\cal L}_{ad}= \frac{\nu}{a^2}({\cal L}_{a}{\cal L}_{ma} - \cR\,)$,
and now the operator ${\cal L}_{a}$ propagates to the left, and ${\cal
  L}_{ma}$ to the right.  The viscous solution $u$ satisfies ${\cal
  L}_{a}{\cal L}_{ma}u=a^2f/\nu +{\cal R }u$, and introducing
$u_a:={\cal L}_{ma}u$, the algorithm corresponding to
(\ref{algoparfait}) is
\begin{align*}
\left\{
\begin{aligned}
  &{\cal L}_{a}u_{a}=\frac{a^2}{\nu}f+\cR\,u   \mbox{ in } \Omega_2 \times (0,T),\\
  &u_{a}(L_2,\cdot)={\cal L}_{ma}u(L_2,\cdot)\mbox{ on } (0,T),\\
  &u_{a}(\cdot,0)={\cal L}_{ma}u(\cdot,0)\mbox{ in } \Omega_2,
\end{aligned}
\right.
& &
\left\{
\begin{aligned}
  &{\cal L}_{ad}u_{ad}=f  \mbox{ in } \Omega_1 \times (0,T),\\
  & u_{ad}(-L_1,\cdot)=g_1\mbox{ on } (0,T), \\
  &{\cal L}_{ma}u_{ad}(0,\cdot)=u_{a}(0,\cdot)\mbox{ on } (0,T),\\
  & u_{ad}(\cdot,0)=h\mbox{ in } \Omega_1.
\end{aligned}
\right.
\end{align*}
Since $u$ is unknown to evaluate the remainder and the boundary
conditions, we approximate it again by solving an advection equation,
and our new factorization algorithm becomes
\begin{align}\label{algoaneg}
&\left\{
\begin{aligned}
  &{\cal L}_{a}u_a^1=f \mbox{ in } \Omega_2 \times (0,T),\\
  &u_a^1(L_2,\cdot)=g_2\mbox{ on $(0,T)$},\\
  &u_a^1(\cdot,0)=h\mbox{ in } \Omega_2,
\end{aligned}
\right.\nonumber\\
&\left\{
\begin{aligned}
  &{\cal L}_{a}u_{a}^2=\frac{a^2}{\nu}f+\cR\,u_a^1   \mbox{ in } \Omega_2 \times (0,T),\\
  &u_{a}^2(L_2,\cdot)={\cal L}_{ma}u_a^1(L_2,\cdot)\mbox{ on $(0,T)$},\\
  &u_{a}^2(\cdot,0)={\cal L}_{ma}u_a^1(\cdot,0) \mbox{ in } \Omega_2,
\end{aligned}
\right.\\
&\left\{
\begin{aligned}
  &{\cal L}_{ad}u_{ad}=f  \mbox{ in } \Omega_1 \times (0,T),\\
  & u_{ad}(-L_1,\cdot)=g_1\mbox{ on $(0,T)$},\\
  &{\cal L}_{ma}u_{ad}(0,\cdot)=u_{a}^2(0,\cdot)\mbox{ on $(0,T)$},\\
  &u_{ad}(\cdot,0)=h\mbox{ in } \Omega_1,
\end{aligned}
\right.\nonumber
\end{align}
where one could also directly compute ${\cal
  L}_{ma}u_a^1(L_2,\cdot)=2g_2'+(2c+a^2/\nu)g_2-f(L_2,\cdot)$ and
${\cal L}_{ma}u_a^1(\cdot,0)=f(\cdot,0)-2ad_xh+a^2h/\nu$. There is no
iteration for $a<0$ in the algorithm, because the boundary condition
$g_2$ at $x=L_2$ in the first step can not be updated naturally from
the viscous solution $u_{ad}$ in $\Omega_1$. We will study this
algorithm in detail in Section \ref{sec:wpalgo2}, and show that it
gives an order of magnitude better solutions in the viscous subregion
than the other coupling algorithms from the literature.

\subsection{Well-posedness results for advection reaction 
diffusion problems}

We work in the usual Sobolev spaces in time and space, $H^s(0,T)$ and
$H^s(\Omega)$ for $\Omega \subset \RR$, $H^s(\Omega\times(0,T))$ in
the hyperbolic case, and the anisotropic spaces $H^{r,s}(\Omega \times
(0,T))$ in the parabolic case. For clarity, we will add an index
defining time or space in the Sobolev space, for instance $H^s_t\equiv
H^s(0,T)$.  We introduce for any domain $\Omega\subset\RR$ the
anisotropic Sobolev spaces (see \cite{Lions:1968:PAL})
\begin{equation} \label{eq:Sobanis}
  H^{r,s}(\Omega \times (0,T))= L^2(0,T; H^{r}(\Omega))\cap 
    H^{s}(0,T; L^2(\Omega)).
\end{equation}
If $u$ is in $ H^{r,s}(\Omega \times (0,T))$, then for 
any integer $j$ and $k$, we have
\begin{equation} \label{eq:derSobanis}
   \frac{\partial^j}{\partial x^j} \frac{\partial^k}{\partial t^k}u 
     \in H^{\mu,\nu}(\Omega \times (0,T)),\quad \mbox{where}\quad 
       \frac{\mu}{r}=\frac{\nu}{s}=1-(\frac{j}{r}+ \frac{k}{s}).
\end{equation}
We introduce the space $V^{r,s}$ of traces of functions in
$H^{r,s}(\Omega\times(0,T))$ for the half-space $\Omega=\RR^-$ (and
similarly for $\Omega=\RR^+$). Denoting by $f_k$ the trace of the k-th
derivative in time on the initial line, $x\in \RR^-$, and by $g_j$ the
trace of the j-th derivative in space on the boundary $x=0$, $t\in
(0,T)$, the trace space $V^{r,s}$ is defined by
\begin{equation} \label{eq:traces}
  \begin{array}{c}
    V^{r,s} := \left\{ (f_k,g_j) \in 
      \prod_{k < s-\frac{1}{2}}H^{p_k}(\Omega) \times 
      \prod_{j < r-\frac{1}{2}}H^{\mu_j}
      (0,T),\right.\\[2mm]
    p_k=\frac{r}{s}(s-k-\frac{1}{2}),\quad 
      \mu_j=\frac{s}{r}(r-j-\frac{1}{2}),\\         
    \frac{\partial^kg_j}{\partial t^k}(0)= 
        \frac{\partial^jf_k}{\partial x^j}(0),
        \quad\mbox{ if } \frac{j}{r}+ \frac{k}{s} < 
        1 -\frac{1}{2}(\frac{1}{r}+ \frac{1}{s}),\\[2mm]
    \left.\int_0^{\infty}|
        \frac{\partial^jf_k}{\partial x^j}(\sigma^s)-
        \frac{\partial^kg_j}{\partial t^k}(\sigma^r)
        |^2 \frac{d\sigma}{\sigma} < \infty, 
        \quad\mbox{ if } \frac{j}{r}+ \frac{k}{s} = 
        1 -\frac{1}{2}(\frac{1}{r}+ \frac{1}{s})
    \right\}.
  \end{array}
\end{equation}
\begin{theorem}[\cite{Lions:1968:PAL}]\label{th:trace}
  For positive real numbers $r$ and $s$ such that
  $1-\frac{1}{2}(\frac{1}{r}+ \frac{1}{s}) >0$, the trace map
  \begin{equation} \label{eq:tracemap}
     u \mapsto  \left\{
      \{\frac{\partial^ku}{\partial t^k}(x,0)\}_{k < s-\frac{1}{2}},
      \{\frac{\partial^ju}{\partial x^j}(0,t)\}_{j < r-\frac{1}{2}}
                \right\}
  \end{equation}
  is defined and continuous from $H^{r,s}(\Omega \times (0,T))$ onto
  $V^{r,s}$.
\end{theorem}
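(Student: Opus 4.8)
The plan is to follow the Fourier-analytic and interpolation approach of Lions and Magenes for anisotropic Sobolev spaces. First I would reduce to the model configuration in which the finite time interval $(0,T)$ is replaced by the whole line $\RR$ and the domain is the half-line $\Omega=\RR^-$: the passage from $(0,T)$ to $\RR$ is achieved by a bounded extension operator that preserves the traces at $t=0$, and density of smooth functions allows me to compute all traces classically and then pass to the limit. After a tangential Fourier transform in $t$, the norm on $H^{r,s}(\RR^-\times\RR)$ is equivalent to a weighted norm built from the anisotropic symbol $1+|\tau|^{2s}+|\xi|^{2r}$, so that the mismatch between the $x$-scaling and the $t$-scaling is captured by the single ratio $r/s$.

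Next I would handle the two families of traces separately. For the spatial trace on $\{x=0\}$, the partial Fourier transform in $t$ turns the problem, for each frequency $\tau$, into a one-dimensional estimate in $x$; integrating the resulting ODE bound over $\tau$ shows that $\partial_x^j u(0,\cdot)$ lies in $H^{\mu_j}(\RR)$ with $\mu_j=\frac{s}{r}(r-j-\frac12)$, uniformly in $u$, which gives continuity. Exchanging the roles of $x$ and $t$, and of $r$ and $s$, the same argument yields that the initial trace $\partial_t^k u(\cdot,0)$ lies in $H^{p_k}(\RR^-)$ with $p_k=\frac{r}{s}(s-k-\frac12)$. The hypothesis $1-\frac12(\frac1r+\frac1s)>0$ is exactly what guarantees that the zeroth-order traces exist, and it determines which higher-order traces are meaningful, namely $j<r-\frac12$ and $k<s-\frac12$.

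The delicate point is the corner $(x,t)=(0,0)$, where the two boundaries meet and the traces cease to be independent. I would derive the compatibility relations of (\ref{eq:traces}) as follows. When $\frac{j}{r}+\frac{k}{s}<1-\frac12(\frac1r+\frac1s)$, the mixed quantity $\partial_x^j\partial_t^k u$ still admits a continuous value at the corner, which forces the pointwise matching $\partial_t^k g_j(0)=\partial_x^j f_k(0)$. At the borderline index $\frac{j}{r}+\frac{k}{s}=1-\frac12(\frac1r+\frac1s)$ this continuous value no longer exists, but the difference of the two one-sided limits has a finite $L^2(\frac{d\sigma}{\sigma})$ norm, which is precisely the integral compatibility condition. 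Together these show that the trace map lands inside $V^{r,s}$.

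The main obstacle, and the substance of the ``onto'' assertion, is the construction of a bounded right inverse. I would build the lifting in two stages: first lift the boundary data $\{g_j\}$ by an explicit Fourier multiplier kernel in $x$ to obtain a function $w$ with the prescribed spatial traces; then lift the corrected initial data $\{f_k-\partial_t^k w(\cdot,0)\}$, requiring the second lifting to have vanishing spatial traces so as not to disturb $\{g_j\}$. The compatibility conditions are exactly what make the corrected initial data admissible for this second step, since they force $f_k-\partial_t^k w(\cdot,0)$ to vanish to the right order at $x=0$. Checking that the borderline integral condition is both necessary and sufficient for the corner-matched second lifting to remain in $H^{r,s}(\RR^-\times\RR)$ is the hardest estimate, and is where a weighted Hardy-type inequality enters. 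Summing the two stages produces a continuous operator $V^{r,s}\to H^{r,s}(\Omega\times(0,T))$ that inverts the trace map, which establishes surjectivity and completes the proof.
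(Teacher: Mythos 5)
The paper does not prove this statement at all: Theorem \ref{th:trace} is quoted directly from Lions and Magenes \cite{Lions:1968:PAL}, so there is no in-paper argument to compare yours against. Judged on its own terms, your outline is a faithful reconstruction of the strategy in that reference: reduction to the half-space model by extension, the Fourier characterization of $H^{r,s}$ via the anisotropic symbol $1+|\tau|^{2s}+|\xi|^{2r}$, the half-derivative loss yielding the exponents $\mu_j=\frac{s}{r}(r-j-\frac12)$ and $p_k=\frac{r}{s}(s-k-\frac12)$, the corner analysis split into the strict-inequality regime (pointwise matching) and the borderline regime (the $L^2(d\sigma/\sigma)$ integral condition), and surjectivity via a two-stage lifting whose admissibility is exactly what the compatibility conditions in \eqref{eq:traces} encode. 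Two caveats. First, your claim that the hypothesis $1-\frac12(\frac1r+\frac1s)>0$ is ``exactly what guarantees that the zeroth-order traces exist'' and ``determines which higher-order traces are meaningful'' is imprecise: the individual traces $\partial_x^ju(0,\cdot)$ and $\partial_t^ku(\cdot,0)$ exist as soon as $j<r-\frac12$ and $k<s-\frac12$, which requires only $r>\frac12$ and $s>\frac12$ for the zeroth traces; the displayed hypothesis is the $j=k=0$ instance of the corner condition, i.e., it is what makes the corner value $u(0,0)$ well defined and hence the compatibility system nonvacuous. Second, your argument is an outline rather than a proof: the genuinely hard steps --- the borderline Hardy-type estimate and the boundedness of the two-stage lifting at the critical indices --- are named but not carried out, and that is where essentially all of the work in Lions--Magenes lies. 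As a blind reconstruction of the architecture of the cited proof, however, it is sound.
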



We start with well-posedness results for the advection equation, by
stating a general result, applicable to ${\cal L}_{a}$ in $\Omega$ or
$\Omega_2$, and ${\cal L}_{ma}$ in $\Omega_2$. To this end, we
introduce ${\cal O}=(x_1,x_2)$ and consider
\begin{equation}\label{eq:genadv}
  {\cal L}_bv:=\partial_t v +b \,\partial_x v +\eta v=p \mbox{ in
  }{\cal O}\times(0,T).
\end{equation}
Let ${\cal M}_{b}$ be the spatial part of the operator ${\cal L}_{b}$,
\textit{i.e.} ${\cal L}_{b}=\partial_t +{\cal M}_{b}$. We denote the
boundary point where the flux enters the domain by $x^-$, the other
boundary point by $x^+$, and define the characteristic time
$\tau(x):=\inf \{t \ge 0, \mbox{s.t. } x-at \notin \bar{{\cal O}}\}$.
If $b>0$, $x^-= x_1$ and $\tau(x)= \frac{x-x_1}{b}$, and if $b<0$,
$x^-=x_2$ and $\tau(x)= \frac{x-x_2}{b}$. Note that $\tau$ is a continuous
function of $x$.
We therefore equip (\ref{eq:genadv}) with initial and boundary
conditions
\begin{equation}\label{eq:genadvdata}
  v(\cdot,0)=h,\quad v(x^-,\cdot)=g.
\end{equation}
The following
well-posedness result can be found in \cite{Metivier}.
\begin{theorem}\label{th:hyperbolic1}
  If $ \ff\in L^2({\cal O}\times(0,T))$, $g \in L^2(0,T)$ and $v_0 \in
  L^2({\cal O})$, then the transport problem
  (\ref{eq:genadv},\ref{eq:genadvdata}) has a unique weak solution $v
  \in L^2_{x,t}$, given by (the characteristic function of $\omega$ in
  $\RR^2$ is denoted by $\mathbf{1}_\omega$)
\begin{equation}\label{eq:addistr}
\begin{aligned}
v(x,t)&= h(x-bt)e^{-\eta t} \mathbf{1}_{t<\tau(x)}
+g(t-\tau(x))e^{-\eta\tau(x)}\mathbf{1}_{t>\tau(x)}  \\
&\qquad+\ds\int_{(t-\tau(x))^+}^t \ff(x-b(t-s),s) e^{-\eta(t-s)}\,ds.
\end{aligned}
\end{equation}
If for some $\gamma>0$ we have $h \in H^\gamma({\cal O})$, $g \in
H^{\gamma}(0,T)$ and $ \ff\in H^{\gamma}({\cal O}\times(0,T))$, with
the compatibility conditions 
\begin{equation}\label{eq:CCgenadv}
d_t^kg(0)=
  ( \sum_{j=0}^{k-1}(-{\cal M}_{b})^j  \partial_t^{k-1-j}
\ff)(x^-,0)+ (-{\cal M}_{b})^k h(x^-)\quad \mbox{for}\quad 0\le k \le \gamma-1,
\end{equation}
then $v\in H^{\gamma}({\cal O}\times(0,T))$ and
$v(x^+,\cdot)\in H^{\gamma}(0,T)$.  Furthermore, we have for $0\le k
\le \gamma$ the estimates
%
\begin{equation}\label{eq:estgenadv}
\eta  \|\partial_t^k v\|_{L^2_{x,t}}^2
+|b|\|\partial_t^k v(x^+,\cdot)\|_{L^2_{t}}^2
\le
\frac{1}{\eta}\| \partial_t^k \ff \|_{L^2_{x,t}}^2 
+\|  \partial_t^k v(\cdot,0)\|_{L^2_{x}}^2
+|b|\|d_t^k g \|_{L^2_{t}}^2.
\end{equation}
\end{theorem}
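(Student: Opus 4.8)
The plan is to prove the three claims in sequence: first the explicit solution formula \eqref{eq:addistr} for merely $L^2$ data, then the regularity statement $v\in H^\gamma$ under the compatibility conditions \eqref{eq:CCgenadv}, and finally the energy estimate \eqref{eq:estgenadv}. For the solution formula, I would observe that along each characteristic $x\mapsto x-bt$ the operator ${\cal L}_b$ reduces to the ordinary differential operator $\tfrac{d}{dt}+\eta$ acting on $v$ restricted to that characteristic. Integrating this ODE with the integrating factor $e^{\eta t}$, starting either from the initial line $t=0$ (for points whose backward characteristic reaches $t=0$ inside $\bar{\cal O}$, i.e. $t<\tau(x)$) or from the inflow boundary $x=x^-$ at time $t-\tau(x)$ (when $t>\tau(x)$), directly yields the three terms in \eqref{eq:addistr}. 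I would then verify that this formula defines a function in $L^2_{x,t}$ and satisfies the equation and data in the weak/distributional sense, with uniqueness following from the fact that any weak solution must agree with the ODE solution along almost every characteristic; this is essentially the content of the cited reference \cite{Metivier} and can be stated rather than belabored.

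For the regularity claim I would proceed by induction on the order of the time derivative. Formally differentiating \eqref{eq:genadv} $k$ times in $t$ shows that $w:=\partial_t^k v$ solves the same transport equation ${\cal L}_b w=\partial_t^k \ff$ with initial datum $\partial_t^k v(\cdot,0)$ and boundary datum $d_t^k g$. The point of the compatibility conditions \eqref{eq:CCgenadv} is precisely to guarantee that the initial and boundary data for $w$ match up continuously at the corner $(x^-,0)$: using the equation one expresses $\partial_t^k v(\cdot,0)$ in terms of $h$ and $\ff$ via $\partial_t v = -{\cal M}_b v+\ff$, so $\partial_t^k v(x^-,0)$ equals the right-hand side of \eqref{eq:CCgenadv}, which is forced to coincide with $d_t^k g(0)$. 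Applying the $L^2$ theory (and the trace theory behind Theorem \ref{th:trace} for the hyperbolic spaces) to each $w=\partial_t^k v$ then gives $\partial_t^k v\in L^2_{x,t}$ and $\partial_t^k v(x^+,\cdot)\in L^2_t$ for $0\le k\le\gamma$; combined with the fact that the spatial derivative is recovered algebraically from the equation, $b\,\partial_x v=\ff-\partial_t v-\eta v$, one upgrades this to the full $H^\gamma$ regularity of $v$ and of its outflow trace.

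The energy estimate \eqref{eq:estgenadv} is where the main work lies, and I would obtain it by the standard multiplier technique applied to $w=\partial_t^k v$. Multiply ${\cal L}_b w=\partial_t^k\ff$ by $w$ and integrate over ${\cal O}\times(0,t)$. The term $\int w\,\partial_t w$ produces $\tfrac12\|w(\cdot,t)\|_{L^2_x}^2-\tfrac12\|w(\cdot,0)\|_{L^2_x}^2$; the advection term $\int b\,w\,\partial_x w=\tfrac{b}{2}\int\partial_x(w^2)$ integrates to boundary contributions $\tfrac{b}{2}(w^2(x^+,\cdot)-w^2(x^-,\cdot))$, and by the sign convention on $x^\pm$ the outflow boundary enters with a favorable sign $+\tfrac{|b|}{2}\|w(x^+,\cdot)\|_{L^2_t}^2$ while the inflow contributes $\tfrac{|b|}{2}\|d_t^k g\|_{L^2_t}^2$; the reaction term gives $\eta\|w\|_{L^2_{x,t}}^2$. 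Bounding the right-hand side $\int w\,\partial_t^k\ff$ by Young's inequality in the form $|\int w\,\partial_t^k\ff|\le \tfrac{\eta}{2}\|w\|_{L^2_{x,t}}^2+\tfrac{1}{2\eta}\|\partial_t^k\ff\|_{L^2_{x,t}}^2$ absorbs half the reaction term and produces the $\tfrac1\eta\|\partial_t^k\ff\|^2$ contribution. Collecting terms and dropping the nonnegative $\tfrac12\|w(\cdot,t)\|_{L^2_x}^2$ yields exactly \eqref{eq:estgenadv}. The anticipated obstacle is rigor rather than computation: the multiplier argument is formally a pairing of $w$ against $\partial_t^k\ff$ and requires enough regularity to justify the integration by parts and the evaluation of the boundary traces, so I would first establish the estimate for smooth data satisfying the compatibility conditions and then pass to the limit by density, using Theorem \ref{th:trace} to control the traces uniformly.
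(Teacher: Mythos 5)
Your proposal is correct, but note that the paper itself offers no proof of this theorem at all: it is stated as a known well-posedness result and attributed to the reference \cite{Metivier}, so there is no internal argument to compare against. What you supply is the standard self-contained proof, and it holds up: the characteristics/Duhamel construction does produce exactly \eqref{eq:addistr}, your reading of the compatibility conditions \eqref{eq:CCgenadv} as corner matching, $d_t^kg(0)=\partial_t^k v(x^-,0)$ with $\partial_t^k v(\cdot,0)$ computed by iterating $\partial_t v=-{\cal M}_b v+\ff$, is precisely their purpose, and your multiplier computation reproduces \eqref{eq:estgenadv} with the exact constants (Young's inequality with weight $\eta$ absorbs half the reaction term, the outflow trace carries the favorable sign, and dropping the terminal-time term and doubling gives the stated inequality). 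Two points remain at the level of a sketch rather than a proof, and you should be aware of them: uniqueness of the $L^2$ weak solution is asserted by appeal to "agreement along almost every characteristic," which for merely $L^2$ data genuinely requires an argument (e.g.\ a duality/transposition argument or a renormalization along characteristics), and the regularity claim $v\in H^\gamma({\cal O}\times(0,T))$ for non-integer $\gamma$ needs an interpolation step on top of your integer-order induction; your recovery of spatial derivatives from $b\,\partial_x v=\ff-\partial_t v-\eta v$ handles only integer orders. Since the paper delegates exactly these technicalities to \cite{Metivier}, your treatment is a reasonable, and more informative, substitute for the citation.
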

Similarly, we also use well-posedness results for the advection
reaction diffusion equation
\begin{equation}\label{eq:genadvdiff}
  \begin{array}{rcll}
  {\cal L}_{ad}u:=\displaystyle\partial_t u
      -\nu \partial_{x}^2u +
      a\partial_x u+cu & = & f  & 
      \mbox{in ${\cal O}\times(0,T)$},\\
   {\cal B}_1u(x_1,\cdot) & = & g_1\quad & \mbox{on $(0,T)$}, \\
   {\cal B}_2u(x_2,\cdot) & = & g_2 & \mbox{on $(0,T)$},\\
   u(x,0) & = & h & \mbox{in ${\cal O}$},
  \end{array}
\end{equation}
with boundary operators according to (\ref{eq:defbo}). We define
${\cal M}_{ad}$ to be the spatial part of the operator ${\cal
  L}_{ad}$, \textit{i.e.}  ${\cal L}_{ad}=\partial_t +{\cal M}_{ad}$.
\begin{theorem}\label{th:parabolic}
For $\gamma> 0$, let $f \in H^{2\gamma,\gamma}({\cal O}\times(0,T))$,
$g_1\in H_t^{\gamma+\frac{3}4}$, $g_2\in H_t^{\gamma+\frac{3}4}$ for
negative $a$, and $g_2\in H_t^{\gamma-\frac{1}4}$ for positive $a$, $h\in
H_x^{2\gamma+1}({\cal O})$, with the compatibility conditions for
$0\le k < \gamma-\frac{1}{2}$ and $0\le k' < \gamma-\frac{3}{2}$ for
negative $a$ given by
  \begin{align}\label{eq:ccadho}
1\le j \le 2, \quad  d_t^kg_j(0)=\ds
 (-{\cal M}_{ad})^k h(x_j)+ ( \sum_{j=0}^{k-1}(-{\cal M}_{ad})^j  \partial_t^{k-1-j} f)(x_j,0) ,
\\
\intertext{and for positive $a$ the second compatibility condition
  is replaced by }
  d_t^{k'} g_2 (0) -\nu   \ds
  \sum_{j=0}^{k'-1}(-{\cal M}_{ad})^j  \partial_t^{k'-1-j} \partial_x^2f(x_2,0)-\nu   d_{x}^2 (-{\cal M}_{ad})^{k'} h(x_2) = \partial_t^{k'} f(x_2,0) ,
  \end{align}
  then problem \eqref{eq:genadvdiff} has a unique solution $u$ in
  $H^{2(\gamma+1),\gamma+1}({\cal O} \times (0,T))$.
\end{theorem}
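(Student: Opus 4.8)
The plan is to recognize \eqref{eq:genadvdiff} as a uniformly parabolic boundary value problem (the principal part $-\nu\partial_x^2$ is elliptic since $\nu>0$) and to obtain the result as an instance of the isomorphism theorem for parabolic problems in anisotropic Sobolev spaces of Lions and Magenes \cite{Lions:1968:PAL}. Concretely, I would show that the map $u\mapsto({\cal L}_{ad}u,\,{\cal B}_1u(x_1,\cdot),\,{\cal B}_2u(x_2,\cdot),\,u(\cdot,0))$ is an isomorphism from $H^{2(\gamma+1),\gamma+1}({\cal O}\times(0,T))$ onto the subspace of the product of the corresponding data spaces cut out by the compatibility conditions. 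Existence and regularity then follow from surjectivity, and uniqueness either from injectivity or, independently, from the energy estimate obtained by testing the homogeneous equation with $u$ and using $\nu>0$, $c>0$. Three things must be verified: that the prescribed data regularity is exactly the trace regularity of $H^{2(\gamma+1),\gamma+1}$, that the boundary operators cover ${\cal L}_{ad}$ in the parabolic sense (Lopatinskii--Shapiro), and that \eqref{eq:ccadho} are precisely the corner compatibility relations.

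Setting $r=2(\gamma+1)$ and $s=\gamma+1$ (so $r/s=2$, the parabolic scaling), the trace exponents of Theorem \ref{th:trace} read $p_k=\frac{r}{s}(s-k-\frac12)=2\gamma+1-2k$ and $\mu_j=\frac{s}{r}(r-j-\frac12)=\gamma+\frac34-\frac{j}{2}$. The initial trace $u(\cdot,0)=h$ corresponds to $k=0$, giving $h\in H^{2\gamma+1}_x$; the Dirichlet traces ($j=0$) give $g_1,g_2\in H^{\gamma+3/4}_t$, which is the requirement in the case $a<0$ and for $g_1$ when $a>0$. For $a>0$ the datum is $g_2={\cal L}_au(x_2,\cdot)=(\partial_t+a\partial_x+c)u(x_2,\cdot)$; here $\partial_xu$ contributes the trace $\mu_1=\gamma+\frac14$ while $\partial_tu$, being one order less regular in the $H^{r,s}$ scale, contributes $\gamma-\frac14$, and the least regular summand dictates $g_2\in H^{\gamma-1/4}_t$. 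This accounts for all the data hypotheses, and $f={\cal L}_{ad}u\in H^{2\gamma,\gamma}$ follows directly from \eqref{eq:derSobanis}.

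The main obstacle is the covering condition, nontrivial only for the absorbing operator ${\cal B}_2=\partial_t+a\partial_x+c$ when $a>0$ (for Dirichlet it holds trivially). I would freeze coefficients at $x_2$, Laplace transform in $t$ with symbol $s$, $\mathrm{Re}\,s>0$, and study the model ODE $-\nu\,\partial_x^2\hat u+a\,\partial_x\hat u+(s+c)\hat u=0$ on the half line interior to ${\cal O}$. Its characteristic roots are $\lambda_\pm=\frac{a\pm\sqrt{a^2+4\nu(s+c)}}{2\nu}$, and the solution decaying into the domain is the one with $\mathrm{Re}\,\lambda_+>0$. Applying the boundary symbol gives $s+c+a\lambda_+=s+c+\frac{a^2+a\sqrt{a^2+4\nu(s+c)}}{2\nu}$, whose real part is strictly positive for $a,\nu,c>0$ and $\mathrm{Re}\,s\ge0$, $s\neq0$; hence the boundary operator is invertible on the stable subspace and the parabolic Lopatinskii--Shapiro condition holds.

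Finally, the compatibility relations \eqref{eq:ccadho} are obtained by iterating the equation at $t=0$: from $\partial_tu=-{\cal M}_{ad}u+f$ one gets $\partial_t^ku(\cdot,0)=(-{\cal M}_{ad})^kh+\sum_{j=0}^{k-1}(-{\cal M}_{ad})^j\partial_t^{k-1-j}f(\cdot,0)$, and evaluating at $x_j$ and equating with $d_t^kg_j(0)$ yields the stated Dirichlet conditions for $0\le k<\gamma-\frac12$. For the absorbing condition at $a>0$ one instead evaluates $d_t^{k'}({\cal B}_2u)(x_2,0)=({\cal B}_2\partial_t^{k'}u)(x_2,0)$ and replaces the leading $\partial_t$ by $-{\cal M}_{ad}+f$ through the equation; the $-\nu\partial_x^2$ part of ${\cal M}_{ad}$ then produces the $\partial_x^2f$ and $d_x^2(-{\cal M}_{ad})^{k'}h$ terms, giving the modified condition valid for $0\le k'<\gamma-\frac32$. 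Together with Theorem \ref{th:trace} these are exactly the constraints defining membership in the trace space $V^{r,s}$, so the data space is characterized and the Lions--Magenes isomorphism delivers the unique solution in $H^{2(\gamma+1),\gamma+1}$. The delicate points are the half-integer index bookkeeping and checking that each compatibility expression is well defined up to the stated threshold.
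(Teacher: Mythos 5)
Your component verifications are individually correct: the trace-exponent bookkeeping ($p_k=2\gamma+1-2k$, $\mu_j=\gamma+\frac34-\frac{j}{2}$, so that for $a>0$ the least regular term $\partial_t u$ in ${\cal L}_au(x_2,\cdot)$ dictates $g_2\in H_t^{\gamma-\frac14}$), the derivation of \eqref{eq:ccadho} by iterating $\partial_t u=-{\cal M}_{ad}u+f$ at $t=0$, and the positivity of $\mathrm{Re}(s+c+a\lambda_+)$, which by the characteristic equation equals $\mathrm{Re}(\nu\lambda_+^2)$. The gap is in the step carrying all the weight: the parabolic isomorphism theorem of \cite{Lions:1968:PAL} does not apply to the boundary operator ${\cal B}_2=\partial_t+a\partial_x+c$. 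In that theory the admissible boundary operators are differential operators in the \emph{space} variables only (with possibly $t$-dependent coefficients), of order at most $2m-1=1$ here; an operator containing $\partial_t$ is outside its scope, and you cannot repair this by using the equation on the boundary, since ${\cal B}_2u=g_2$ is then equivalent to $\nu\partial_x^2u(x_2,\cdot)=g_2-f(x_2,\cdot)$, a spatial operator of order $2$, which again violates the hypotheses. So precisely in the case $a>0$ --- the only case where an argument is needed, and the case your Lopatinskii--Shapiro analysis addresses --- the theorem you invoke does not exist in the cited reference. A correct version of your route does exist: Solonnikov's general theory of parabolic initial-boundary value problems admits boundary operators polynomial in both $\partial_t$ and $\partial_x$ under exactly the complementing condition you verify; but that is a different and heavier framework, in which the half-integer compatibility bookkeeping would have to be redone.

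This obstruction is exactly what the paper's proof is designed to circumvent, and it never leaves the Dirichlet theory of \cite{Lions:1968:PAL}. For $a<0$ the result is quoted directly. For $a>0$ the paper first obtains a weak solution variationally, then bootstraps regularity by reduction to the Dirichlet case: it solves an auxiliary problem with Dirichlet data $\tilde{g}_2\in H_t^{\gamma+\frac34}$ (and symmetric compatibility conditions), getting $\tilde{u}\in H^{2(\gamma+1),\gamma+1}$, and studies the correction $v=u-\tilde{u}$, which satisfies the homogeneous equation with absorbing datum $q_2=g_2-{\cal L}_a\tilde{u}\in H_t^{\gamma-\frac14}$. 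The key point, read off from the explicit Fourier-in-time representation \eqref{solv2}--\eqref{solv2reg}, is a gain of one full derivative at the boundary: $|\hat{v}(x_2,\omega)|\sim|\hat{q}_2(\omega)|/|\nu\lambda_+^2|\sim|\hat{q}_2(\omega)|/|\omega|$, hence $v(x_2,\cdot)\in H_t^{\gamma+\frac34}$, and $v$ is then itself the solution of a Dirichlet problem with admissible data. Note that your covering-condition computation is morally the same observation ($s+c+a\lambda_+=\nu\lambda_+^2\neq0$), but used only qualitatively; the paper needs, and proves, the quantitative decay in $\omega$. To complete your proof you must either replace the Lions--Magenes citation by Solonnikov's theorem, or insert this reduction-plus-Fourier argument in place of the direct appeal to an isomorphism theorem.
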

\begin{proof}
Existence and regularity results are well-known for Dirichlet boundary
conditions on both sides, see \cite{Lions:1968:PAL,Metivier}, so we do
not consider the case of negative advection further. In
\cite{Metivier} more precise results with error bounds in $\nu$ for
the hyperbolic equation (see Theorem \ref{th:parabolic2}) can be
found. In the case where $a>0$, due to the absorbing boundary, we need
to modify the proof on the right boundary, and we use a Fourier transform
in time.  A weak solution is obtained by a variational formulation,
like in \cite{martin:2005:osw,bennequin:2009:hba} for instance. The
regularity is obtained as follows: we first modify the boundary
condition in \eqref{eq:genadvdiff} on the right at $x=x_2$ to Dirichlet
data $\tilde{g}_2\in H_t^{\gamma+\frac{3}4}$. Because of the
compatibility conditions on the left, and imposing symmetric
compatibility conditions on $\tilde{g}_2$ on the right, there is a
unique solution $\tilde{u}\in H^{2(\gamma+1),(\gamma+1)}({\cal O} \times
(0,T))$, see \cite{Lions:1968:PAL}. The difference $v=u-\tilde{u}$ is
solution of the homogeneous case of equation \eqref{eqadvisqapos}, but
the boundary condition on the right becomes $ {\cal L}_a
(u-\tilde{u})=q_2:=g_2- {\cal L}_a \tilde{u}$. By the regularity
results above, $q_2$ is in $ H_t^{\gamma-\frac{1}4}$.  To estimate
$v$, we will make use of the Fourier transform. We extend all
functions by $0$ in $\RR_-$, and smoothly into $(T,+\infty)$, and
define
$$
  \hat{v}(\omega)=\frac1{2\pi}\int_\RR e^{-i\omega\,t} v(t) \,dt.
$$
Since the initial value vanishes, the equation is Fourier transformed
in time to
$$
  \widehat{{\cal L}}_{ad}\hat{v}:=-\nu  \partial_{x}^2\hat{v}
    +a \partial_x\hat{v} +(c+i\omega)\hat{v}=0 \mbox{ on ${\cal O}\times \CC$}.
$$ 
This is for each $\omega$ an ordinary differential equation, with
characteristic roots
\begin{equation}\label{deflambda}
  \lambda_+(\omega)=\frac{1}{2\nu}(a+\sqrt{a^2+4\nu (c+i\omega)}),\quad 
  \lambda_-(\omega)=\frac{1}{2\nu}(a-\sqrt{a^2+4\nu (c+i\omega)}),
\end{equation}
with $Re(\lambda_+)>0$ and $Re(\lambda_-)<0$. The general solution is
$$
\hat{v}(x,\omega) = \ell_+(\omega) e^{\lambda_+x}+ \ell_-(\omega) e^{\lambda_- x}.
$$ 
Using the boundary conditions, we then get the solution
\begin{equation}\label{solv2}
\huu(x,\omega)=
  \hhh_2(\omega)\,
  \frac{e^{\lambda_+ (x-x_1)}-e^{\lambda_-(x-x_1)}}
  {\nu \lambda_+^2e^{\lambda_+ (x_2-x_1)} -\nu \lambda_-^2e^{\lambda_- (x_2-x_1)}  },
\end{equation}
where we have used the relation $c+i \omega
+a\lambda_{\pm}=\nu(\lambda_{\pm})^2$.  The value at $x=x_2$ can be
equivalently written as
\begin{equation}\label{solv2reg}
  \hat{v}(x_2, \omega) = \hhh_2(\omega) 
    \frac{e^{-(\lambda_+-\lambda_-) (x_2-x_1)}-1}
    {\nu \lambda_+^2\left((\frac{\lambda_-}{\lambda_+})^2
    e^{-(\lambda_+-\lambda_-) (x_2-x_1)}-1\right)}.
\end{equation}
In order to estimate the regularity of $v(x_2,\cdot)$, we need to
estimate the multiplicative factor on the right for large $\omega
$. We can see from \eqref{deflambda} that $\lambda_+(\omega)\sim
- \lambda_-(\omega)\sim\sqrt{ \frac{i\omega}{\nu}}$. Therefore $
|\hat{v}(x_2, \omega)| \sim\left|\frac{ \hhh_2(\omega) }{\nu
  \lambda_+^2}\right| \sim\left|\frac{ \hhh_2(\omega)
}{\omega}\right|.  $ Since $q_2 \in H_t^{\gamma-\frac{1}4}$, we
conclude that $v(x_2,\cdot)\in H_t^{\gamma+\frac{3}4}$. Then $v$ is
solution of the advection-diffusion equation with Dirichlet boundary
conditions, and the data has sufficient regularity to conclude.
\end{proof}

\section{Properties of the factorization algorithm for positive advection}
\label{sec:wpalgo1}

We consider the advection-diffusion equation in $\Omega=(-L_1,L_2)$ with
Dirichlet boundary condition on the left, and absorbing boundary
condition given by the transport operator on the right (see
\cite{Halpern:1986:ABC}),
\begin{equation}\label{eqadvisqapos}
  \begin{array}{rcll}
  {\cal L}_{ad}u:=\partial_t u       -\nu \partial_{x} ^2u +
      a\partial_{x} u +cu & = & f  & 
      \mbox{in $\Omega\times(0,T)$},\\
   u(-L_1,\cdot) & = & g_1\quad & \mbox{on $(0,T)$}, \\
  {\cal L}_a u(L_2,\cdot) & = & g_2 & \mbox{on $(0,T)$},\\
   u(\cdot,0) & = & h & \mbox{in $\Omega$}.
  \end{array}
\end{equation}
We suppose in this section that $f$ and $g_1$ are compactly supported
in $(0,T]$, that $h$ is compactly supported in $\Omega_1=(-L_1,0)$,
and that for each $t$ the function $f(\cdot,t)$ is compactly supported
in $\Omega$. We further assume that the boundary condition at $L_2$ is
absorbing, that is $g_2=0$.  Therefore the compatibility conditions
are satisfied to any order on both ends of the interval $\Omega$, and
for $ f\in H^{\frac{9}2,\frac{9}4}(\Omega\times(0,T))$, $h\in
H_x^{\frac{11}2}$, and $g_1\in H_t^{3}$, $u$ is defined in
$H^{\frac{13}2,\frac{13}4} (\Omega\times(0,T))$.

\subsection{Well-posedness}\label{sec:defalgo}

The remainder ${\cal R}$ for computing $u_{ma}^k$ in the new factorization
algorithm (\ref{algoapos}) contains two time derivatives, which lead
to an important loss of regularity at each iteration. We will however
see that the error order in $\nu$ can not be improved further after
two iterations, and hence we only study the first two iterations in
detail. We start with the well-posedness of the algorithm.

Algorithm (\ref{algoapos}) starts with an initial guess $g_{ad}^0$ as
boundary condition for $u_a$. We assume that $g_{ad}^0\in H_t^{4}$
and is compactly supported in $(0,T]$. Using that $ f\in
H^{4}(\Omega_2\times(0,T))$, that $h$ vanishes in $\Omega_2$, and that
the compatibility conditions at $x=0$, $t=0$ are satisfied, the
solution of
$$
{\cal L}_{a}u_a^1 =f \mbox{ in } \Omega_2, \quad 
u_a^1(0,\cdot)=g_{ad}^0 , \quad 
u_a^1(\cdot,0)=0\ $$
satisfies $ u_a^1\in H^{4}(\Omega_2\times(0,T))$.

The right hand side for the modified advection equation in
(\ref{algoapos}) is then $f_{ma}^1=\frac{a^2}{\nu}f+\cR\,u _a^1 \in
H^{2}(\Omega_2\times(0,T))$, and solving 
$$
{\cal L}_{ma}u_{ma}^1 =f_{ma}^1 \mbox{ in } \Omega_2, \quad
u_{ma}^1(L_2,\cdot)=0, \quad
u_{ma} ^1(\cdot,0)= 0,
$$ 
the compatibility conditions at $x=L_2$ are again satisfied to any
order, which implies that $u_{ma}^1 \in H^{2}(\Omega_2\times(0,T))$
and $u_{ma} ^1(0,\cdot) \in H^{2}(0,T)$. The latter then becomes the
right boundary data for the advection diffusion problem in $\Omega_1$, 
$$
  {\cal L}_{ad}u_{ad}^1 =f \mbox{ in } \Omega_1, \quad 
    u_{ad}^1(-L_1,\cdot)=g_1, \quad 
    {\cal L}_{a}u_{ad}^1(0,\cdot)=u_{ma}^1(0,\cdot), \quad 
    u_{ad}^1(\cdot,0)=h.
$$
We have seen already that the compatibility conditions on the left are
satisfied, and on the right, at the corner $(0,0)$, with the
regularity of $u_{ma}^1$, the condition $u_{ma}^1(0,0)-\nu d_x^2h(0) =
f(0,0)$ holds, since both sides of this equality vanish.  Since $
f\in H^{\frac{9}2,\frac{9}4}(\Omega\times(0,T))$, $h\in
H^{\frac{11}2}_x$, $g_1\in H^{3}_t$, and $u_{ma}^1(0,\cdot)\in H^{2}_t$,
we obtain $ u_{ad}^1\in H^{\frac{13}2,\frac{13}4}
(\Omega_1\times(0,T))$ and $ u_{ad}^1(0,\cdot) \in H^{3}_t$, and 
at the corner $(0,0)$, we have for $g_{ad}^1:=u_{ad}^1(0,\cdot)$
\begin{equation}\label{CompProperty}
  \begin{aligned}
    g_{ad}^1 (0) =h(0),\quad
    d_tg_{ad}^1(0)+{\cal M}_{ad}h(0)&= f(0,0), \\
    d_t^2  g_{ad}^1 (0)-{\cal M}_{ad}^2h(0)&= \partial_tf(0,0)-{\cal M}_{ad}f(0,0).
  \end{aligned}
\end{equation}
We now start the second iteration with the computation of $u_a^2$,
using $u_a^1(0,\cdot)=g_{ad}^1=u_{ad}^1(0,\cdot)\in H^3_t$. Since $h$
is compactly supported in $\Omega_1$, ${\cal M}_{ad}^ph(0)={\cal
  M}_{a}^ph(0)=0$, and (\ref{CompProperty}) are
appropriate compatibility conditions to compute $u_a^2\in
H^{3}(\Omega_2\times(0,T))$. We define the new right hand side
$$
 f_{ma}^{2}=\frac{a^2}{\nu}f+\cR\,u _a^2 \in H^{1}(\Omega_2\times(0,T)),\quad
$$
and compute the solution of 
$$
{\cal L}_{ma}u_{ma}^2 =f_{ma}^2 \mbox{ in } \Omega_2, \quad
u_{ma}^2(L_2,\cdot)=0, \quad
u_{ma} ^2(\cdot,0)= 0.
$$
We thus obtain $ u_{ma}^2 \in H^{1}(\Omega_2\times(0,T)) $ and $u_{ma}^2
(0,\cdot) \in H^{1}(0,T)$.  
The last step of the second iteration is to compute $u_{ad}^2$
solution of
$$
  {\cal L}_{ad}u_{ad}^2 =f \mbox{ in } \Omega_1, \quad 
  u_{ad}^1(-L_1,\cdot)=g_1, \quad 
  {\cal L}_{a}u_{ad}^1(0,\cdot)=u_{ma}^2(0,\cdot), \quad 
  u_{ad}^1(\cdot,0)=h, 
$$ 
and we need only to satisfy a compatibility condition on the left,
which implies that $u_{ad}^2\in H^{\frac{9}2,\frac{9}4}
(\Omega\times(0,T))$.

\subsection{Error estimates for the factorization algorithm}
\label{subsec:errestim}

We present now asymptotic error estimates for small viscosity $\nu$
when $u$, the viscous solution of (\ref{eqadvisqapos}), is
approximated by $(u_{ad}^k,u_a^k)$, the solution obtained by our new
factorization algorithm (\ref{algoapos}).
We define the error quantities $e_a^k:=u_a^k-u$, $e_{ad}^k:=u_{ad}^k-u$, $e_{ma}^k:=u_{ma}^k-{\cal L}_a u:=u_{ma}^k-u_{ma}$,
and suppose that all our data is ${\cal
  C}^\infty$ in all variables.
 The error equations are 
\begin{equation}\label{errorequations}
 \left\{\begin{array}{l}
{\cal L}_{a}e_a^k = -\nu\partial_{x}^2 u \mbox{ in } \Omega_2,\\
e_a^k(0,\cdot)=e_{ad}^{k-1}(0,\cdot),\\
e_a^k(\cdot,0)=0,
\end{array}\right.\
\left\{\begin{array}{l}
{\cal L}_{ma}e_{ma} ^k= \cR\,e_a^k \mbox{ in } \Omega_2,\\
e_{ma}^k(L_2,\cdot)=0,\\
e_{ma}^k(\cdot,0)= 0,
\end{array}\right.\
\left\{\begin{array}{l}
{\cal L}_{ad}e_{ad}^k =0\mbox{ in } \Omega_1,\\
e_{ad}^k(-L_1,\cdot)=0,\\
{\cal L}_{a}e_{ad}^k(0,\cdot)=e_{ma}^k(0,\cdot),\\
e_{ad}^k(\cdot,0)=0,
\end{array}\right.
\end{equation}
with $e_{ad}^{0}(0,\cdot):= g_{ad}^{0}-u(0,\cdot)$. We need more precise
estimates than those provided by Theorems \ref{th:hyperbolic1} and
\ref{th:parabolic}. First, we state precisely the initial conditions
for all the equations involved:
the parabolic problems in $\Omega$ and $\Omega_1$ will use
$$
\partial_t^ku(\cdot,0)=(-{\cal M}_{ad})^k h\quad \mbox{and}\quad
\partial_t^ku_{ad}^p(\cdot,0)=(-{\cal M}_{ad})^k h,
$$
the forward hyperbolic problem in $\Omega_2$ uses
$$
\partial_t^ku_{a}^p(\cdot,0)=(-{\cal M}_{a})^k h=0,
$$
and the backward hyperbolic problem in $\Omega_2$ uses
$$
\partial_t^ku_{ma}^p(\cdot,0)=
\sum_{j=0}^{k-1}(-{\cal M}_{ma})^j
\partial_t^{k-1-j}f_{ma}^p(\cdot,0)
+(-{\cal M}_{ma})^k u_{ma,0}= 0.
$$
The solution of the exact backward hyperbolic problem in $\Omega_2$ 
has as initial condition
$$
\partial_t^ku_{ma}(\cdot,0)=
\sum_{j=0}^{k-1}(-{\cal M}_{ma})^j
\partial_t^{k-1-j}f_{ma}(\cdot,0)
+(-{\cal M}_{ma})^k u_{ma,0}= 0,
$$
from which we infer the initial values for the errors such that
\begin{align}\label{eq:errordttout} 
\partial_t^ke_{ad}^p(\cdot,0)=0 \mbox{ in $\Omega_1$},\quad
 \partial_t^ke_{a}^p(\cdot,0)=0\mbox{ and } \partial_t^ke_{ma}^p(\cdot,0)=0
\mbox{ in $\Omega_2$}.
\end{align}

We start with estimates for the solution of the advection-diffusion
equation \eqref{eqadvisqapos} with vanishing initial data and
vanishing boundary data $g_1$. A first lemma gives results for the
problem with $g_2=0$, based on energy estimates, and a second lemma
gives estimates where only the right-hand side $f$ is non-zero.
\begin{lemma}\label{lemma:estimeAD1}
  Suppose that $a>0$, and that $h$ vanishes identically in $\Omega$,
  $g_1$ and $g_2$ vanish on $(0,T)$, and that $f$ is in ${\cal
    C}^\infty_0(\Omega\times(0,T])$. Then there is a positive constant
    $C$ such that for any $\nu>0$, and any $k \le \gamma$, the
    solution $ u_1 $ of \eqref{eqadvisqapos} satisfies the estimates
\begin{align}\label{eq:adest2}
  \|\partial_t^k u_1 \|_{L^2_{x,t}}^2
   + \|\partial_t^k u_1(L_2,\cdot)\|_{L^2_{t}}^2
   +\nu\|\partial_t^k\partial_x u_1\|_{L^2_{x,t}}^2
   &\le C \| \partial_t^k f \|_{L^2_{x,t}}^2,\\
    \label{eq:adest3}
   \nu\|\partial_t^k\partial_x^2  u_1 \|_{L^2_{x,t}}
   &\le \| \partial_t^k f\|_{L^2_{x,t}}.
\end{align}
\end{lemma}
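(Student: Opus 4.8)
The plan is to reduce both inequalities to a single energy computation for the time-differentiated equation, the whole difficulty being the correct sign bookkeeping of the boundary terms at $x=L_2$, where the absorbing condition acts. First I would set $w:=\partial_t^k u_1$. Since ${\cal L}_{ad}$ has time-independent coefficients and ${\cal L}_a$ commutes with $\partial_t$, the function $w$ solves \eqref{eqadvisqapos} with right-hand side $\partial_t^k f$, homogeneous Dirichlet data $w(-L_1,\cdot)=0$, and homogeneous absorbing condition ${\cal L}_a w(L_2,\cdot)=0$. Because $h\equiv 0$ and $f$ is compactly supported in $(0,T]$, every initial value $\partial_t^j u_1(\cdot,0)$ — which by \eqref{eq:errordttout} is built from $h$ and the time derivatives of $f$ at $t=0$ — vanishes, so $w(\cdot,0)=0$ and $\partial_x w(\cdot,0)=0$. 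This kills all initial contributions when I integrate in time.

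For \eqref{eq:adest2} I would test the equation for $w$ against $w$ and integrate over $\Omega$. The diffusion term yields $\nu\|\partial_x w\|_{L^2_x}^2$ plus the endpoint term $-\nu\, w\,\partial_x w$; at $-L_1$ it vanishes by the Dirichlet condition, while at $L_2$ I substitute $\partial_x w(L_2)=-\tfrac1a(\partial_t w(L_2)+c\,w(L_2))$ from ${\cal L}_a w(L_2,\cdot)=0$, turning it into $\tfrac{\nu}{2a}\tfrac{d}{dt}w(L_2)^2+\tfrac{\nu c}{a}w(L_2)^2$. The advection term contributes $\tfrac a2 w(L_2)^2$ after an integration by parts, again using $w(-L_1)=0$. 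Integrating in time, the total time-derivative terms produce nonnegative endpoint values at $t=T$ (because $w(\cdot,0)=0$), and since $a,c>0$ all boundary terms at $L_2$ are nonnegative; bounding $\int(\partial_t^k f)\,w$ by Young's inequality to absorb $\tfrac c2\|w\|_{L^2_{x,t}}^2$, each nonnegative term on the left is then controlled by $\|\partial_t^k f\|_{L^2_{x,t}}^2$. The constant so obtained depends only on $a$ and $c$, not on $\nu$, which is precisely what \eqref{eq:adest2} asserts.

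For \eqref{eq:adest3} I would \emph{not} bound $\partial_x^2 w$ through the individual terms of the equation, since that loses the sharp constant. Instead, writing ${\cal L}_a=\partial_t+a\partial_x+c$ and $\partial_t^k f={\cal L}_a w-\nu\partial_x^2 w$, expand
\[
  \|\partial_t^k f\|_{L^2_{x,t}}^2
   =\|{\cal L}_a w\|_{L^2_{x,t}}^2
     -2\nu\,\langle {\cal L}_a w,\partial_x^2 w\rangle_{L^2_{x,t}}
     +\nu^2\|\partial_x^2 w\|_{L^2_{x,t}}^2 .
\]
The key is then to show $\langle {\cal L}_a w,\partial_x^2 w\rangle_{L^2_{x,t}}\le 0$. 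Integrating by parts in $x$ and in $t$, the interior terms $-\tfrac12\|\partial_x w(\cdot,T)\|_{L^2_x}^2$ and $-c\|\partial_x w\|_{L^2_{x,t}}^2$ are nonpositive, the two $w(L_2)\partial_x w(L_2)$ boundary terms cancel, and after inserting the absorbing condition the surviving boundary contributions reduce to $-\tfrac a2\int_0^T(\partial_x w(L_2))^2\,dt-\tfrac a2\int_0^T(\partial_x w(-L_1))^2\,dt\le 0$ since $a>0$. Hence $\|\partial_t^k f\|_{L^2_{x,t}}^2\ge \nu^2\|\partial_x^2 w\|_{L^2_{x,t}}^2$, which is exactly \eqref{eq:adest3} with constant one.

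The main obstacle is the careful treatment of the boundary terms at $x=L_2$: in both estimates the favorable signs hinge entirely on eliminating $\partial_x w(L_2)$ through the absorbing condition ${\cal L}_a w(L_2,\cdot)=0$ and on the positivity of $a$ and $c$. One must also check that the Dirichlet endpoint $-L_1$ contributes nothing and that all initial-time contributions drop because of the vanishing compatibility data. Getting these signs right is what yields the $\nu$-independent constant in \eqref{eq:adest2} and the sharp constant one in \eqref{eq:adest3}.
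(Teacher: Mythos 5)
Your proof is correct and follows essentially the same route as the paper's: for \eqref{eq:adest2} you perform the identical energy estimate (testing with $w=\partial_t^k u_1$, eliminating $\partial_x w(L_2,\cdot)$ through the absorbing condition, and absorbing the forcing term by Young's inequality), and for \eqref{eq:adest3} your completed-square identity $\|\partial_t^k f\|^2=\|{\cal L}_a w\|^2-2\nu\langle {\cal L}_a w,\partial_x^2 w\rangle+\nu^2\|\partial_x^2 w\|^2$ together with the sign claim $\langle {\cal L}_a w,\partial_x^2 w\rangle\le 0$ is just an algebraic reorganization of the paper's step of testing the equation against $-\partial_x^2 u_1$, relying on exactly the same integrations by parts and the same boundary sign analysis at $-L_1$ and $L_2$. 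Both versions yield the $\nu$-independent constant in \eqref{eq:adest2} and the constant-one bound in \eqref{eq:adest3}.
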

\begin{proof}
  Since the compatibility conditions are satisfied, $u_1$ is in
  $H^{\infty}$, and the initial value of $\partial_t^k u_1$ vanishes
  as well. We start with $k=0$:  multiplying the equation by $u_1$ and
  integrating over $\Omega$, taking into account that $u_1$ vanishes at
  $-L_1$ gives
  \begin{align}\nonumber
    \frac12 \frac{d\,}{dt} \|u_1(\cdot,t)\|^2_{L^2_x}&+
      c\|u_1(\cdot,t)\|^2_{L^2_x}+\nu
      \|\partial_xu_1(\cdot,t)\|^2_{L^2_x}\\\nonumber
      &+\frac{a}{2}u_1^2(L_2,t)-\nu (u_1\partial_xu_1)(L_2,t)
      =\int_\Omega f(x,t)u_1(x,t) dx.
  \end{align}
  Using the boundary condition at $L_2$ yields
  \begin{align}\nonumber
    \frac12 \frac{d\,}{dt}( \|u_1(\cdot,t)\|^2_{L^2_x}&+\frac{\nu}{a} u_1^2(L_2,t))
    +c\|u_1(\cdot,t)\|^2_{L^2_x}+\nu
    \|\partial_xu_1(\cdot,t)\|^2_{L^2_x}\\\nonumber
    &+(\frac{a}{2}+\frac{\nu c}{a})u_1^2(L_2,t) =
    \int_\Omega f(x,t)u_1(x,t) dx,
  \end{align}
  and by Cauchy-Schwarz and Young's inequality we obtain
  $$
    \int_\Omega f(x,t)u_1(x,t) dx\le 
    \|u_1(\cdot,t)\|_{L^2_x}\,\|f(\cdot,t)\|_{L^2_x}
    \le \frac{c}{2} \|u_1(\cdot,t)\|^2_{L^2_x} +\frac{1}{2c} 
    \|f(\cdot,t)\|^2_{L^2_x}.
  $$
  Integrating in time over $(0,T)$, and dropping the first term which
  is positive, we obtain, since the initial data vanishes, the
  inequality
  $$
    c \|u_1\|_{L^2_{x,t}}^2+2\nu\|\partial_xu_1 \|_{L^2_{x,t}}^2
    +a \|u_1(L_2,\cdot) \|_{L^2_{t}}^2\le \frac{1}{c} \|f\|_{L^2_{x,t}}^2,
  $$
  which proves \eqref{eq:adest2} for $k=0$. To prove
  \eqref{eq:adest3}, we multiply the equation by $-\partial_x^2 u_1$
  and integrate in $x$,
  \begin{align}\nonumber  
    \nu\|\partial_x^2 u_1(\cdot,t)\|^2_{L^2_x}
    -(\partial_t u_1(\cdot,t), \partial_x^2 u_1(\cdot,t))
    &-a(\partial_x u_1(\cdot,t), \partial_x^2 u_1(\cdot,t))\\\nonumber
    &-c( u_1(\cdot,t), \partial_x^2 u_1(\cdot,t))
    =-(f(\cdot,t), \partial_x^2 u_1(\cdot,t)).
  \end{align}
  An integration by parts leads to 
  \begin{align}\nonumber  
    \nu\|\partial_x^2 u_1(\cdot,t)\|^2_{L^2_x}
    +\frac12\frac{d\,}{dt}\|\partial_x u_1(\cdot,t)\|^2_{L^2_x}
    &+c\|\partial_x u_1(\cdot,t)\|^2_{L^2_x}
    -[\partial_x u_1(\cdot,t)(\partial_t u_1 + \frac{a}2 \partial_x
      u_1 \\\nonumber 
    &+c u_1)(\cdot,t)]_{-L_1}^{L_2}=
    -(f(\cdot,t), \partial_x^2 u_1(\cdot,t)).
  \end{align}
  By the boundary conditions, the boundary terms become
  $$
    a(\partial_x u_1)^2(-L_1,t)+ \frac1{a}(\partial_t u_1+cu_1)^2(L_2,t) > 0.
  $$
  We can now integrate in time and use Cauchy-Schwarz and Young's
  inequality to obtain
  $$
    \nu\|\partial_x^2 u_1 \|^2_{L^2_{x,t}} \le \frac1\nu\|f\|^2_{L^2_{x,t}}.
  $$
  The estimates with the time derivative are obtained by applying the
  equation to $\partial_t^k u$.
\end{proof}

\begin{lemma}\label{lemma:estimeAD2}
Assume that $a>0$. Then there are constants $\bar{\nu}>0$ and $C >0$ such that for $\nu \le
\bar{\nu}$, and for any $g_2\in {\cal C}^\infty_0((0,T])$, the solution $
  u_2 $ of \eqref{eqadvisqapos} with zero data $h$, $f$ and
  $g_1$ satisfies for all $k \le \gamma $ the inequalities
  \begin{align}\label{resADbord}
    \forall x \in [-L_1,L_2],  \ \|\partial_t^k  u_2(x,\cdot)\|_{L^2_t}\le C\nu 
     \| g_2\|_{H^k _t},\\\nonumber
    \|\partial_t^k  u_2\|_{L^2_{x,t}}\le  C\nu^\frac32
      \| g_2\|_{H^k _t},\quad 
    \|\partial_x\partial_t^k  u_2\|_{L^2_{x,t}}\le
      C\nu^\frac12 \| g_2\|_{H^k _t},\\ \nonumber
    \|\partial_x^2\partial_t^k  u_2\|_{L^2_{x,t}} \le
      C\nu^{-\frac12} \| g_2\|_{H^k _t}.
  \end{align}
\end{lemma}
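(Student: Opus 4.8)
The plan is to represent $u_2$ explicitly through a Fourier transform in time and to reduce all four estimates in \eqref{resADbord} to a single pointwise-in-frequency bound on the solution multiplier. Since $u_2$ solves \eqref{eqadvisqapos} with $h=f=g_1=0$ and only $g_2$ non-zero, it is precisely the function $v$ built in the proof of Theorem \ref{th:parabolic}: extending $u_2$ by zero for $t<0$ (legitimate since the data are supported in $(0,T]$) and smoothly past $t=T$, its transform $\hat{u}_2(x,\omega)$ is given by \eqref{solv2} with $\hat{q}_2$ replaced by $\hat{g}_2$, $x_1=-L_1$, $x_2=L_2$, and $\lambda_\pm(\omega)$ as in \eqref{deflambda}. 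By Plancherel's theorem, $\|\partial_t^k u_2(x,\cdot)\|_{L^2_t}^2=\int_\RR|\omega|^{2k}|\hat{u}_2(x,\omega)|^2\,d\omega$ and $\|\partial_x^j\partial_t^k u_2\|_{L^2_{x,t}}^2=\int_{-L_1}^{L_2}\int_\RR|\omega|^{2k}|\partial_x^j\hat{u}_2(x,\omega)|^2\,d\omega\,dx$, so it suffices to control $|\partial_x^j\hat{u}_2(x,\omega)|$ for $j=0,1,2$ uniformly in $\omega$, while retaining the decay in $x$.

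The key step is a sharp bound on the multiplier. Writing $d:=L_1+L_2$ and $s:=\sqrt{a^2+4\nu(c+i\omega)}$ with $\mathrm{Re}\,s>0$, so that $\lambda_\pm=(a\pm s)/(2\nu)$ and $\nu\lambda_+^2=(a+s)^2/(4\nu)$, I would factor $e^{\lambda_+d}$ out of the denominator of \eqref{solv2} and rewrite it as $\nu\lambda_+^2\bigl(1-(\lambda_-/\lambda_+)^2 e^{(\lambda_--\lambda_+)d}\bigr)$. Since $\mathrm{Re}(a^2+4\nu(c+i\omega))=a^2+4\nu c>a^2$, one has $\mathrm{Re}\,s\ge a$ and $|a+s|\ge 2a$, and moreover $|a-s|<|a+s|$ (because $|a+s|^2-|a-s|^2=4a\,\mathrm{Re}\,s>0$); hence $|(\lambda_-/\lambda_+)^2|\le 1$, $|\lambda_-|\le|\lambda_+|$, and $|e^{(\lambda_--\lambda_+)d}|=e^{-\mathrm{Re}(s)\,d/\nu}\le e^{-ad/\nu}$. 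Choosing $\bar\nu$ so that $e^{-ad/\bar\nu}\le\frac12$ makes the bracket bounded below by $\frac12$ for all $\nu\le\bar\nu$, which is exactly where the smallness assumption enters. Bounding the two exponentials in the numerator (the sub-dominant one is dominated by the first after using $\mathrm{Re}\,\lambda_-\le0$ and $x+L_1\ge0$) then yields, uniformly in $x\in[-L_1,L_2]$ and $\omega$,
\begin{equation*}
  |\hat{u}_2(x,\omega)|\le \frac{C\nu}{|a+s|^2}\,e^{\mathrm{Re}(\lambda_+)(x-L_2)}\,|\hat{g}_2(\omega)|,\qquad \mathrm{Re}(\lambda_+)=\frac{a+\mathrm{Re}\,s}{2\nu}\ge\frac{a}{\nu}.
\end{equation*}
The decisive point is that, by keeping $s$ exact, the single chain $\mathrm{Re}\,s\ge a$, $|a+s|\ge 2a$ covers both the boundary-layer frequencies $|\omega|\lesssim a^2/\nu$ (where $\lambda_+\sim a/\nu$) and the high frequencies $|\omega|\gtrsim a^2/\nu$ (where $\lambda_+\sim\sqrt{i\omega/\nu}$) with no case distinction.

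From here the four estimates follow by elementary integration. For the pointwise-in-$x$ bound I drop the exponential ($\le1$) and use $|a+s|^2\ge4a^2$ to obtain $|\hat{u}_2(x,\omega)|\le C\nu|\hat{g}_2(\omega)|$, and Plancherel gives the first line of \eqref{resADbord}. For the space-time norms, differentiating \eqref{solv2} in $x$ brings down powers of $\lambda_\pm$, the dominant contribution carrying $|\lambda_+|^j$; since $|\lambda_+|=|a+s|/(2\nu)$, one has $|\lambda_+|^j\,\nu/|a+s|^2=2^{-j}|a+s|^{j-2}\nu^{1-j}\le C\nu^{1-j}$ for $j\le2$ (using $|a+s|\ge2a$), so that $|\partial_x^j\hat{u}_2(x,\omega)|\le C\nu^{1-j}e^{\mathrm{Re}(\lambda_+)(x-L_2)}|\hat{g}_2(\omega)|$. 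Integrating $e^{2\mathrm{Re}(\lambda_+)(x-L_2)}$ over $x\in[-L_1,L_2]$ produces the factor $1/(2\mathrm{Re}(\lambda_+))\le\nu/(2a)$, which supplies the extra half power of $\nu$; combined with Plancherel this gives exactly $\nu^{3/2}$ for $j=0$, $\nu^{1/2}$ for $j=1$, and $\nu^{-1/2}$ for $j=2$, matching \eqref{resADbord}. The time derivatives are handled by carrying the harmless factor $|\omega|^{2k}$ through Plancherel, which turns $\|\hat{g}_2\|_{L^2_t}$ into $\|g_2\|_{H^k_t}$.

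The main obstacle is the uniform multiplier bound of the second paragraph rather than the integrations of the third. One must verify that the lower bound on the denominator holds uniformly in $\omega$ (secured by $\nu\le\bar\nu$), and that neither the sub-dominant numerator exponential nor the $\lambda_-^j$ terms arising from $\partial_x^j$ spoil the $\lambda_+$-driven scaling; the inequalities $\mathrm{Re}\,s\ge a$, $|a+s|\ge2a$, and $|\lambda_-|\le|\lambda_+|$ are what make this possible without splitting the frequency range.
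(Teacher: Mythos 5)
Your proposal is correct and follows essentially the same route as the paper's own proof: Fourier transform in time, the explicit solution formula \eqref{solv2} rewritten with the factor $e^{\lambda_+(x-L_2)}$ pulled out, a uniform-in-$\omega$ lower bound on the denominator bracket secured by taking $\nu$ small enough that $e^{-a(L_1+L_2)/\nu}\le 1/2$, and then integration of $e^{2\mathrm{Re}(\lambda_+)(x-L_2)}$ in $x$ to gain the extra half power of $\nu$ for the space-time norms. Your bookkeeping via $s=\sqrt{a^2+4\nu(c+i\omega)}$ (with $\mathrm{Re}\,s\ge a$, $|a+s|\ge 2a$, $|a-s|<|a+s|$) is just a compact repackaging of the paper's estimates $|\lambda_-/\lambda_+|<1$, $\mathrm{Re}(\lambda_+-\lambda_-)\ge a/\nu$, $|\lambda_+|\ge a/\nu$.
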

\begin{proof}
We use a Fourier transform argument as in the proof of Theorem
\ref{th:parabolic}, and rewrite \eqref{solv2} as
\begin{equation}\label{solv2b} 
  \hu_2(x,\omega)= \hg_2(\omega)\,e^{\lambda_+ (x-L_2)} \frac{e^{-(\lambda_+-\lambda_-)
      (x+L_1)}-1}{\nu
    \lambda_+^2\left((\frac{\lambda_-}{\lambda_+})^2e^{-(\lambda_+-\lambda_-)
      (L_2+L_1)}-1\right)}.
\end{equation}
Now we search for estimates in $\nu$ that are uniform in $\omega$. We
have for the roots $\lambda_\pm$ the estimates
$$
  |\lambda_-/\lambda_+|<1,\quad Re(\lambda_+-\lambda_-)\,\ge a/\nu,\quad
  |\lambda_+|\,\ge a/\nu.
$$
The numerator in \eqref{solv2b} is bounded by 2. A lower bound for the
denominator is obtained by writing $| \nu \lambda_+^2| \ge
\frac{a^2}{\nu\;}$ together with
$$
   \left|1-\left(\frac{\lambda_-}{\lambda_+} \right)^2
     e^{-(\lambda_+-\lambda_-) (L_2+L_1)}\right| 
     \ge  1-\left|\frac{\lambda_-}{\lambda_+} \right |^2
     e^{-Re(\lambda_+-\lambda_-) (L_2+L_1)}\ \ge\ 1-e^{-\frac{a}{\nu}(L_2+L_1)}.
$$
Inserting these estimates into \eqref{solv2b} gives
$$
  |\hu_2(x,\omega)|\le 
  \frac{2\nu}{a^2}| \hg_2(\omega)| e^{Re \lambda_+ (x-L_2)}\ 
  \ \frac{1}{1-e^{-\frac{a}{\nu}(L_2+L_1)}}\ .
$$
Since $1/|1-\mu|<2$ for $\mu< 1/2$, for $\nu$ sufficiently small so
that $e^{-\frac{a}{\nu}(L_2+L_1)} < 1/2$, we have for any $\omega$,
\begin{equation}\label{eq:est24}
  |\hu_2(x,\omega)|\le \frac{4\nu}{a^2} \ | \hg_2(\omega)|e^{Re \lambda_+ (x-L_2)}
  \le C \nu \ | \hg_2(\omega)|.
\end{equation}
By Parseval's identity, we obtain  
\begin{equation}\label{eq:est25}
  \| u_2(x,\cdot)\|_{L^2(\RR_+)} \le C \nu \ \| g_2\|_{L^2(\RR_+)}.
\end{equation}
Modifying now $ g_2$ to vanish in $[T+\epsilon, \infty)$, the solution
in $(0,T)$ remains unaffected by causality and for any positive
$\epsilon$,
$$
  \| u_2(x,\cdot)\|_{L^2(0,T)} \le C \nu \ \|g_2\|_{L^2(0,T+\epsilon)}.
$$
Since $\epsilon$ is arbitrary, we conclude that 
$$
  \| u_2(x,\cdot)\|_{L^2(0,T)} \le C \nu \ \|g_2\|_{L^2(0,T)}.
$$
From \eqref{eq:est24} we also obtain for all $\omega$
$$
  \|\hu_2(\cdot,\omega)\|_{L^2_x}^2\le \frac{C\nu^2}{Re \lambda_+} 
  \ | \hg_2(\omega)|^2\le C \nu^3 \ | \hg_2(\omega)|^2.
$$
We thus obtain
$$
  \| u_2\|_{L^2_{x,t}} \le C \nu^\frac32 \ \| g_2\|_{L^2_t}.
$$
For the derivative in space, we compute
$$
 \begin{array}{rcl}
   \partial_x\hu_2(x,\omega)&=&  \hg_2(\omega) \frac{\lambda_+e^{\lambda_+ (x+L_1)}-\lambda_-e^{\lambda_- (x+L_1)}}{\nu \lambda_+^2e^{\lambda_+( L_2+L_1)}-\nu \lambda_-^2e^{\lambda_- ( L_2+L_1)}} \\
 &=&
   \hg_2(\omega)\,e^{\lambda_+ (x-L_2)}
  \frac{\frac{\lambda_-}{\lambda_+}e^{-(\lambda_+-\lambda_-) (x+L_1)}-1}{\nu \lambda_+\left((\frac{\lambda_-}{\lambda_+})^2e^{-(\lambda_+-\lambda_-) (L_2+L_1)}-1\right)}.
\end{array}
$$
For small $\nu$, we therefore get as before
$$
  |\partial_x\hu_2(x,\omega)|\le 
  \frac4{\nu |\lambda_+|} | \hg_2(\omega)|\,e^{Re\lambda_+ (x-L_2)}\le 
  \frac4{a}  | \hg_2(\omega)|\,e^{Re\lambda_+ (x-L_2)},
$$
which gives $\|\partial_x \hu_2\|_{L^2(\Omega\times\RR)}^2\le C \nu \|
\hg_2 \|_{L^2(\RR)}^2$, and using the same arguments as before,
$$
 \|\partial_x u_2\|_{L^2_{x,t}}\le C  \nu^\frac12 \| g_2 \|_{L^2_{t}}.
$$
In the same way we find that
$$
 \|\partial_x^2 u_2\|_{L^2_{x,t}}\le C  \nu^{-\frac12} \| g_2 \|_{L^2_{t}}.
$$
\end{proof}
\begin{theorem}\label{th:parabolic2}
Let $a>0$. Then there exist positive constants $C$ and $\bar{\nu}$
such that for any $\nu\le\bar{\nu}$, and for any set of data $h\in
{\cal C}^\infty_0(\Omega)$, $f\in {\cal
  C}^\infty_0(\Omega\times(0,T])$, $g_1\in {\cal C}^\infty_0((0,T])$
  and $g_2\equiv 0$, if $U$ is the solution of the transport equation
\begin{equation}\label{eq:transportomega}
{\cal L}_a U = f \mbox{ in } \Omega\times(0,T), \quad
U(\cdot,0)=h \quad
U( -L_1 ,\cdot)=g_1,
\end{equation} 
then the solution $u$ of the advection-diffusion equation
(\ref{eqadvisqapos}) satisfies the estimate
\begin{align}\label{eq:estimates_for_uU}
   \|\partial_t^k (u-U) \|_{L^2_{x,t}}^2
   &+  \|\partial_t^k  (u-U)(L_2,\cdot)\|_{L^2_{t}}^2
   + \nu \|\partial_t^k\partial_x  (u-U)\|_{L^2_{x,t}}^2 \\\nonumber
   &+ \nu^2\|\partial_t^k\partial_x^2 \,(u-U)\|_{L^2_{x,t}}^2
  \le C  \nu^2 
          \| \partial_t^k  \partial_{x}^2 U \|_{L^2_{x,t}}^2.
\end{align}
Hence $u$ also satisfies the estimate
\begin{align}\label{eq:estimates_for_u}
  \|\partial_t^k u \|_{L^2_{x,t}}^2&+\|\partial_t^k  u(L_2,\cdot)\|_{L^2_{t}}^2
  +  \|\partial_t^k\partial_x  u\|_{L^2_{x,t}}^2\\\nonumber
  &+ \|\partial_t^k\partial_x^2  u\|_{L^2_{x,t}}^2 
  \le C (\| f\|_{H^{k+2}_{x,t}}^2+ \|h\|_{H^{k+2}_x}^2+ \|g_1\|_{H^{k+2}_ {t} }^2).
\end{align}
 \end{theorem}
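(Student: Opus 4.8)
The guiding idea is to analyze the difference $w := u - U$, for which the viscous term becomes an explicit, small forcing. Since ${\cal L}_{ad} = {\cal L}_a - \nu\partial_x^2$ and ${\cal L}_a U = f$, I would first compute
\[
{\cal L}_{ad} w = {\cal L}_{ad}u - {\cal L}_{ad}U = f - \left({\cal L}_a U - \nu\partial_x^2 U\right) = \nu\,\partial_x^2 U .
\]
The next check is that $w$ carries homogeneous data: $w(\cdot,0) = h - h = 0$, $w(-L_1,\cdot) = g_1 - g_1 = 0$, and on the right ${\cal L}_a w(L_2,\cdot) = g_2 - {\cal L}_a U(L_2,\cdot) = 0 - f(L_2,\cdot) = 0$, where I use $g_2\equiv 0$ together with the compact support of $f(\cdot,t)$ in $\Omega$, which forces $f(L_2,\cdot)=0$. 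Thus $w$ solves exactly problem \eqref{eqadvisqapos} with vanishing $h$, $g_1$, $g_2$ and right-hand side $\nu\partial_x^2 U$, i.e. the situation of Lemma \ref{lemma:estimeAD1}.

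The second step is to feed the source $\nu\partial_x^2 U$ into the energy estimates \eqref{eq:adest2}--\eqref{eq:adest3}. For $k=0$ the conclusion is immediate, since only $w(\cdot,0)=0$ (and hence $\partial_x w(\cdot,0)=0$) is used: \eqref{eq:adest2} yields $\|w\|_{L^2_{x,t}}^2 + \|w(L_2,\cdot)\|_{L^2_t}^2 + \nu\|\partial_x w\|_{L^2_{x,t}}^2 \le C\nu^2\|\partial_x^2 U\|_{L^2_{x,t}}^2$, while \eqref{eq:adest3} gives $\nu\|\partial_x^2 w\|_{L^2_{x,t}} \le \nu\|\partial_x^2 U\|_{L^2_{x,t}}$; adding these produces \eqref{eq:estimates_for_uU} for $k=0$. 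For $k\ge 1$ I would differentiate the $w$-equation $k$ times in time. The boundary relations hold for every $t$, so $\partial_t^k w$ still satisfies homogeneous Dirichlet data at $-L_1$ and the absorbing condition ${\cal L}_a(\partial_t^k w)(L_2,\cdot)=0$; but the initial value is no longer trivial. From $\partial_t w = -{\cal M}_{ad}w + \nu\partial_x^2 U$ and $w(\cdot,0)=0$ one obtains, by the same recursion that underlies the compatibility conditions \eqref{eq:CCgenadv},
\[
\partial_t^k w(\cdot,0) = \nu\sum_{j=0}^{k-1}(-{\cal M}_{ad})^j\,\partial_x^2\,(-{\cal M}_a)^{k-1-j}h .
\]
I would then re-run the energy argument behind Lemma \ref{lemma:estimeAD1} on $v=\partial_t^k w$, retaining the term $\tfrac12\|\partial_t^k w(\cdot,0)\|_{L^2_x}^2$ that now appears on the right.

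The main obstacle is precisely the clean treatment of this initial contribution. It carries an explicit factor $\nu$, hence is $O(\nu^2)$ in the squared energy, which is the correct order; to fit it under the target bound one must exploit that the displayed sum is built from second space derivatives of the transport solution at $t=0$. Indeed $\partial_x^2(-{\cal M}_a)^{k-1-j}h = \partial_t^{k-1-j}\partial_x^2 U(\cdot,0)$, so $\partial_t^k w(\cdot,0)$ is, up to the harmless factors $(-{\cal M}_{ad})^j$, a trace at $t=0$ of time derivatives of $\nu\partial_x^2 U$; a time-trace (or interpolation) inequality then bounds $\|\partial_t^k w(\cdot,0)\|_{L^2_x}$ by $\nu$ times the space-time norms of $\partial_t^m\partial_x^2 U$ for $m\le k$, which is of the required order $\nu^2$ and, for the subsequent estimate \eqref{eq:estimates_for_u}, fully controlled by the data. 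This justifies absorbing the initial term and establishes \eqref{eq:estimates_for_uU} for all $k\le\gamma$.

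Finally, to pass from \eqref{eq:estimates_for_uU} to \eqref{eq:estimates_for_u} I would write $u = w + U$ and apply the triangle inequality term by term. Estimate \eqref{eq:estimates_for_uU} bounds each $w$-term by $\nu\|\partial_t^k\partial_x^2 U\|$ (and $\|\partial_t^k\partial_x^2 w\|$ by $\|\partial_t^k\partial_x^2 U\|$), so it only remains to control $\|\partial_t^k U\|$, $\|\partial_t^k U(L_2,\cdot)\|$, $\|\partial_t^k\partial_x U\|$ and $\|\partial_t^k\partial_x^2 U\|$ by the data. These follow from the transport estimate \eqref{eq:estgenadv} of Theorem \ref{th:hyperbolic1} applied to $U$ and to its first two space derivatives, each of which again solves a transport equation driven by $\partial_x f$, $\partial_x^2 f$ and the corresponding traces of $h$ and $g_1$. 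Since two space derivatives of the data are consumed in this way, the bound appears naturally in the norms $\|f\|_{H^{k+2}_{x,t}}$, $\|h\|_{H^{k+2}_x}$ and $\|g_1\|_{H^{k+2}_t}$, which gives \eqref{eq:estimates_for_u}.
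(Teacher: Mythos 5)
Your proof follows the same route as the paper's: the identical decomposition $w=u-U$, the same verification that $w$ solves \eqref{eqadvisqapos} with forcing $\nu\partial_x^2U$ and homogeneous data (including ${\cal L}_a w(L_2,\cdot)=-f(L_2,\cdot)=0$ from the compact support of $f$), the same appeal to Lemma \ref{lemma:estimeAD1}, and the same triangle-inequality conclusion. Two points differ. First, to pass to \eqref{eq:estimates_for_u} the paper does not differentiate the transport problem in space as you do; it uses the algebraic identities $a\partial_xU=-(\partial_t+c)U+f$ and $a^2\partial_x^2U=(\partial_t+c)^2U+(a\partial_x-(\partial_t+c))f$ to convert space derivatives of $U$ into time derivatives of $U$ and derivatives of $f$, which avoids supplying boundary and initial traces for differentiated transport equations; both routes work, the paper's being slightly cleaner. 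Second, and more substantively, you flag that $\partial_t^k w(\cdot,0)\neq 0$ for $k\ge 1$ when $h\not\equiv 0$ (indeed $\partial_t w(\cdot,0)=\nu\, d_x^2 h$, because the forcing $\nu\partial_x^2 U$ does not vanish at $t=0$): this is a genuine subtlety that the paper's proof passes over in silence, since it asserts the estimates ``follow directly'' from Lemma \ref{lemma:estimeAD1}, whose own proof uses that all time derivatives of the solution vanish initially. Your instinct to re-run the energy argument keeping the term $\|\partial_t^k w(\cdot,0)\|^2_{L^2_x}$ is the right repair. One caveat: your claim that the factors $(-{\cal M}_{ad})^j$ are harmless is too quick, because ${\cal M}_{ad}$ contains $a\partial_x$ and $\nu\partial_x^2$; hence $\partial_t^k w(\cdot,0)$ involves traces $\partial_x^{2+s}\partial_t^m U(\cdot,0)$ with $s>0$ (equivalently, derivatives of $h$ of order up to $2k$, weighted by powers of $\nu$), not only the quantities $\partial_t^m\partial_x^2 U(\cdot,0)$. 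Consequently what your argument actually yields for $k\ge 1$ is \eqref{eq:estimates_for_uU} with additional, still $O(\nu^2)$, data-dependent terms on the right-hand side; this suffices for the way the theorem is used later, but it is not literally the stated inequality --- a rough edge your attempt shares with, and at least makes visible in, the published proof.
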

\begin{proof}
  Since the data is compactly supported, the compatibility conditions
  are automatically satisfied, so that the solutions of the parabolic
  and hyperbolic equations are in ${\cal
  C}^\infty_0(\Omega\times(0,T])$. The estimates
  \eqref{eq:estimates_for_uU} follow directly from Lemma
  \ref{lemma:estimeAD1}, using that $u-U$ is solution of the
  advection-diffusion equation in $\Omega$ with right hand side $\nu
  \partial_{x}^2 U$, and zero initial and boundary conditions on the
  left.  The boundary condition on the right also vanishes, ${\cal
    L}_a(u-U) = -f(L_2,\cdot)=0$, since $f$ is compactly supported in
  $\Omega$.  We define now $B_k= \| f\|_{H^{k}_{x,t}}^2+
  \|h\|_{H^{k}_{x}}^2+ \|g_1\|_{H^{k}_{t}}^2$, and use for $U$ the
  hyperbolic estimates (\ref{eq:estgenadv}) in ${\cal O}=\Omega$,
  $$
    \|\partial_t^k U \|_{L^2_{x,t}}^2 + \|\partial_t^k  U(L_2,\cdot)\|_{L^2_{t}}^2
      \le CB_k.
  $$
  Next, from the advection equation, we deduce that
  $$
    a\partial_xU=-(\partial_t+c)U+f,\quad \mbox{and}\quad
    a^2\partial_x^2U=(\partial_t+c)^2U+(a\partial_x-(\partial_t+c))f,
  $$
  so that
  $$
    \|\partial_t^k\partial_xU\|^2_{L^2_{x,t}} \le C B_{k+1}, \quad
    \|\partial_t^k\partial_x^2U\|^2_{L^2_{x,t}} \le C B_{k+2}, 
  $$
  and from \eqref{eq:estimates_for_uU} we obtain
  \begin{align}\nonumber
    &\|\partial_t^k (u -U)\|_{L^2_{x,t}}^2+\|\partial_t^k  (u-U)(L_2,\cdot)\|_{L^2_{t}}^2
      \le C\nu^2 B_{k+2},\\\nonumber
     &  \|\partial_t^k\partial_x  (u-U)\|_{L^2_{x,t}}^2
      \le C \nu B_{k+2},\quad \|\partial_t^k\partial_x^2  (u-U)\|_{L^2_{x,t}}^2
      \le C B_{k+2}.
  \end{align}
  Writing $u=u-U+U$ gives
  \begin{align*}
    &\|\partial_t^k u \|_{L^2_{x,t}}^2+\|\partial_t^k  u(L_2,\cdot)\|_{L^2_{t}}^2
      \le C(B_k+\nu^2 B_{k+2}),\\
    &\|\partial_t^k\partial_x  u\|_{L^2_{x,t}}^2
      \le C(B_{k+1}+\nu B_{k+2}),\quad
     \|\partial_t^k\partial_x^2  u\|_{L^2_{x,t}}^2\
     \le C  B_{k+2}.
  \end{align*}
  Therefore there is a new constant $C$ and $\bar{\nu}$ such that for
  $\nu\le \bar{\nu}$, \eqref{eq:estimates_for_u} holds.
\end{proof}
 
We now present an improved estimate for the solution of the modified
advection problem in $\Omega_2$.
\begin{theorem}\label{th:hyperbolic2}
  Let $a>0$. Then there exist positive constants $C$ and $\bar{\nu}$
  such that, for $\nu \le \bar{\nu}$, and for any set $p$ compactly
  supported in $\overline{\Omega}_2\times(0,T]$, the solution $\uu$ of
    the initial boundary value problem with modified advection
  $$
  \left\{\begin{array}{rcll}
    {\cal L}_{ma} \uu& = &
        \ff &
        \mbox{in $(0,L_2)\times (0,T)$},\\
    \uu(L_2,\cdot) & = &
        0
         & \mbox{on $(0,T)$,} \\
    \uu(\cdot,0) & = &
           0&
           \mbox{in $(0,L_2)$}\\
    \end{array}\right.
  $$
  satisfies the estimate
%
%
  \begin{equation}\label{eq:adalld1}
    \|\partial_t^k\uu(0,\cdot)\|_{L^2_t}^2 \le \ds C\nu^2
    \big(
      \|\partial_t^k{\ff}(0,\cdot)\|_{L^2_t}^2
      +e^{-2\frac{ aL_2}{\nu}}\|\partial_t^k{\ff}(L_2,\cdot)\|_{L^2_t}^2
      +\nu \| \partial_t^k{\ff}\|_{H^{1}_{x,t}}^2
    \big).
  \end{equation}
\end{theorem}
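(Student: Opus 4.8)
The plan is to work from the explicit Duhamel representation furnished by Theorem~\ref{th:hyperbolic1} and to extract the claimed structure by a single integration by parts along characteristics. Here ${\cal L}_{ma}=\partial_t-a\partial_x+c+\frac{a^2}{\nu}$ is of the generic form \eqref{eq:genadv} with $b=-a<0$ and reaction coefficient $\eta=c+\frac{a^2}{\nu}$, so the inflow boundary is $x^-=L_2$ and $\tau(x)=\frac{L_2-x}{a}$. Since the coefficients of ${\cal L}_{ma}$ do not depend on $t$ and $p$ is compactly supported in time, $\partial_t^k v$ solves the same problem with $p$ replaced by $\partial_t^k p$ and with vanishing initial and boundary data; it therefore suffices to prove the estimate for $k=0$ and then apply it to $\partial_t^k p$. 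With $h=0$ and $g=0$, formula \eqref{eq:addistr} at $x=0$, after the substitution $\sigma=t-s$, reads
\begin{equation*}
v(0,t)=\int_0^{\min(t,\,L_2/a)} p(a\sigma,t-\sigma)\,e^{-\eta\sigma}\,d\sigma .
\end{equation*}

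Next I would integrate by parts in $\sigma$, writing $e^{-\eta\sigma}=-\frac1\eta\partial_\sigma e^{-\eta\sigma}$, to separate the three contributions that appear on the right of \eqref{eq:adalld1}. The boundary term at $\sigma=0$ produces the leading term $\frac1\eta p(0,t)$; the boundary term at the upper limit $\sigma=\min(t,L_2/a)$ either vanishes, when $t<L_2/a$, because $p$ vanishes at $t=0$ by compact support, or equals $-\frac1\eta p(L_2,t-\frac{L_2}{a})\,e^{-\eta L_2/a}$ when $t\ge L_2/a$; and the remaining integral is $\frac1\eta\int_0^{\min(t,L_2/a)}(a\partial_x p-\partial_t p)(a\sigma,t-\sigma)\,e^{-\eta\sigma}\,d\sigma$, since $\partial_\sigma[p(a\sigma,t-\sigma)]=(a\partial_x p-\partial_t p)(a\sigma,t-\sigma)$.

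The estimates of the three pieces then use only the elementary bounds $\frac1\eta\le\frac{\nu}{a^2}$ and $e^{-\eta L_2/a}\le e^{-aL_2/\nu}$. The first piece is bounded in $L^2_t$ by $\frac{\nu^2}{a^4}\|p(0,\cdot)\|_{L^2_t}^2$ and the second by $\frac{\nu^2}{a^4}e^{-2aL_2/\nu}\|p(L_2,\cdot)\|_{L^2_t}^2$, which are precisely the first two terms of \eqref{eq:adalld1}. For the remainder I would apply Cauchy--Schwarz in $\sigma$ against the weight $e^{-\eta\sigma}$, gaining a further factor $\eta^{-1}$, then integrate in $t$ and change variables $x=a\sigma$ to obtain
\begin{equation*}
\Big\|\tfrac1\eta\!\int_0^{\min(t,L_2/a)}\!\!(a\partial_x p-\partial_t p)(a\sigma,t-\sigma)\,e^{-\eta\sigma}\,d\sigma\Big\|_{L^2_t}^2
\le\frac{C}{a\,\eta^{3}}\,\|a\partial_x p-\partial_t p\|_{L^2_{x,t}}^2\le C\nu^3\|p\|_{H^1_{x,t}}^2 ,
\end{equation*}
using $\eta^{-3}\le\nu^3/a^6$; this matches the last term $C\nu^2\cdot\nu\|p\|_{H^1_{x,t}}^2$. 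Replacing $p$ by $\partial_t^k p$ throughout yields \eqref{eq:adalld1}.

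The \emph{main obstacle} is the power counting on the remainder. A direct Cauchy--Schwarz bound of $v(0,\cdot)$ would give only an $O(\nu)$ estimate for $\|v(0,\cdot)\|_{L^2_t}^2$ and would not isolate the trace $p(0,\cdot)$, whereas the integration by parts is exactly what peels off the leading $\frac1\eta p(0,t)$, relegates the $x=L_2$ data to an exponentially small tail, and exposes the extra power of $\nu$ on the derivative term via the combined $\eta^{-1}$ factors (one from the prefactor, one from Cauchy--Schwarz). Care is also needed to verify the vanishing of the upper boundary term through the compact support of $p$ in $(0,T]$, and to confirm that applying $\partial_t^k$ genuinely reduces to the case $k=0$.
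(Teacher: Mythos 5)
Your proposal is correct and takes essentially the same route as the paper's proof: both start from the characteristic (Duhamel) representation of $\uu(0,t)$, integrate by parts along the characteristic to isolate the leading term $p(0,t)/(c+a^2/\nu)$, the exponentially damped trace at $x=L_2$, and a remainder involving the characteristic derivative of $p$, which is then bounded via Cauchy--Schwarz to gain the extra power of $\nu$. Your reduction of the case $k>0$ to $k=0$ by noting that $\partial_t^k\uu$ solves the same problem with source $\partial_t^k p$ and vanishing data is also exactly the paper's closing argument.
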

\begin{proof}
  We first extend $p$ by 0 on $(T,+\infty)$. As in Theorem
  \ref{th:hyperbolic1}, $\uu$ can be obtained using the method of
  characteristics, 
  $$
    \uu(0,t)=\int_{\max(t-\frac{L_2}{a},0)}^t
      {\ff}(a(t-\sigma),\sigma)e^{-\tilde{c}(t- \sigma) }d\sigma,
  $$
  where $\tilde{c}=c+a^2/\nu$.  Integrating by parts, and denoting by
  $d_t$ the characteristic derivative, \textit{i.e.} $d_t
  \phi=\partial_t \phi -a \partial_x \phi$, we obtain
  \begin{align}\nonumber
    \uu(0,t)= \dfrac1{\tilde{c}}\Big(\underbrace{p(0,t)}_I&-
    \underbrace{\begin{cases}
    0 & \text{ for }t<L_2/a,\\
    p(L_2, t- \frac{L_2}{a})e^{-\tilde{c} \frac{L_2}{a}}& \text{ for }t>L_2/a
    \end{cases}}_{II}\\\nonumber
    &-
    \underbrace{\int_{\max(t-\frac{L_2}{a},0)}^t
     d_t{\ff}(a(t-\sigma),\sigma)e^{-\tilde{c}(t- \sigma) }d\sigma
    }_{III}\Big).
  \end{align}
  The norm of the first term is $\|I\|_{L^2_t}=\|p(0,\cdot)\|_{L^2_t}$,
  and the norm of the second term can be estimated as
  $$
    \|II\|_{L^2_t}^2= \ds \int_\frac{L_2}{a}^{ +\infty}
    {\ff}^2(L_2, t- \frac{L_2}{a})e^{-2\tilde{c} \frac{L_2}{a}}\,dt
    \le e^{-2\tilde{c} L_2/a}\|p(L_2,\cdot)\|_{L^2_t}^2.
  $$
  For the norm of the third term, we get
  \begin{align}\nonumber
    \|III\|_{L^2_t}^2&=\int _0^\frac{L_2}{a} \Big( \int _0^{t}
      d_t{\ff}(a(t-\sigma),\sigma)e^{-\tilde{c}(t- \sigma) }\,d\sigma
      \Big)^2\,dt \\\nonumber
     &+  \int _\frac{L_2}{a}^{+\infty} \Big(\int_{t-\frac{L_2}{a}}^t
      d_t{\ff}(a(t-\sigma),\sigma)e^{-\tilde{c}(t- \sigma) }\,d\sigma\Big)^2\,dt,
  \end{align}
  and using the Cauchy-Schwarz inequality, we obtain
  \begin{align}\nonumber
      \|III\|_{L^2_t}^2&\le \ds \dfrac{1}{2\tilde{c} }
      \Big( \int _0^\frac{L_2}{a} \int _0^{t}
      (d_t{\ff})^2(a(t-\sigma),\sigma)\,d\sigma\,dt +
      \int _\frac{L_2}{a}^{+\infty} \int_{t-\frac{L_2}{a}}^t
     (d_t{\ff})^2(a(t-\sigma),\sigma)\,d\sigma\,dt \Big)\\
    & \le \ds \dfrac{1}{2\tilde{c} }  \int _0^{+\infty}\int _0^{L_2}
      (d_t{\ff})^2(x,t )dx \,dt
      \le \dfrac{1}{2\tilde{c} }\|{\ff}\|_{H^{1}_{x,t}}^2,
  \end{align}
  which finally leads to the estimate
  \begin{equation*} 
    \|\uu(0,\cdot)\|_{L^2_t}^2 \le \ds\dfrac{C}{\tilde{c}^2}
    \Big(\|{\ff}(0,\cdot)\|_{L^2_t}^2
    +e^{-2\tilde{c}\frac{ L_2}{a}}\|{\ff}(L_2,\cdot)\|_{L^2_t}^2+
    \dfrac{1}{2\tilde{c} }\| {\ff}\|_{H^{1}_{x,t}}^2 \big).
  \end{equation*}
  Since $\tilde{c} \ge \dfrac{a^2}{\nu}$, we get
  \begin{equation*}  
    \|\uu(0,\cdot)\|_{L^2_t}^2 \le \ds C\nu^2\big(\|{\ff}(0,\cdot)\|_{L^2_t}^2
    +e^{-2\frac{ aL_2}{\nu}}\|{\ff}(L_2,\cdot)\|_{L^2_t}^2+
    \nu \| {\ff}\|_{H^{1}_{x,t}}^2 \big)
  \end{equation*}
  on the enlarged time interval $(0,+\infty)$. Using that the
  extension is vanishing for $t \ge T+\epsilon$ gives the estimate
  \eqref{eq:adalld1} for $k=0$ for any $\epsilon$, and thus on
  $(0,T)$.  Applying \eqref{eq:adalld1} to $\partial_t^k\uu$ gives then
  the general result.
\end{proof}

\begin{theorem}\label{th:theresult}
  Assume that $a>0$, and let $B_k:=\| f\|_{H^{k}_{x,t}}^2+
  \|h\|_{H^{k}_x}^2+\|g_1\|_{H^{k}_t}^2$.  Then there exist positive
  constants $C$ and $\bar{\nu}$ such that, for any data $h\in {\cal
    C}^\infty_0(\Omega_1)$, $f\in {\cal
    C}^\infty_0(\Omega\times(0,T])$, $g_1\in {\cal C}^\infty_0((0,T])$
      and $g_2\equiv 0$, and for any initial guess $g_{ad}^0\in {\cal
        C}^\infty_0((0,T])$ and any $\nu \le \bar{\nu}$, the
        approximation from the new algorithm (\ref{algoapos})
        satisfies the error bounds
  \begin{align}
    \|u-u_a^1\|_{L^2_{x,t}}^2 &\le C(B_2+\|g_{ad}^0 \|_{L^{2}_{t}}^2),&
    \|u-u_{ad}^1\|_{L^2_{x,t}}^2&\le C \nu^{5}(B_5+ \|g_{ad}^0 \|_{H^{3}_{t}} ^2)
    \label{eq:estimiteration1}\\
    \|u-u_a^2\|_{L^2_{x,t}}^2 &\le C\nu^2( B_5+ \nu^2\|g_{ad}^0 \|_{H^{3}_{t}} ^2),&
    \|u-u_{ad}^2\|_{L^2_{x,t}} ^2 &\le C \nu^{8}( B_8+ \nu\|g_{ad}^0 \|_{H^{6}_{t}} ^2).
  \label{eq:estimiteration2}
  \end{align}
\end{theorem}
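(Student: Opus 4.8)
The plan is to propagate estimates through the three error equations \eqref{errorequations} one subdomain at a time, using Theorem \ref{th:hyperbolic1} (energy estimate \eqref{eq:estgenadv}) for the forward transport step $e_a^k$, Theorem \ref{th:hyperbolic2} (estimate \eqref{eq:adalld1}) for the modified-advection step $e_{ma}^k$, and Lemma \ref{lemma:estimeAD2} (estimates \eqref{resADbord}) for the viscous step $e_{ad}^k$, together with Theorem \ref{th:parabolic2} to control the source $-\nu\partial_x^2 u$ and the interface trace $u(0,\cdot)$ in terms of the data $B_k$. Throughout, the central bookkeeping device is that a space derivative of any hyperbolic quantity can be traded for a time derivative plus a controlled source: from ${\cal L}_a e_a^k = -\nu\partial_x^2 u$ one gets $a\partial_x e_a^k = -\partial_t e_a^k - c e_a^k - \nu\partial_x^2 u$, so every $\|\partial_x\partial_t^j e_a^k\|_{L^2_{x,t}}$ reduces to $\|\partial_t^{j+1} e_a^k\|_{L^2_{x,t}}$, $\|\partial_t^j e_a^k\|_{L^2_{x,t}}$ and $\nu\|\partial_t^j\partial_x^2 u\|_{L^2_{x,t}}$, the last being controlled by \eqref{eq:estimates_for_u}.

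A preliminary step I would establish first is the interior trace bound $\|u(0,\cdot)\|_{H^k_t}^2 \le C B_{k+2}$. Writing $u = U + (u-U)$ with $U$ the transport solution of \eqref{eq:transportomega}, the trace $\|\partial_t^j U(0,\cdot)\|_{L^2_t}^2 \le C B_j$ follows by applying the energy estimate \eqref{eq:estgenadv} on $\Omega_1$, where $x=0$ is the outflow endpoint; the correction trace is controlled by the one-dimensional interpolation trace inequality $\|\partial_t^j(u-U)(0,\cdot)\|_{L^2_t}^2 \le C(\|\partial_t^j(u-U)\|_{L^2_{x,t}}^2 + \|\partial_t^j(u-U)\|_{L^2_{x,t}}\|\partial_t^j\partial_x(u-U)\|_{L^2_{x,t}})$, into which the bounds $\|\partial_t^j(u-U)\|_{L^2_{x,t}}^2 \le C\nu^2 B_{j+2}$ and $\|\partial_t^j\partial_x(u-U)\|_{L^2_{x,t}}^2 \le C\nu B_{j+2}$ of Theorem \ref{th:parabolic2} are inserted, giving an $O(\nu^{3/2})$ correction. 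I expect this to be the genuinely delicate point, since none of the quoted results provides a trace at the interior coupling interface $x=0$ directly.

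For the first iteration I would apply \eqref{eq:estgenadv} to $e_a^1$ with source $-\nu\partial_x^2 u$ and inflow data $e_{ad}^0(0,\cdot) = g_{ad}^0 - u(0,\cdot)$; since the boundary term does not decay, this yields the $O(1)$ bound $\|e_a^1\|_{L^2_{x,t}}^2 \le C(B_2 + \|g_{ad}^0\|_{L^2_t}^2)$, the first claim, and more generally $\|\partial_t^k e_a^1\|_{L^2_{x,t}}^2 \le C(B_{k+2} + \|g_{ad}^0\|_{H^k_t}^2)$. Feeding $\cR e_a^1 = (\partial_t+c)^2 e_a^1$ into \eqref{eq:adalld1}, and using the derivative-trading identity to convert the $H^1_{x,t}$ norm into time derivatives up to order three, gives $\|e_{ma}^1(0,\cdot)\|_{H^k_t}^2 \le C\nu^2(B_{k+5} + \|g_{ad}^0\|_{H^{k+3}_t}^2)$, the exponential term $e^{-2aL_2/\nu}$ being negligible. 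Finally $e_{ad}^1$ solves the viscous equation on $\Omega_1$ with only the absorbing datum $e_{ma}^1(0,\cdot)$, so \eqref{resADbord} yields $\|e_{ad}^1\|_{L^2_{x,t}}^2 \le C\nu^3\|e_{ma}^1(0,\cdot)\|_{L^2_t}^2 \le C\nu^5(B_5 + \|g_{ad}^0\|_{H^3_t}^2)$, the second claim.

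For the second iteration the one new ingredient is the pointwise-in-$x$ trace bound in \eqref{resADbord}, $\|\partial_t^k e_{ad}^1(0,\cdot)\|_{L^2_t} \le C\nu\|e_{ma}^1(0,\cdot)\|_{H^k_t}$, which supplies an inflow datum for $e_a^2$ that is now $O(\nu^2)$ smaller than at the first step. Repeating the energy estimate for $e_a^2$ shows its error is governed by the irreducible source $-\nu\partial_x^2 u$, giving $\|\partial_t^k e_a^2\|_{L^2_{x,t}}^2 \le C(\nu^2 B_{k+2} + \nu^4(B_{k+5}+\|g_{ad}^0\|_{H^{k+3}_t}^2))$ and hence the third claim $\|e_a^2\|_{L^2_{x,t}}^2 \le C\nu^2(B_5 + \nu^2\|g_{ad}^0\|_{H^3_t}^2)$; this floor is the quantitative reason the iteration cannot improve beyond two steps. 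Running $\cR e_a^2$ through \eqref{eq:adalld1} and then $e_{ma}^2(0,\cdot)$ through \eqref{resADbord} exactly as before, but starting one order of $\nu$ lower, produces $\|e_{ad}^2\|_{L^2_{x,t}}^2 \le C\nu^8(B_8 + \nu\|g_{ad}^0\|_{H^6_t}^2)$, the fourth claim. The remaining difficulty is purely regularity bookkeeping: tracking the exact Sobolev index $B_k$ that survives each step after the two time derivatives of $\cR$ and the extra derivative of the $H^1_{x,t}$ term, and checking that the smallness powers of $\nu$ compound correctly, namely $O(1)\to O(\nu)\to O(\nu^{5/2})$ on the first sweep and $O(\nu)\to O(\nu^{5/2})\to O(\nu^4)$ on the second.
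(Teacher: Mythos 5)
Your proposal is correct and follows essentially the same route as the paper's own proof: the same three-stage propagation through the error equations \eqref{errorequations} using Theorem \ref{th:hyperbolic1} for $e_a^k$, Theorem \ref{th:hyperbolic2} for $e_{ma}^k(0,\cdot)$ and Lemma \ref{lemma:estimeAD2} for $e_{ad}^k$, the same derivative-trading via ${\cal L}_a e_a^k=-\nu\partial_x^2 u$ to handle the $H^1_{x,t}$ term, and the same Sobolev-index bookkeeping leading to the exponents $\nu^5$ and $\nu^8$. The only (harmless) difference is at the interface trace bound $\|u(0,\cdot)\|^2_{H^k_t}\le CB_{k+2}$, which the paper gets in one line by applying the trace theorem to the $\nu$-uniform bounds \eqref{eq:estimates_for_u}, whereas you re-derive it through the decomposition $u=U+(u-U)$; both arguments are valid and rest on the same interpolation inequality.
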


\begin{proof}
We start with the proof for the first iteration of the new algorithm
(\ref{algoapos}), with the initial guess $g_{ad}^0$, which gives
$e_{ad}^0=g_{ad}^0-u(0,\cdot)$ in the algorithm (\ref{errorequations})
satisfied by the errors:

{\bf Advection:} The error $e_a^1$ is solution of an
advection equation in $\Omega_2$ with right hand side $-\nu
\partial_x^2 u$. Since the initial conditions vanish as described in
\eqref{eq:errordttout}, the hyperbolic estimate in Theorem
\ref{th:hyperbolic1} gives
\begin{equation*}
  \|\partial_t^k e_a^1\|_{L^2_{x,t}}^2 +\|\partial_t^k e_a^1(L_2,\cdot)\|_{L^2_{t}}^2 
  \le C( \nu^2\| \partial_t^k \partial_{x}^2u \|_{L^2_{x,t}}^2
    +\|e_{ad}^0 \|_{H^k_{t}}^2).
\end{equation*}
We bound $\|e_{ad}^0\|_{H^k_{t}}$ by
$\|g_{ad}^0\|_{H^k_{t}}+\|u(0,\cdot)\|_{H^k_{t}}$.  From
\eqref{eq:estimates_for_u}, for small $\nu$, $\| \partial_t^k
u(0,\cdot) \|_{L^2_{x,t}}^2 $ (by the trace theorem) and $\|
\partial_t^k \partial_{x}^2u \|_{L^2_{x,t}}^2 $ are bounded by
$CB_{k+2}$, which gives
\begin{equation}\label{eq:estea}
  \|\partial_t^k e_a^1\|_{L^2_{x,t}}^2 +\|\partial_t^k e_a^1(L_2,\cdot)\|_{L^2_{t}}^2
  \le C(\nu^2B_{k+2} +\|e_{ad}^0\|^2_{H^k_{t}} ) \le
  C(\nu^2B_{k+2} +\|g_{ad}^0 \|_{H^k_{t}}^2+ B_{k+2}).
\end{equation}
This equation gives for $k=0$ and small $\nu$ the first estimate in
\eqref{eq:estimiteration1}.

{\bf Modified advection:} Let
$p_a^1(0,\cdot):=(\partial_t+c)^2e_a^1(0,\cdot)=(\partial_t+c)^2e_{ad}^0$.
We estimate $\partial_t^k e_{ma}^1$ at $x=0$ using Theorem
\ref{th:hyperbolic2},
\begin{align}
  \| \partial_t^k e_{ma}^1(0,\cdot) \|_{L^2_t}^2 & \le \ds C \nu^2 (
   \|\partial_t^k {\ff}_a^1(0,\cdot)\|_{L^2_{t}}^2 
   +e^{-2\frac{ aL_2}{\nu}}\|\partial_t^k{\ff}_a^1(L_2,\cdot)\|_{L^2_t}^2
   +\nu\|\partial_t^k{\ff}_a^1\|_{H^1_{x,t}}^2 )\nonumber\\
    & \le \ds C \nu^2 ( \|e_{ad}^0\|_{H^{k+2}_t}^2 
    +e^{-2\frac{ aL_2}{\nu}}\|\partial_t^k{\ff}_a^1(L_2,\cdot)\|_{L^2_t}^2
    +\nu\|\partial_t^k{\ff}_a^1\|_{H^1_{x,t}}^2 ).
 \label{eq:ema1}  
\end{align}
For the last term on the right, we obtain
\begin{align*}
  \|\partial_t^{k}p_a^1\|_{H^1_{x,t}}^2&=\|\partial_t^{k}p_a^1\|_{L^2_{x,t}}^2+\|\partial_x\partial_t^{k}p_a^1\|_{L^2_{x,t}}^2+
  \|\partial_t^{k+1}p_a^1\|_{L^2_{x,t}}^2\\
  &\le C( \| p_a^1\|_{H^{k+1}(0,T;L^2_{x})}^2+\|\partial_x p_a^1\|_{H^{k}(0,T;L^2_{x})}^2)\\
  &\le C( \| e_a^1\|_{H^{k+3}(0,T;L^2_{x})}^2+ \|\partial_x e_a^1\|_{H^{k+2}(0,T;L^2_{x})}^2).
\end{align*}
Since ${\cal L}_a e_a^1=-\nu \partial_x^2 u$, we get $ \partial_x
e_a^1=-\frac1{a}(\nu \partial_x^2 u +(\partial_t+c)e_a^1)$, and hence
$\| \partial_x e_a^1\|_{H^{k+2}(0,T;L^2_x)}\le C(\nu\| \partial_x^2
u\|_{H^{k+2}(0,T;L^2_x)}+\|e_a^1\|_{H^{k+3}(0,T;L^2_{x})})$. Using
(\ref{eq:estea}), we finally obtain
\begin{align*}
  \|\partial_t^{k}p_a^1\|_{H^1_{x,t}}^2 \le C(\nu^2 B_{k+5}+\|e_{ad}^0\|_{H^{k+3}_t}^2
    +\nu^2\|  \partial_x^2 u\|_{H^{k+2}(0,T;L^2_x)}^2).
\end{align*}
We use now (\ref{eq:estimates_for_u}) which gives $\| \partial_x^2
u\|_{H^{k+2}(0,T;L^2_x)}^2\le CB_{k+4}$ and return to (\ref{eq:ema1}).
From the hyperbolic estimate \eqref{eq:estea}, we see that
$\|\partial_t^k{e}_a^1(L_2,\cdot)\|_{L^2_t}^2$ and
$\|\partial_t^k{e}_a^1\|_{L^2_{x,t}}^2$ are bounded by the same
quantity, and hence we can also use the same bound for $e^{-2\frac{
    aL_2}{\nu}}\|\partial_t^k{\ff}_a^1(L_2,\cdot)\|_{L^2_t}^2$ and
${\nu}\|\partial_t^k{\ff}_a^1\|_{H^1_{x,t}}^2$,
\begin{align}
  \|\partial_t^k e_{ma}^1(0,\cdot) \|_{L^2_t}^2
  \le  \ds &C  (  \nu^2\|e_{ad}^0\|_{H^{k+3}_{t}}^2+\nu^5B_{{k+5}} ). 
  \label{eq:ema1final}  
\end{align}

{\bf Advection-diffusion:} $e_{ad}^1$ is solution of
the advection-diffusion equation with non-zero data only on the
right. Therefore, applying Lemma \ref{lemma:estimeAD2} in $\Omega_1$
with $g_2= e_{ma}^1(0,\cdot)$, we obtain
$$
  \|e_{ad}^{1}\|_{L^2_{x,t}}^2 \le C \nu^{3}  \| e_{ma}^1(0,\cdot) \|_{L^2_t}^2.
$$
Using \eqref{eq:ema1final} with $k=0$ for the last term, we obtain
\begin{equation}\label{eq:estimad1}
  \|e_{ad}^{1}\|_{L^2_{x,t}}^2 \le  C\nu^{5} (\|e_{ad}^0 \|_{H^{3}_{t}}^2+\nu^2B_{{5}})
    \le  C\nu^{5} (B_{{5}}+\|g_{ad}^0 \|_{H^{3}_{t}}^2).
\end{equation}
This equation gives the second estimate in \eqref{eq:estimiteration1},
but we will also need to estimate the value of $e_{ad}^1$ at
$x=0$. Using (\ref{resADbord}) we get
$$
  \| \partial_t^ke_{ad}^1(0,\cdot)\|_{L^2_{t}}^2 \le 
    C\nu^2\| \partial_t^ke_{ma}^1(0,\cdot)\|_{L^2_{t}}^2,
$$
which gives by \eqref{eq:ema1final} again
\begin{align}
  \| \partial_t^ke_{ad}^1(0,\cdot)\|_{L^2_{t}}^2
    \le &C\nu^4 ( \nu^3B_{{k+5}}+\|e_{ad}^0 \|_{H^{k+3}_{t}}^2 ).
  \label{eq:estimad1bord}
\end{align}
In particular, we have
\begin{equation}\label{eq:estimrad1bord}
  \| \cR\,e_{ad}^1(0,\cdot)\|_{L^2_{t}}^2 \le 
    C\nu^4(\nu^3B_{{7}}+\|e_{ad}^0 \|_{H^{5}_{t}}^2).
\end{equation}
We now prove the error estimates for the second iteration:

{\bf Advection:} We again use the hyperbolic estimates for $e_a^2$.
Since the initial values are also vanishing, we obtain as in
\eqref{eq:estea} the estimate
\begin{align}
    \|\partial_t^k e_a^2 \|_{L^2_{x,t}}^2
    +\|\partial_t^k e_a^2(L_2,\cdot) \|_{L^2_{t}}^2
    &\le C(\nu^2B_{k+2}+\|e_{ad}^1(0,\cdot)\|_{H^k_{t}}^2 ).
\end{align}
Inserting \eqref{eq:estimad1bord} we get
\begin{align}
    \|\partial_t^k e_a^2(L_2,\cdot) \|_{L^2_{t}}^2 +\|\partial_t^k e_a^2 \|_{L^2_{x,t}}^2
    &\le C(\nu^2B_{k+2} + \nu^4 ( \nu^3B_{{k+5}} +\|e_{ad}^0 \|_{H^{k+3}_{t}}^2 ))
    \nonumber\\
    &\le C\nu^2( B_{{k+5}}+ \nu^2 \|e_{ad}^0 \|_{H^{k+3}_{t}}^2)
    \label{eq:estea2a}\\
    &\le C\nu^2(B_{{k+5}}+ \nu^2 \|g_{ad}^0\|_{H^{k+3}_{t}}^2+\nu^2 B_{k+5})\nonumber.
\end{align}
The last estimate with $k=0$ gives the first result in
(\ref{eq:estimiteration2}).  

{\bf Modified advection:} Defining
$p_a^2:=\cR\,e_a^2=(\partial_t+c)^2e_a^2$, we obtain using
(\ref{eq:estea2a})
\begin{equation}
  \|\partial_t^k p_a^2(L_2,\cdot) \|_{L^2_{t}}^2+ \| \partial_t^k p_a^2 \|_{L^2_{x,t}}^2
    \le C \nu^2 ( B_{{k+7}}+\|e_{ad}^0 \|_{H^{k+5}_{t}}^2).
     \label{eq:pa2final}
\end{equation}
We estimate $e_{ma}^2$ at $x=0$ by Theorem \ref{th:hyperbolic2},
\begin{equation}
  \| e_{ma}^2(0,\cdot) \|_{L^2_t}^2 \le \ds C\nu^2(
    \|{\ff}_a^2(0,\cdot)\|_{L^2_{t}}^2
    +e^{-2\frac{ aL_2}{\nu}}\|{\ff}_a^2(L_2,\cdot)\|_{L^2_t}^2
    +\nu\|{\ff}_a^2\|_{H^1_{x,t}}^2 ).
    \label{eq:ema2}
\end{equation}
As in the first step, the term at the boundary $x=L_2$ is absorbed in the
volume term, and ${\ff}_a^2(0,\cdot)= \cR\,e^1_{ad}(0,\cdot)$, which
can be estimated by \eqref{eq:estimrad1bord}.  To estimate the term
$\|p_a^2\|_{H^1_{x,t}}$, we proceed as in the first iteration, to obtain
\begin{align}
  \|p_a^2\|_{H^1_{x,t}}^2& \le C( \|e_a^2\|_{H^3(0,T;L^2_{x})}^2
    + \nu^2\| \partial_x^2 u\|^2_{H^2(0,T;L^2_{x})})\nonumber\\
  &\le C (\|e_a^2\|_{H^3(0,T;L^2_{x})}^2+\nu^2 B_4) \nonumber\\  
  &\le C \nu^2( B_{{8}}+ \nu^2 \|e_{ad}^0 \|_{H^{6}_{t}}^2).
  \label{eq:em2ed}
\end{align}
Inserting (\ref{eq:estimrad1bord}) and (\ref{eq:em2ed}) into
\eqref{eq:ema2} we get
\begin{align*}
  \| e_{ma}^2(0,\cdot) \|_{L^2_t}^2
    &\le \ds C\nu^2 ( \nu^4 ( \nu^3B_{{7}}+\|e_{ad}^0 \|_{H^{5}_{t}}^2)
      + \nu^3 ( B_{{8}}+ \nu^2\|e_{ad}^0 \|_{H^{6}_{t}}^2 ))\\ 
  &\le \ds C\nu^5 ( B_{{8}}+\nu\|e_{ad}^0 \|_{H^{6}_{t}}^2).
\end{align*}

{\bf Advection-diffusion:} $e_{ad}^2$ is solution of the
advection-diffusion equation with data only on the right. Therefore,
applying Lemma \ref{lemma:estimeAD2} in $\Omega_1$ with $g_2=
e_{ma}^2(0,\cdot)$, we obtain
\begin{align*}\label{eq:ead2}
 \|e_{ad}^{2}\|_{L^2_{x,t}}^2
    \le C \nu^{3}  \| e_{ma}^2(0,\cdot) \|_{L^2_t}^2  \le \ds
    C\nu^8 ( B_{{8}}+\nu\|e_{ad}^0 \|_{H^{6}_{t}}^2).
\end{align*}
\end{proof}

\section{Properties of the factorization algorithm for negative advection}
\label{sec:wpalgo2}

We consider now the advection-diffusion equation for $a<0$ in
$\Omega=(-L_1,L_2)$ with Dirichlet boundary conditions on both
sides,
\begin{equation}\label{eq:advisqanegdir}
  \begin{array}{rcll}
  {\cal L}_{ad}u:=\displaystyle\partial _t u
 -\nu \partial _{xx} u + a \partial _x u
    +cu & = & f  &
      \mbox{in $\Omega\times(0,T)$},\\
   u(-L_1,\cdot) & = & g_1\quad & \mbox{in $(0,T)$}, \\
   u(L_2,\cdot) & = &g_2& \mbox{in $(0,T)$},\\
   u(\cdot,0) & = & h & \mbox{in $\Omega$}.
  \end{array}
\end{equation}
We suppose again that $f$ and $(g_1,g_2)$ are compactly supported in
$(0,T]$, and that $h$ is compactly supported in $\Omega_1=(-L_1,0)$,
and that for each $t$ the function $f(\cdot,t)$ is compactly supported in
$\Omega$.

\subsection{Well-posedness of the factorization algorithm}

For $u_a^1$, suppose that $f\in H^{3+\frac34}(\Omega\times (0,T))$,
$g_2\in H^{3+\frac34}(0,T)$, $h\in H^{3+\frac34}(\Omega)$, and that
the compatibility conditions \eqref{eq:CCgenadv} are satisfied. Then
we have a unique solution $u_a^1$ in
$W^{3+\frac34}(\Omega_2\times(0,T))$.  For $u_a^2$, using the previous
result, we have $f -\cR\, u_a^1\in
H^{1+\frac34}(\Omega_2\times(0,T))$, and ${\cal L}_{ma} u_a^1\in
H^{2+\frac34}(\Omega_2\times(0,T))$. Therefore the traces at $x=L_2$
and $t=0$ are in $H^{1+\frac34}$ and compatible. Thus \eqref{algoaneg}
defines a unique $u_a^2$ in
$H^{1+\frac34}(\Omega_2\times(0,T))$. Furthermore $u_a^2(0,\cdot)\in
H^{\frac54}(0,T)$. For $u_{ad}$, Theorem \ref{th:parabolic} applies
with $\gamma=\frac54$, and \eqref{algoaneg} defines a unique $u_{ad}$
in $H^{\frac92,\frac94}(\Omega_1\times(0,T))$.  Finally for $u$, using
the regularity assumptions above, $u \in
H^{2(\gamma+1),\gamma+1}(\Omega \times(0,T))$ with
$\gamma=\frac{11}8$.

\subsection{Error estimates for the factorization algorithm}
\label{subsec:errestimaneg}

We need a further lemma in order to obtain our asymptotic estimates.
\begin{lemma}\label{lemma:advdifferreuraneg}
  Suppose $a<0$, and let 
  $g\in L^2(0,T)$. Then there exists a constant $C>0$, such that for
  all $\nu>0$ the solution $v$ of
  \begin{equation}\label{eq:error:ad:aneg}
    \begin{array}{rcll}
    {\cal L}_{ad}{\uu}
       & = & 0 & \mbox{in $\Omega_1\times (0,T)$},\\
    {\uu}(-L_1,\cdot) & = & 0 & \mbox{on $(0,T)$},\\
   (\partial_t-a \partial_x+(\frac{a^2}{\nu}+c)){\uu}(0,\cdot)
       & =&g\hspace{1.95em} & \mbox{on $(0,T)$},\\
   {\uu}(\cdot,0) &=& 0 &\mbox{in $\Omega_1$},
  \end{array}
\end{equation}
satisfies the a priori estimate
$$
  \|\uu\|_{L^2_{x,t}}^2 \le C \nu^2\|g\|_{L^2_t}^2.
$$
\end{lemma}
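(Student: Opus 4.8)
The plan is to avoid the Fourier-transform machinery used in Lemma~\ref{lemma:estimeAD2} and instead derive the bound directly by an energy estimate in physical space, which has the advantage of yielding the estimate for \emph{all} $\nu>0$ rather than only for $\nu$ small. The first observation is that the oblique boundary operator at $x=0$ is exactly the modified advection operator of the factorization \eqref{eq:factop}, namely $\partial_t-a\partial_x+(a^2/\nu+c)={\cal L}_{ma}$, so the boundary condition reads ${\cal L}_{ma}v(0,\cdot)=g$. Since $g$ is only assumed to be in $L^2(0,T)$, the solution $v$ need not be regular enough to integrate by parts classically; I would therefore first establish the a priori estimate for smooth, compatible data (for which Theorem~\ref{th:parabolic} guarantees a sufficiently regular solution), and then extend it to $g\in L^2$ by density, using that the resulting bound is uniform.

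Next I would multiply the equation ${\cal L}_{ad}v=0$ by $v$, integrate over $\Omega_1=(-L_1,0)$, and integrate the diffusion and advection terms by parts in $x$. Because $v(-L_1,\cdot)=0$, all boundary contributions at the left endpoint vanish, and the only surviving boundary terms are at $x=0$, giving
\begin{equation*}
\frac12\frac{d}{dt}\|v(\cdot,t)\|_{L^2_x}^2+\nu\|\partial_xv(\cdot,t)\|_{L^2_x}^2+c\|v(\cdot,t)\|_{L^2_x}^2-\nu(\partial_xv)(0,t)\,v(0,t)+\frac{a}{2}v(0,t)^2=0.
\end{equation*}

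The heart of the argument is the treatment of the term $-\nu(\partial_xv)(0,t)v(0,t)$. Solving the boundary relation for $\partial_xv(0,t)=\frac1a\big(\partial_tv(0,t)+(c+a^2/\nu)v(0,t)-g\big)$ and substituting, the dangerous factor $a^2/\nu$ is multiplied by $-\nu/a$ and produces the $\nu$-independent contribution $-a\,v(0,t)^2=|a|\,v(0,t)^2$; this is precisely what makes the final bound uniform in $\nu$. Collecting terms and using $a<0$, one obtains
\begin{equation*}
\frac12\frac{d}{dt}\Big(\|v\|_{L^2_x}^2+\frac{\nu}{|a|}v(0,t)^2\Big)+\nu\|\partial_xv\|_{L^2_x}^2+c\|v\|_{L^2_x}^2+\Big(\frac{\nu c}{|a|}+\frac{|a|}2\Big)v(0,t)^2=\frac{\nu}{|a|}g\,v(0,t),
\end{equation*}
where every coefficient on the left is nonnegative. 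The main obstacle is exactly this sign bookkeeping: one must check that the advective flux term $\frac a2v(0)^2$, the reaction contribution $-\nu c\,v(0)^2/a$, and the $a^2/\nu$ term combine, for $a<0$, into a genuinely positive boundary quadratic form rather than an indefinite one.

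Finally I would apply Young's inequality to the right-hand side in the form $\frac{\nu}{|a|}g\,v(0)\le\frac{|a|}4v(0)^2+\frac{\nu^2}{|a|^3}g^2$, absorb the $v(0)^2$ part into the positive boundary term, integrate in time over $(0,T)$, and use the vanishing initial data $v(\cdot,0)=0$. Discarding all remaining nonnegative terms except $c\|v\|_{L^2_{x,t}}^2$ then yields $c\|v\|_{L^2_{x,t}}^2\le\frac{\nu^2}{|a|^3}\|g\|_{L^2_t}^2$, which is the claimed estimate with $C=1/(c|a|^3)$. The emergence of the factor $\nu^2$ is transparent in this computation: it comes from the coefficient $\nu/|a|$ in front of $g$ in the energy identity, squared through Young's inequality.
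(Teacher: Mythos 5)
Your proposal is correct and follows essentially the same route as the paper's own proof: the identical energy estimate on $\Omega_1$, elimination of $\partial_x \uu(0,t)$ via the oblique boundary condition so that the $a^2/\nu$ term produces the uniform-in-$\nu$ boundary contribution $|a|\,\uu^2(0,t)$, Young's inequality in the form $\frac{\nu}{|a|}|g\,\uu(0,t)|\le \frac{|a|}{4}\uu^2(0,t)+\frac{\nu^2}{|a|^3}g^2$, and integration in time with vanishing initial data, yielding the same constant $C=1/(c|a|^3)$. The only addition is your preliminary smoothing/density remark, which the paper leaves implicit.
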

\begin{proof}
Multiplying the equation by $\uu$, integrating on $(-L_1,0)$ and using
the boundary condition at $x=-L_1$ yields
\begin{equation*}
  \frac{1}{2}\frac{d}{dt}\|\uu(\cdot,t)\|^2_{L^2_x}
  -\frac{|a|}{2}\uu^2(0,t)+\nu\|\partial_x\uu(\cdot,t)\|^2_{L^2_x}
  -\nu \partial_x\uu(0,t)\uu(0,t)+c\|\uu(\cdot,t)\|^2_{L^2_x}=0.
\end{equation*}
Inserting the boundary condition at $x=0$ we obtain
\begin{gather*}
  \frac{1}{2}\frac{d}{dt}(\|\uu(\cdot,t)\|^2_{L^2_x}+\frac{\nu}{|a|}\uu^2(0,t))
  +(\frac{|a|}{2}+\frac{\nu c}{|a|})\uu^2(0,t)\\\hspace{3cm}
  +\nu\|\partial_x\uu(\cdot,t)\|^2_{L^2_x}
  +c\|\uu(\cdot,t)\|^2_{L^2_x}=\frac{\nu}{|a|}g(t)\uu(0,t).
\end{gather*}
Using the inequality
$\nu|g(t)\uu(0,t)|/|a|\le \frac{\nu^2}{|a|^3}g^2(t)+\frac{|a|}{4}\uu^2(0,t)$
and integrating on the time interval $(0,T)$ gives for all $t\in (0,T)$
\begin{equation*}
  c\|\uu\|^2_{L^2_{x,t}} \le \frac{\nu^2}{|a|^3}\int_0^tg^2(\tau)\,d\tau.
\end{equation*}
\end{proof}

We can now prove our main theorem for negative advection.
\begin{theorem}\label{theo_conv_ad:aneg}
  Suppose $a<0$. Then there are positive constants $C $ and
  $\bar{\nu}$ such that for any $h\in {\cal C}^\infty_0(\Omega_1)$,
  $f\in {\cal C}^\infty_0(\Omega\times(0,T])$, $g_1, g_2\in {\cal
  C}^\infty_0((0,T])$, and for any $\nu \le \bar{\nu}$, the
  solution obtained by the new factorization algorithm \eqref{algoaneg}
  satisfies the estimates
  \begin{align}\label{eq:errestglobaneg}
     \|u-u_{a}^1\|_{L^2_{x,t}}&\le C \nu  \|\partial_{x}^2u\|_{L^2_{x,t}},\\
      \quad \|u-u_{ad}\|_{L^2_{x,t}}&\le  C \nu^{2} (\|u\|_{H^{2,2}_{x,t}}
    + \|f(\cdot,0)\|_{H^{2}_{x}}+\|h\|_{H^4_x}
    + \|\partial_x^2u(L_2,\cdot)\|_{L^2_t}),
  \end{align}
  which implies that 
  $$
    \|u-u_{a}^1\|_{L^2_{x,t}} \lesssim \nu, \quad
    \|u-u_{ad}\|_{L^2_{x,t}} \lesssim\nu^{2} .
  $$
\end{theorem}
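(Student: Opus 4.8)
The plan is to propagate the error through the three substeps of algorithm \eqref{algoaneg}, using the two a priori tools already available: the transport estimates of Theorem \ref{th:hyperbolic1} in $\Omega_2$, and the dissipative estimate of Lemma \ref{lemma:advdifferreuraneg} in $\Omega_1$. I set $e_a^1:=u_a^1-u$, $e_a^2:=u_a^2-{\cal L}_{ma}u$ and $e_{ad}:=u_{ad}-u$. Since ${\cal L}_{ad}u=f$ gives ${\cal L}_a u=f+\nu\partial_x^2u$, the error $e_a^1$ solves ${\cal L}_a e_a^1=-\nu\partial_x^2u$ in $\Omega_2$ with $e_a^1(L_2,\cdot)=0$ (because $g_2=u(L_2,\cdot)$) and $e_a^1(\cdot,0)=0$. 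Estimate \eqref{eq:estgenadv} with $k=0$ then yields at once $\|u-u_a^1\|_{L^2_{x,t}}\le C\nu\|\partial_x^2u\|_{L^2_{x,t}}$, the first claim. For the last equation, $e_{ad}$ solves the homogeneous problem \eqref{eq:error:ad:aneg} in $\Omega_1$ with zero Dirichlet datum at $-L_1$, zero initial value, and transmission datum ${\cal L}_{ma}e_{ad}(0,\cdot)=e_a^2(0,\cdot)$, so Lemma \ref{lemma:advdifferreuraneg} reduces the whole problem to a single trace estimate,
\begin{equation*}
  \|u-u_{ad}\|_{L^2_{x,t}}\le C\nu\,\|e_a^2(0,\cdot)\|_{L^2_t}.
\end{equation*}
Thus it remains to show $\|e_a^2(0,\cdot)\|_{L^2_t}\le C\nu\,(\cdots)$, which supplies the extra factor $\nu$.

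Next I would set up the transport problem for $e_a^2$. Subtracting the exact relation ${\cal L}_a({\cal L}_{ma}u)=\frac{a^2}{\nu}f+\cR u$ from the second line of \eqref{algoaneg} shows ${\cal L}_a e_a^2=\cR e_a^1$ in $\Omega_2$, with initial value ${\cal L}_{ma}e_a^1(\cdot,0)$ and inflow value ${\cal L}_{ma}e_a^1(L_2,\cdot)$ at $x=L_2$. The point is that both data are explicitly computable from the equation for $e_a^1$: since $e_a^1$ and $\partial_t e_a^1$ vanish at $t=0$ in $\Omega_2$ (because $h$ and $f(\cdot,0)$ vanish there), the initial value is $0$; and since $e_a^1(L_2,\cdot)\equiv0$, evaluating ${\cal L}_a e_a^1=-\nu\partial_x^2u$ at $x=L_2$ gives $\partial_x e_a^1(L_2,\cdot)=-\frac{\nu}{a}\partial_x^2u(L_2,\cdot)$, hence $e_a^2(L_2,\cdot)={\cal L}_{ma}e_a^1(L_2,\cdot)=\nu\partial_x^2u(L_2,\cdot)$. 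Applying the trace part of \eqref{eq:estgenadv} to $e_a^2$ (outflow at $x=0$, inflow at $x=L_2$) then gives
\begin{equation*}
  \|e_a^2(0,\cdot)\|_{L^2_t}^2\le C\big(\|\cR e_a^1\|_{L^2_{x,t}}^2+\nu^2\|\partial_x^2u(L_2,\cdot)\|_{L^2_t}^2\big),
\end{equation*}
so everything comes down to the bound $\|\cR e_a^1\|_{L^2_{x,t}}\le C\nu\,(\cdots)$.

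The heart of the proof, and the main obstacle, is this bound on $\cR e_a^1=(\partial_t+c)^2e_a^1$: it carries two time derivatives, while $e_a^1$ is only controlled at size $O(\nu)$ through the transport estimate, and a naive use of ${\cal L}_a\partial_t^2e_a^1=-\nu\partial_t^2\partial_x^2u$ would demand $\partial_t^2\partial_x^2u$, far beyond the $H^{2,2}$-type control one wants on $u$. I would instead trade the time derivatives against the transport equation, which smooths by one order. From $(\partial_t+c)e_a^1={\cal L}_a e_a^1-a\partial_x e_a^1=-\nu\partial_x^2u+|a|\partial_x e_a^1$ one obtains, applying $(\partial_t+c)$ once more and commuting constant-coefficient operators,
\begin{equation*}
  \cR e_a^1=-\nu\,\partial_x^2\big((\partial_t+c)u\big)-|a|\nu\,\partial_x^3u+a^2\partial_x^2e_a^1,
\end{equation*}
and then substituting $(\partial_t+c)u={\cal L}_a u-a\partial_x u=f+\nu\partial_x^2u-a\partial_x u$ from ${\cal L}_{ad}u=f$. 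Every surviving $u$-term now carries a factor $\nu$; using ${\cal L}_{ad}u=f$ a second time to rewrite $\nu\partial_t\partial_x^2u$, $\nu\partial_x^3u$ and $\nu^2\partial_x^4u$ as $\nu$ times quantities measured by the norms on the right-hand side of the claim, the only remaining piece is $a^2\partial_x^2e_a^1$. This I would estimate directly with Theorem \ref{th:hyperbolic1} applied to the twice $x$-differentiated equation ${\cal L}_a\partial_x^2e_a^1=-\nu\partial_x^4u$, whose initial value vanishes in $\Omega_2$ and whose inflow value at $L_2$ is again $O(\nu)$ by differentiating the equation; this is precisely what produces the boundary contribution $\|\partial_x^2u(L_2,\cdot)\|_{L^2_t}$. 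Collecting terms yields $\|\cR e_a^1\|_{L^2_{x,t}}\le C\nu(\|u\|_{H^{2,2}_{x,t}}+\|f(\cdot,0)\|_{H^2_x}+\|h\|_{H^4_x}+\|\partial_x^2u(L_2,\cdot)\|_{L^2_t})$, the initial-data norms entering when the time derivatives of $u$ are reduced to the data through $\partial_t^k u(\cdot,0)=(-{\cal M}_{ad})^kh$ and the equation at $t=0$. Combined with the reduction of the first paragraph, this gives the stated $\nu^2$ estimate.

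I expect the genuinely delicate step to be the regularity bookkeeping in the last paragraph: one must verify that each differentiation of $e_a^1$ is matched by a power of $\nu$ and is absorbed either into $\|u\|_{H^{2,2}_{x,t}}$ via ${\cal L}_{ad}u=f$ or into a trace at $t=0$ or $x=L_2$, so that no uncompensated $\partial_t^2\partial_x^2u$ ever survives — this is exactly the improvement over treating the inviscid and viscous domains as physically distinct. A secondary point needing care is confirming that all initial and corner data truly vanish in $\Omega_2$ (so that $e_a^2(\cdot,0)=0$), which is where the hypotheses that $h$ is supported in $\Omega_1$ and $f$ in $(0,T]$ are used.
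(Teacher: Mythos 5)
Your reduction chain coincides with the paper's: the same error quantities and error equations, the same key observation that $e_a^1(L_2,\cdot)\equiv 0$ forces ${\cal L}_{ma}e_a^1(L_2,\cdot)=-a\partial_xe_a^1(L_2,\cdot)=\nu\partial_x^2u(L_2,\cdot)$, the same use of Theorem \ref{th:hyperbolic1} for the two transport solves in $\Omega_2$ and of Lemma \ref{lemma:advdifferreuraneg} for the advection-diffusion solve in $\Omega_1$ (your observation that $e_a^2(\cdot,0)$ vanishes under the support hypothesis on $h$ is also fine; the paper keeps it as $-\nu d_x^2h$). So everything correctly reduces to proving $\|\cR\,e_a^1\|_{L^2_{x,t}}\le C\nu(\cdots)$. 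The divergence, and the gap, is in how you produce this last bound.

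The step that fails is the substitution ``using ${\cal L}_{ad}u=f$ a second time.'' From $\nu\partial_x^2u=\partial_tu+a\partial_xu+cu-f$ one gets, for instance,
\begin{equation*}
\nu\,\partial_t\partial_x^2u=\partial_t^2u+a\,\partial_t\partial_xu+c\,\partial_tu-\partial_tf,
\end{equation*}
so the substitution \emph{consumes} the factor $\nu$: the right-hand side is $O(1)\cdot(\|u\|_{H^{2,2}_{x,t}}+\|f\|_{H^{0,1}_{x,t}})$, not $\nu$ times anything. The same happens for $\nu\partial_x^3u$ and $\nu^2\partial_x^4u$, so your identity $\cR\,e_a^1=-\nu\partial_x^2((\partial_t+c)u)-|a|\nu\partial_x^3u+a^2\partial_x^2e_a^1$ cannot be converted this way into the claimed bound $C\nu(\|u\|_{H^{2,2}_{x,t}}+\cdots)$. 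If instead you keep the terms as they stand, you need $\nu\|\partial_t\partial_x^2u\|$, $\nu\|\partial_x^3u\|$ and $\nu^2\|\partial_x^4u\|$ over $\Omega_2$, i.e.\ third and fourth derivatives of $u$, which are uniformly bounded in $\Omega_2$ only through a higher-order version of the boundary-layer expansion --- more than the theorem's stated norms. A second, related problem: the transport estimate for $\partial_x^2e_a^1$ needs its inflow value at $x=L_2$, and differentiating the equation there gives $a\partial_x^2e_a^1(L_2,\cdot)=-\nu\partial_x^3u(L_2,\cdot)+\frac{\nu}{a}\partial_t\partial_x^2u(L_2,\cdot)+\frac{c\nu}{a}\partial_x^2u(L_2,\cdot)$, so traces of \emph{third} derivatives of $u$ appear, not only $\|\partial_x^2u(L_2,\cdot)\|_{L^2_t}$ as you assert. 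The paper avoids both problems by differentiating the error equation in \emph{time} rather than in space: since $e_a^1(L_2,\cdot)\equiv0$, every $\partial_t^ke_a^1$ has zero inflow datum at $L_2$, and Theorem \ref{th:hyperbolic1} applied to ${\cal L}_a\partial_t^ke_a^1=-\nu\partial_t^k\partial_x^2u$ for $k=0,1,2$ gives
\begin{equation*}
\|\partial_t^ke_a^1\|^2_{L^2_{x,t}}\le C\bigl(\nu^2\|\partial_t^k\partial_x^2u\|^2_{L^2_{x,t}}+\|\partial_t^ke_a^1(\cdot,0)\|^2_{L^2_x}\bigr),
\end{equation*}
where the initial values are computed explicitly from the equation, $\partial_te_a^1(\cdot,0)=-\nu d_x^2h$ and $\partial_t^2e_a^1(\cdot,0)=\nu(ad_x+c)d_x^2h-\nu d_x^2\partial_tu(\cdot,0)$, hence are $O(\nu)$ in terms of $\|h\|_{H^4_x}$ and $\|f(\cdot,0)\|_{H^2_x}$ --- this is the origin of those two norms in the statement, and only \emph{second} $x$-derivatives of $u$ ever appear (the terms $\nu\|\partial_t^k\partial_x^2u\|$ are what the paper absorbs into $\nu\|u\|_{H^{2,2}_{x,t}}$). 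Replacing your third paragraph by this time-differentiation argument makes the rest of your proof go through essentially as the paper's.
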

\begin{proof}
We define the errors $e_a^1:=u_a^1-u$, $e_{ad}:=u_{ad}-u$, and
$e_{a}^2:=u_{a}^2-{\cal L}_{ma} u$. Since ${\cal
  L}_{ma}u(\cdot,0)=f(\cdot,0)-2ad_xh+a^2h/\nu+\nu d_x^2h$, the
equations for the error are
$$
  \left\{\begin{array}{l}
    {\cal L}_{a}e_a^1 =-\nu\partial_{x}^2u \mbox{ in } \Omega_2,\\
    e_a^1(L_2,\cdot)=0,\\
    e_a^1(\cdot,0)=0,
  \end{array}\right.
  \left\{\begin{array}{l}
    {\cal L}_{a}e_{a}^2 =\cR\,e_a^1  \mbox{ in } \Omega_2,\\
    e_{a}^2(L_2,\cdot)={\cal L}_{ma}e_a^1(L_2,\cdot),\\
    e_{a}^2(\cdot,0)= -\nu d_x^2h,\\
  \end{array}\right.
  \left\{\begin{array}{l}
    {\cal L}_{ad}e_{ad} =0 \mbox{ in } \Omega_1,\\
    e_{ad}(-L_1,\cdot)=0,\\
    {\cal L}_{ma}e_{ad}(0,\cdot)=e_{a}^2(0,\cdot),\\
    e_{ad}(\cdot,0)=0.
  \end{array}\right.
$$
We now analyze each of the three solves separately:

{\bf First advection equation in $\Omega_2$:} With
Theorem~\ref{th:hyperbolic1}, we find that the error $e_a^1$ satisfies
for $k=0,1 \mbox{ and } 2$ the estimate
\begin{equation}\label{eq:errestglobaneg1}
 \|\partial_t^k e_a^1\|_{L^2_{x,t}}^2
 +|a| \|\partial_t^k e_a^1(0,\cdot)\|_{L^2_{t}}^2
 \le
 C\left( \nu^2
 \|\partial_t^k\partial_{x}^2u\|^2_{L^2_{x,t}}
 +\|\partial_t^ke_a^1(\cdot,0)\|_{L^2_x}^2\right).
\end{equation}
The case $k=0$ yields the first result of the theorem. We further compute
$$
  \partial_te_a^1(\cdot,0)=-\nu d_x^2h, \hspace{0.5cm}
  \partial_t^2e_a^1(\cdot,0)=\nu(ad_x+c)d_x^2h-\nu d_x^2 \partial_tu(\cdot,0),
$$
with $\partial_tu(\cdot,0)=f(\cdot,0)-(ad_xh+ch-\nu d_{x}^2h)$, so that
$$
  \|\partial_te_a^1(\cdot,0)\|_{L^2_x}^2\le \nu^2 \|h\|_{H^2_{x}},\hspace{0.5cm}
  \|\partial_t^2e_a^1(\cdot,0)\|^2\le \nu^2(\|f(\cdot,0)\|_{H^{2}_{x}}^2+\|h\|_{H^4_x}^2).
$$
We thus obtain for $\cR\,e_a^1=(c+\partial_t)^2e_a^1$ the estimate
\begin{equation}\label{eq:aneg:Rea1}
  \|\cR\,e_a^1\|_{L^2_{x,t}}^2\le C\nu^2(\|u\|_{H^{2,2}_{x,t}}^2
    + \|f(\cdot,0)\|_{H^{2}_{x}}^2+\|h\|_{H^4_x}^2).
\end{equation}

{\bf Second advection equation in $\Omega_2$:} Using again 
Theorem~\ref{th:hyperbolic1}, we obtain the estimate
$$
  \|e_a^2(0,\cdot)\|_{L^2_t}^2  \le C( \|\cR\,e_a^1\|_{L^2_{x,t}}^2+\nu^2\|h\|_{H^2_x}^2
   +\|{\cal L}_{ma}e_a^1(L_2,\cdot)\|_{L^2_t}^2).
$$
To evaluate ${\cal L}_{ma}e_a^1(L_2,\cdot)$, we observe that
$u(L_2,\cdot)=u_a(L_2,\cdot)$, so that we have ${\cal
  L}_{ma}e_a^1(L_2,\cdot)=-a\partial_xe_a^1(L_2,\cdot)=\nu\partial_{x}^2u(L_2,\cdot)$.
Therefore, using \eqref{eq:aneg:Rea1}, we get
\begin{equation}\label{eq:ea2surlebord}
  \|e_a^2(0,\cdot)\|_{L^2_t}^2 \le C \nu^2(\|u\|_{H^{2,2}_{x,t}}^2
   +\|f(\cdot,0)\|_{H^{2}_{x}}^2+\|h\|_{H^4_x}^2+\|\partial_x^2u(L_2,\cdot)\|_{L^2_t}^2).
\end{equation}

{\bf Advection-diffusion equation in $\Omega_1$:} With Lemma
\ref{lemma:advdifferreuraneg} we obtain
$$
  \|e_{ad}\|_{L^2_{x,t}}^2 \le C \nu^2 \|e_a^2(0,\cdot)\|_{L^2_t}^2.
$$
We can thus conclude using \eqref{eq:ea2surlebord}.

It remains to estimates $\|\partial_x^2u(L_2,\cdot)\|_{L^2_t}$ and
$\|\partial_x^2u \|_{L^2(\Omega_2\times(0,T))}$. If the data is
compactly supported, there is only one boundary layer, at $x=-L_1$,
and (see \cite{Metivier})
$$
  u(t,x)= U(t,x) + e^{a(x+L_1)/\nu} U(t,0) +{\cal O}(\nu),\quad
  \mbox{(note $a<0$)}.
$$
Here, $U$ is the solution of the advection equation in $\Omega$ with
data $g_2$ at $x=L_2$.  The norm of $\partial_{xx} u$, though not
bounded in the entire interval $\Omega$, is bounded in $\Omega_2$, since
$$
  \|d_x^2 e^{a(x+L_1)/\nu} \|_{L^2(\Omega_2)}^2
  =\frac{|a|^3}{2\nu^3} (e^{2a L_1/\nu}-e^{2a(L_2+L_1)/\nu})
  \sim \frac{|a|^3}{2\nu^3} e^{2a L_1/\nu},
$$
which tends to zero as $\nu$ goes to zero, because $a<0$.
Similarly the value at $L_2$ is bounded.
\end{proof}

\section{Numerical Experiments}\label{SecNum}

We use a Crank-Nicolson scheme for the advection-diffusion equation
and an implicit upwind scheme for the advection equation. We
discretize $\Omega:=(-1,1)$ with $N=64000$ points, which leads to a
spatial step $\Delta x=3.125\times 10^{-5}$ and the time step $\Delta
t=\Delta x$. We choose $c=1$, $g_1 \equiv g_2 \equiv 0$, $T=1$ and the
right hand side, shown in Figure \ref{fig:fxt} on the left, is
$$
  \begin{aligned}
    f(x,t)&=f_1(t)f_2(x,t),\\
    f_1(t)&=(\sin^4(4\pi (t-t_0))+ \sin^4(2\pi(t-t_0))/2)\chi_{t>t_0},
      \quad t_0=0.1,\\
    f_2(x,t)&= e^{-100 x^2/4}+e^{-100(x-t/4-0.4)^2}+e^{-100(x+t/2+0.4)^2}.
  \end{aligned}
$$
\begin{figure}
  \centering
  \includegraphics[width=6cm]{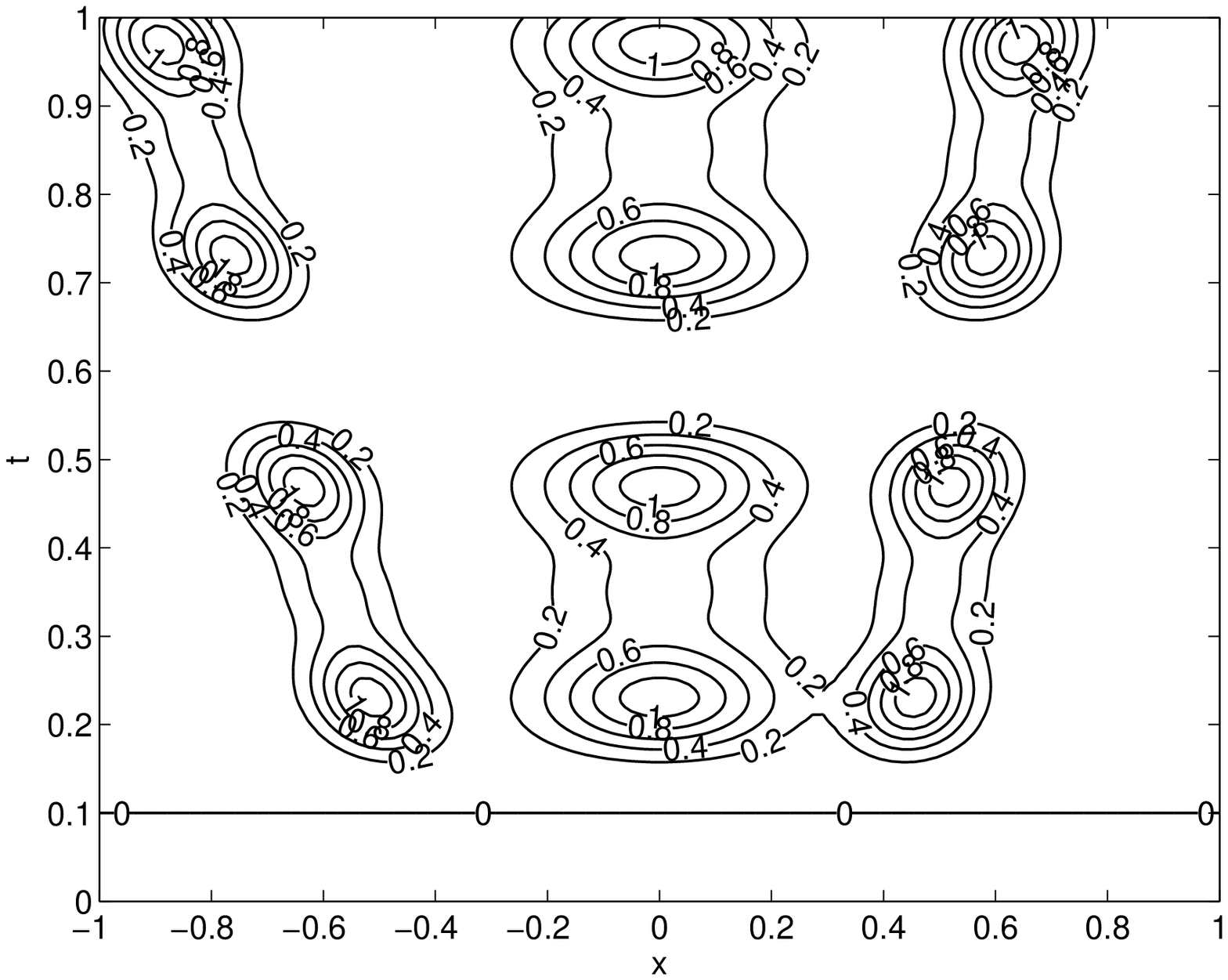}
  \includegraphics[width=6cm]{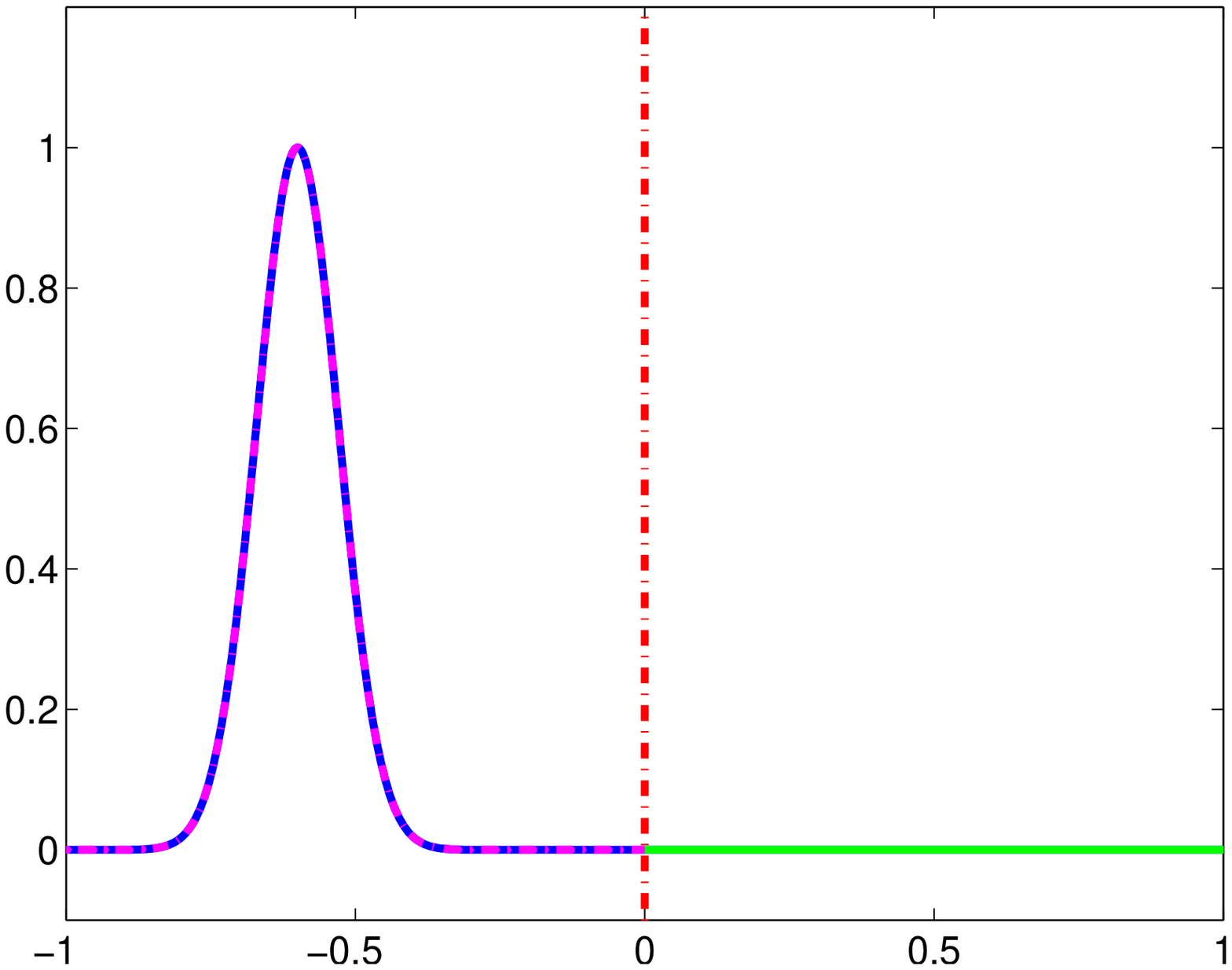}
  \caption{Left: contour plot for the right hand side in space and
    time. Right: initial condition $a>0$}
  \label{fig:fxt}
\end{figure}

\subsection{Positive advection}

We choose $a=1$, with the initial condition, shown in Figure \ref{fig:fxt}
on the right,
$$
  u_0(x)=e^{-100(x-x_0)^2}, \mbox{ with }x_0=-0.6.
$$
Figure \ref{fig:sol} 
\begin{figure}
  \centering
    \mbox{\includegraphics[width=0.33\textwidth]{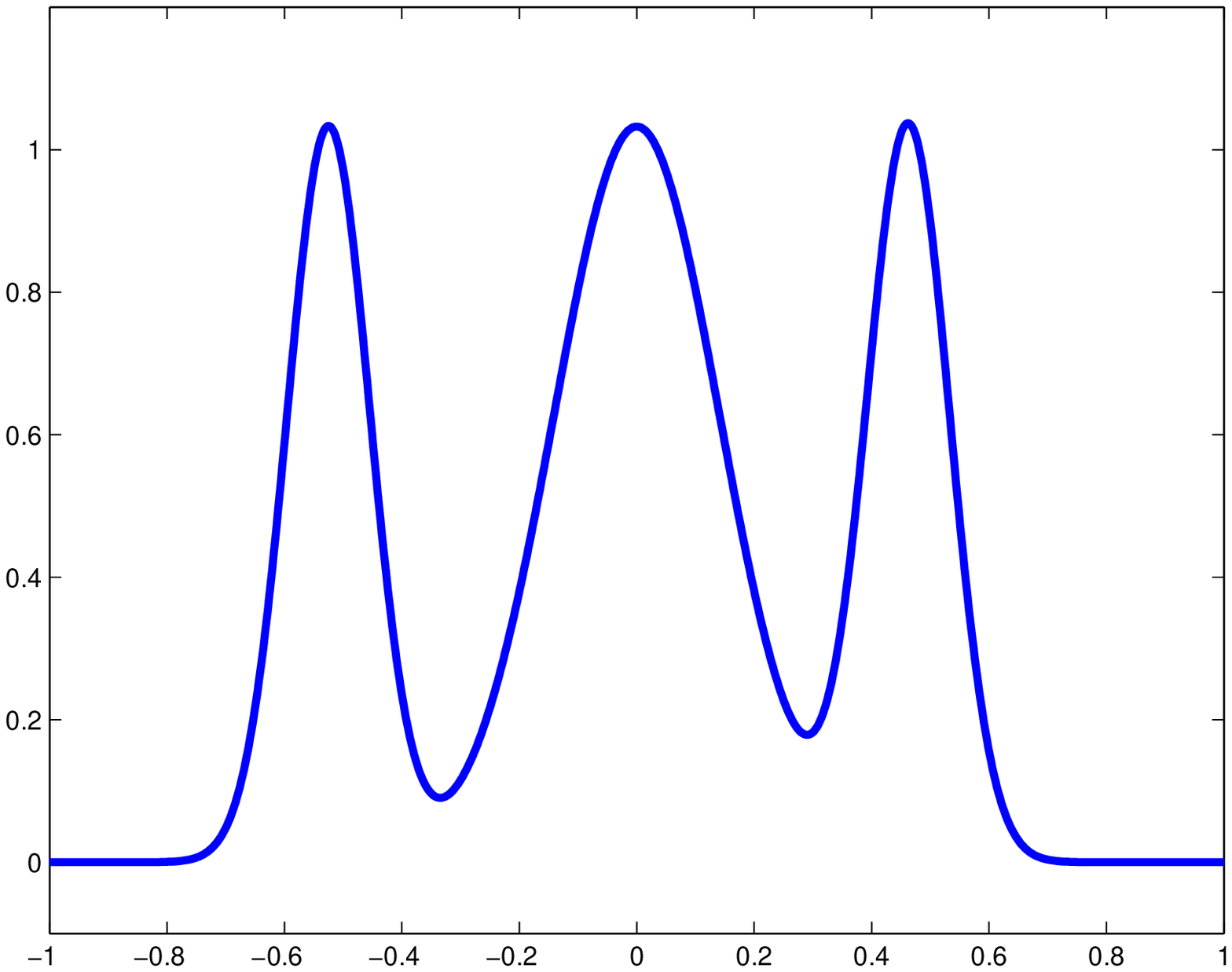}
    \includegraphics[width=0.33\textwidth]{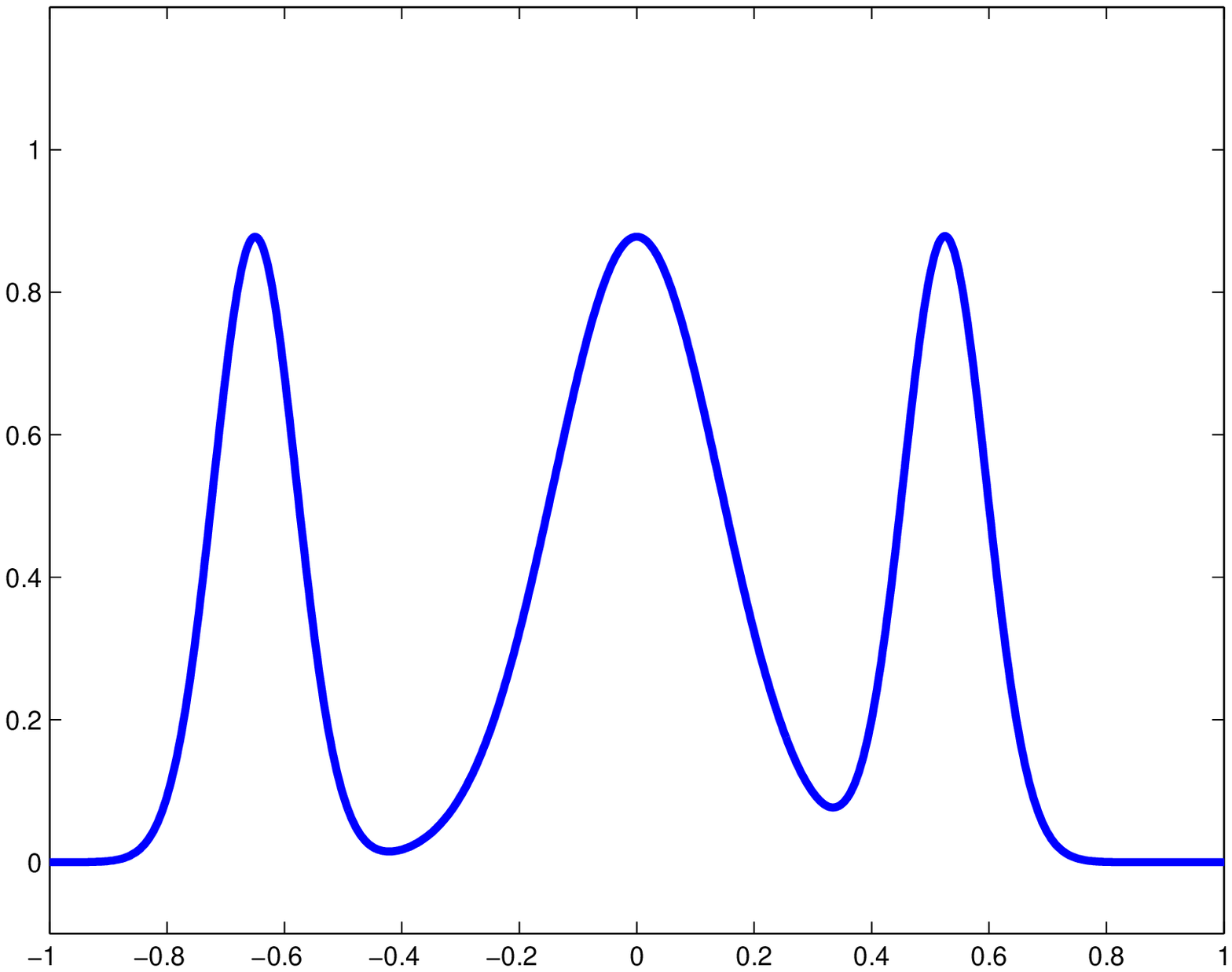}
    \includegraphics[width=0.33\textwidth]{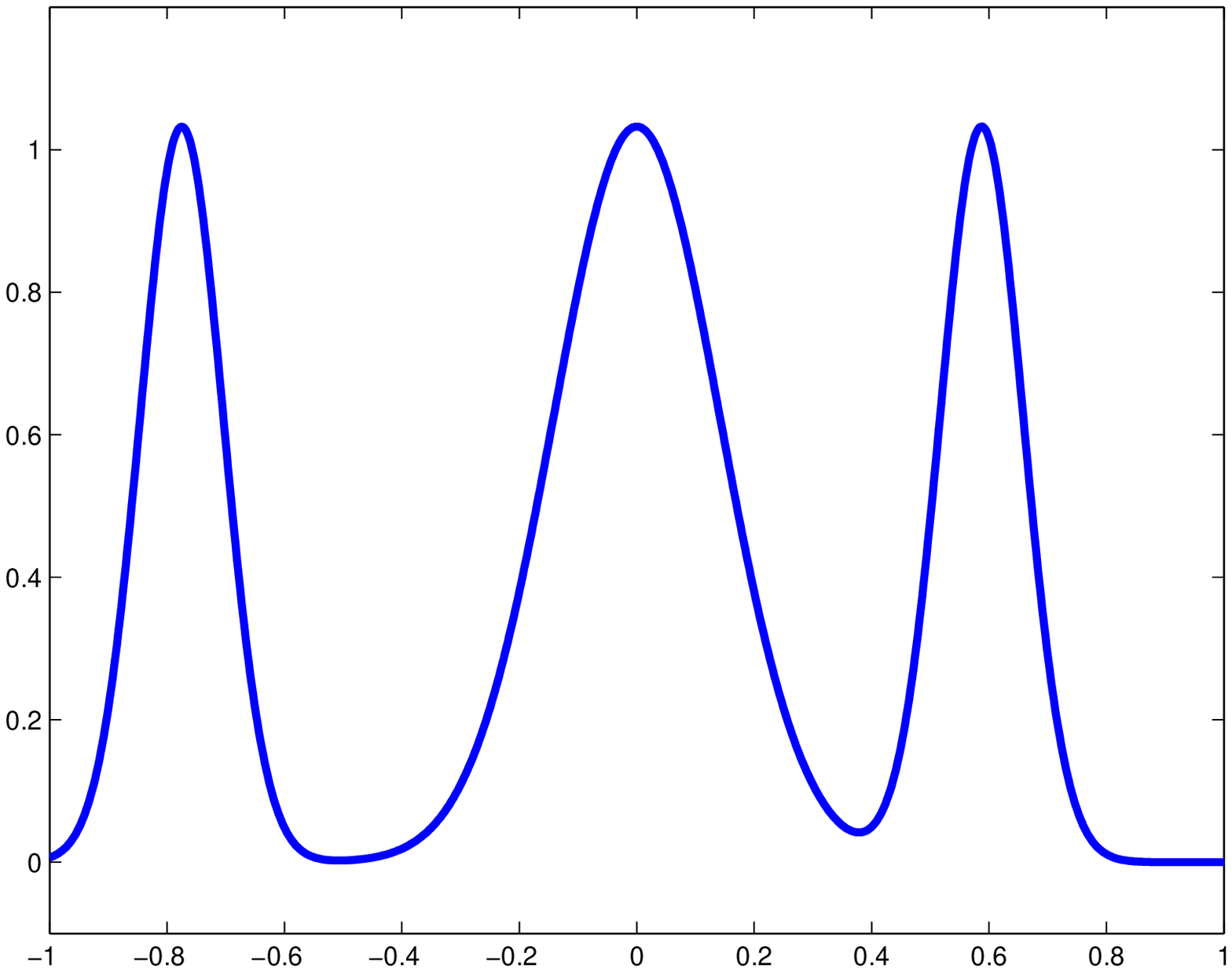}}
    \mbox{\includegraphics[width=0.33\textwidth]{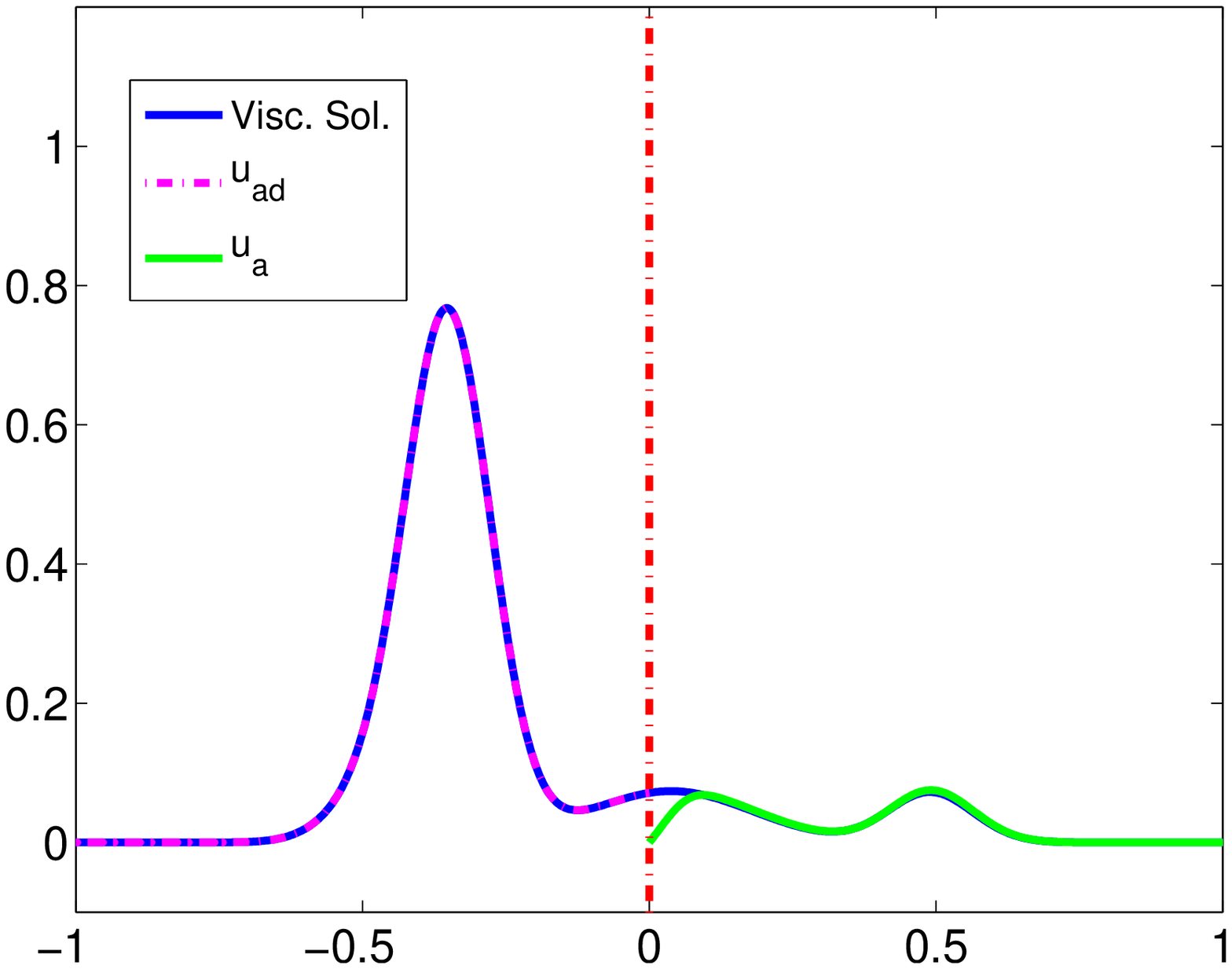}
    \includegraphics[width=0.33\textwidth]{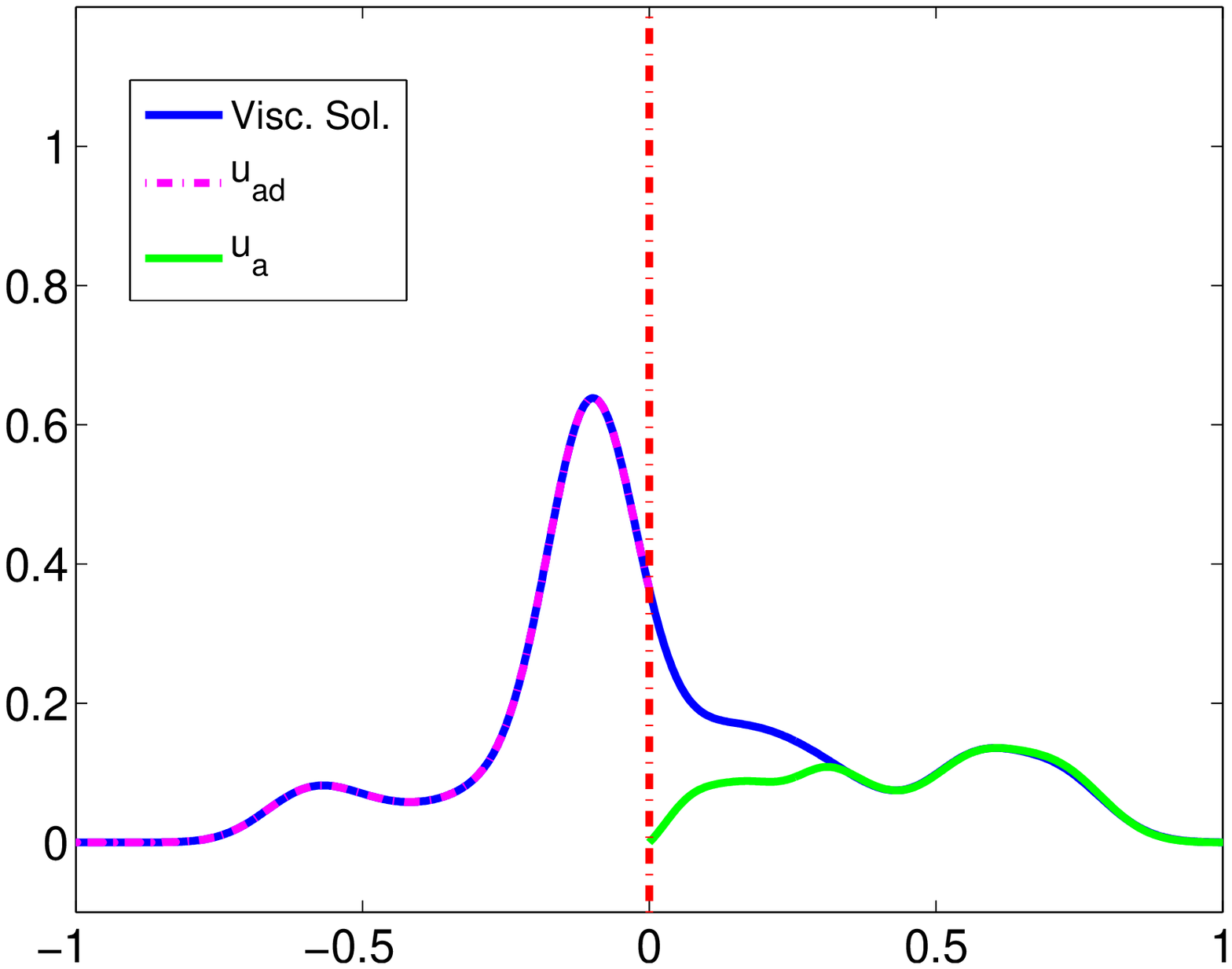}
    \includegraphics[width=0.33\textwidth]{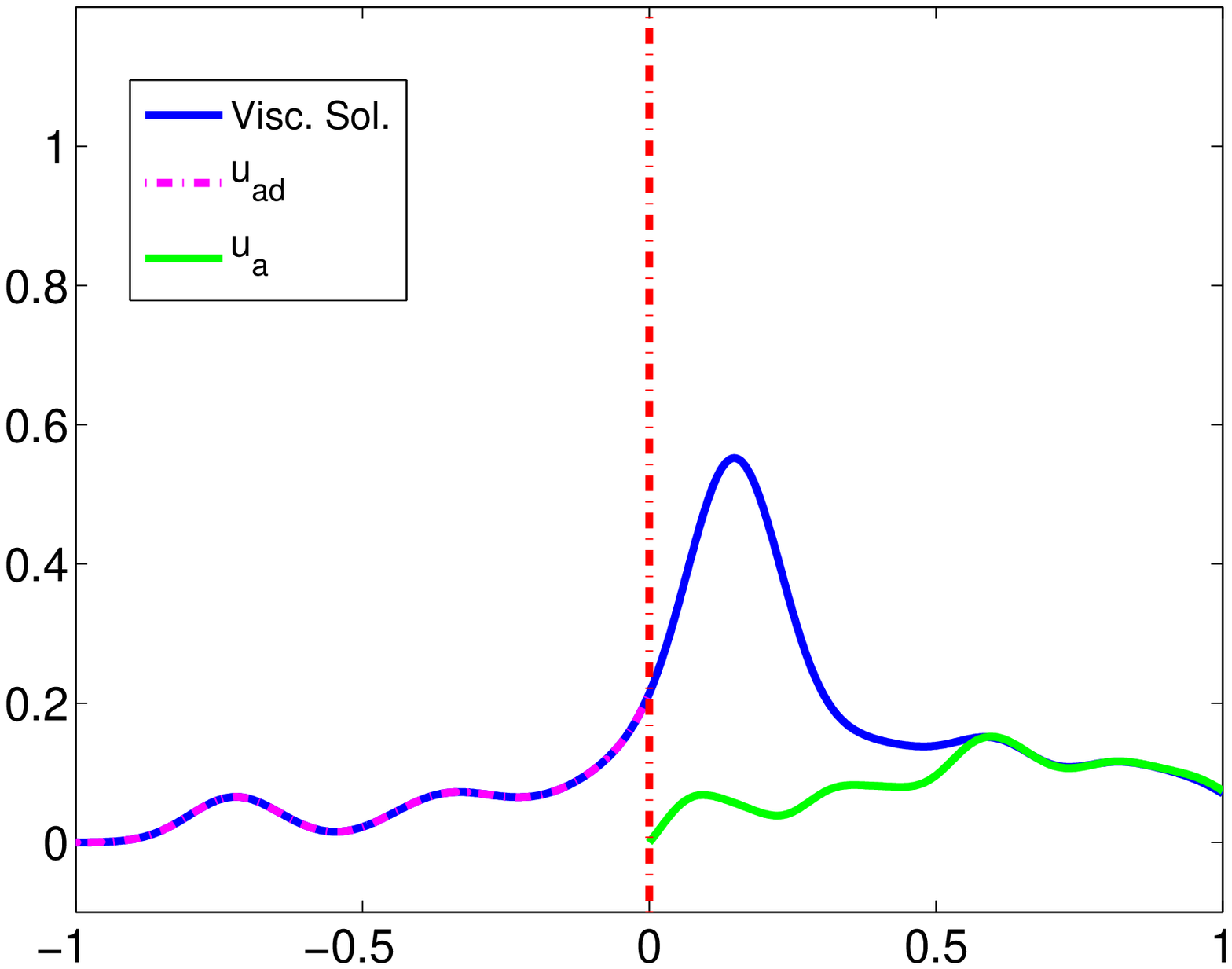}}
    \mbox{\includegraphics[width=0.33\textwidth]{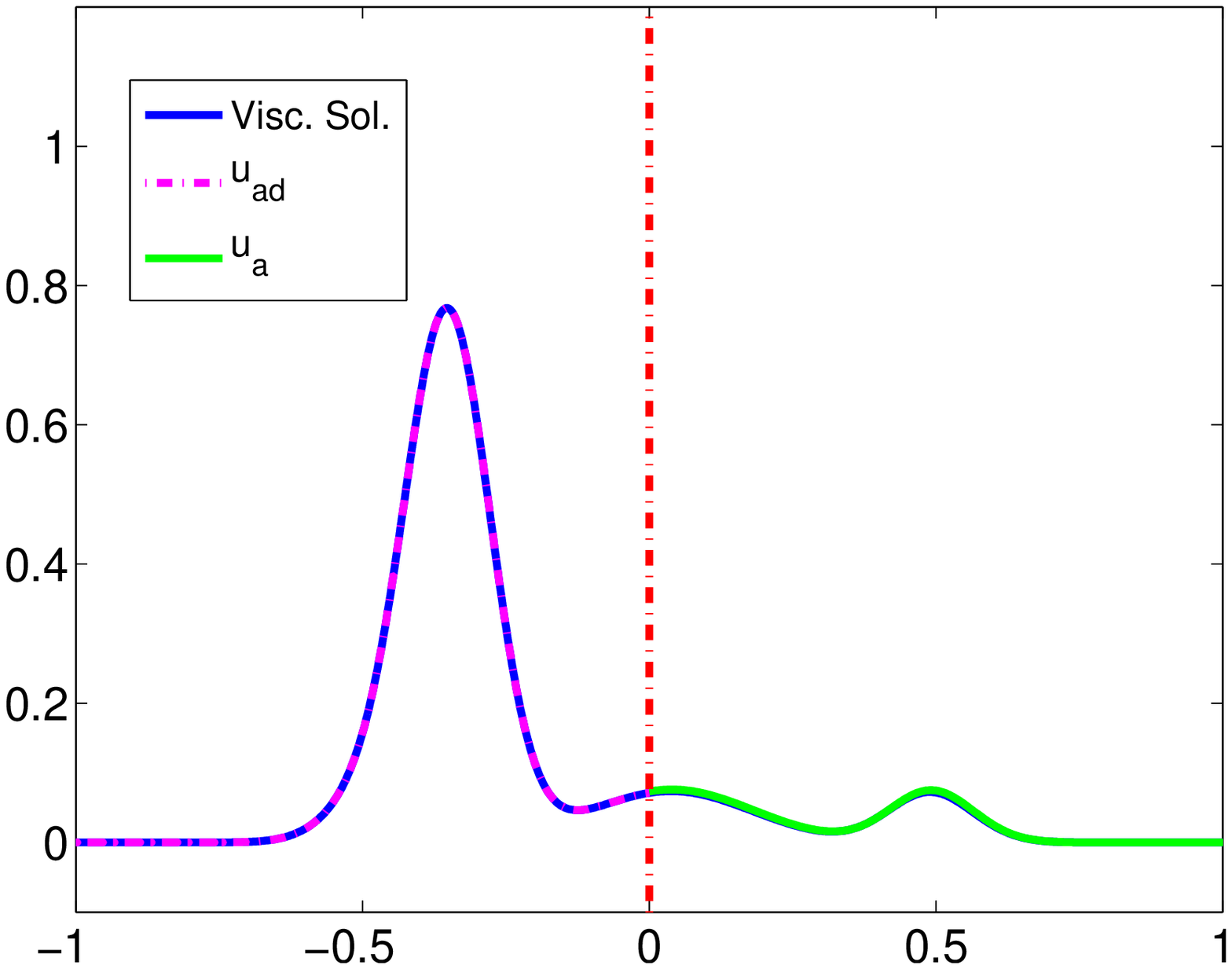}
    \includegraphics[width=0.33\textwidth]{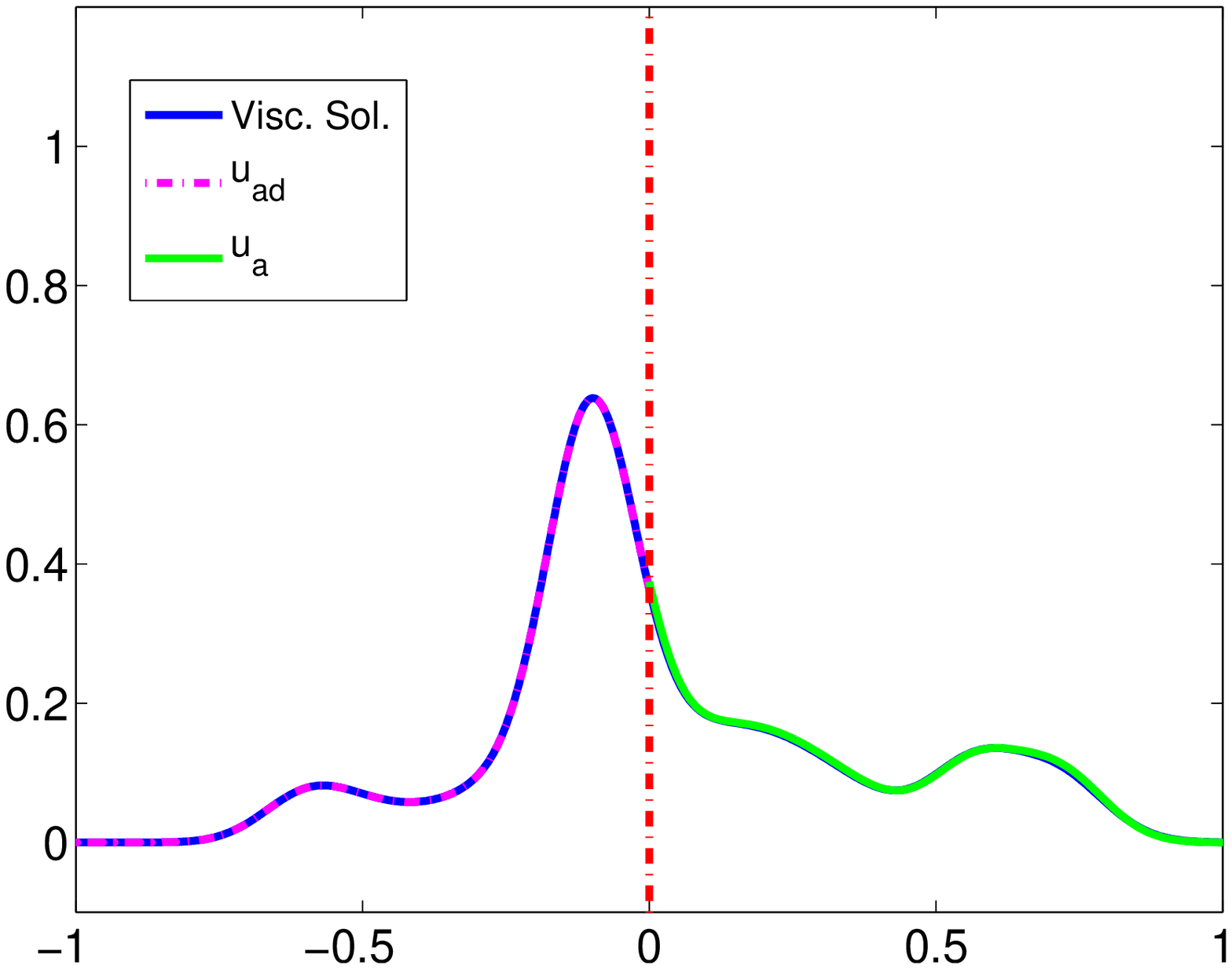}
    \includegraphics[width=0.33\textwidth]{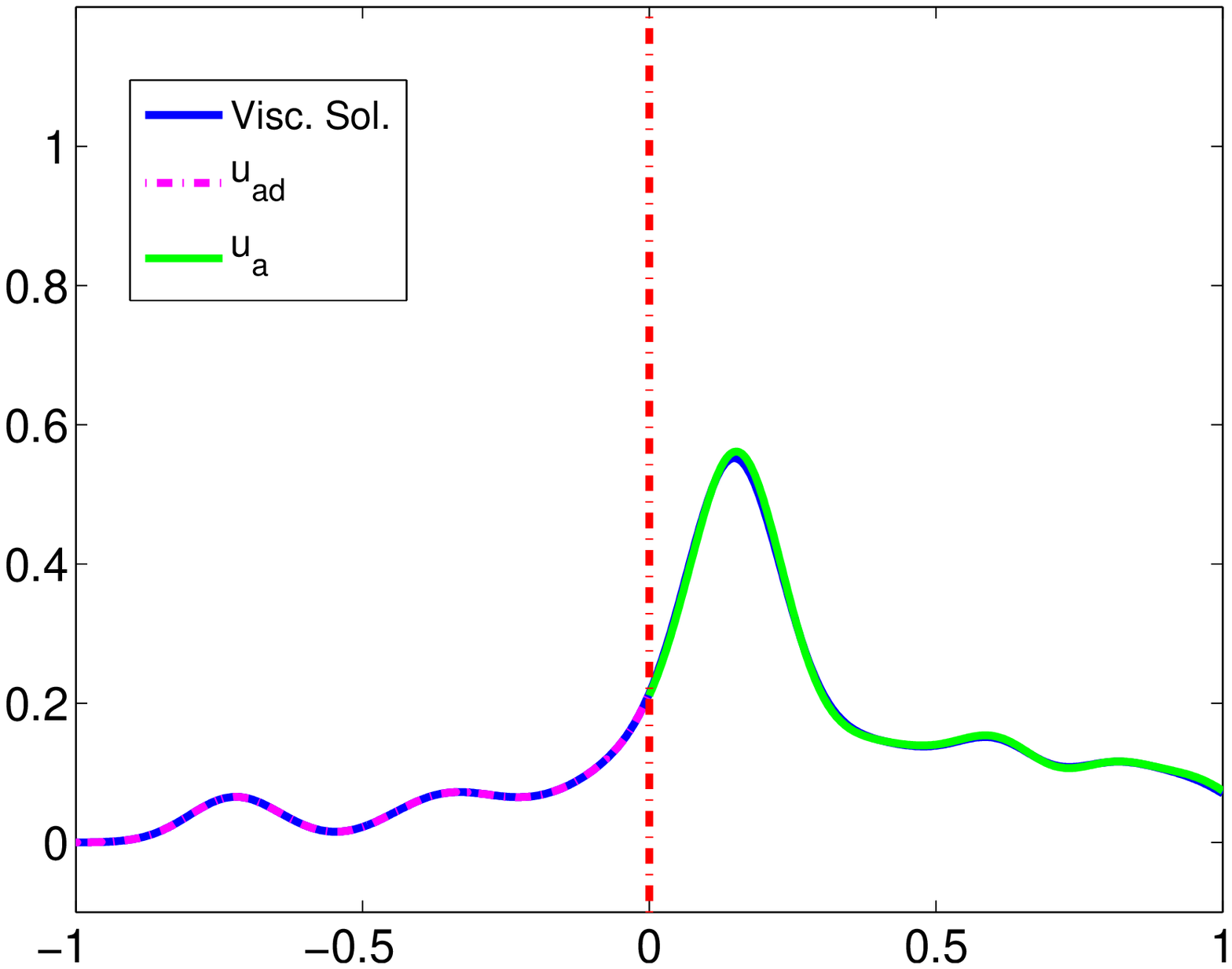}}
  \caption{From left to right: snapshots at time $t=0.25$, $0.5$ and
    $0.75$. First line: right hand side. Second line: solution of
    Algorithm \eqref{algoapos} at iteration $k=1$. Third line:
    solution of Algorithm \eqref{algoapos} at iteration $k=2$ }
  \label{fig:sol}
\end{figure}
shows first snapshots in time of the right hand side, and then of the
viscous solution \eqref{eqadvisqapos} and the solution obtained by the
factorization algorithm~\eqref{algoapos} after one and two iterations
when $\nu=10^{-3}$.  We see that in the first iteration the solution
$u_{ad}^1$ is very close to the viscous solution. This solution is
improved with the second iteration when $u_a$ is also improved.

Figure~\ref{fig:errapos} 
\begin{figure}
  \centering
  \includegraphics[width=0.49\textwidth]{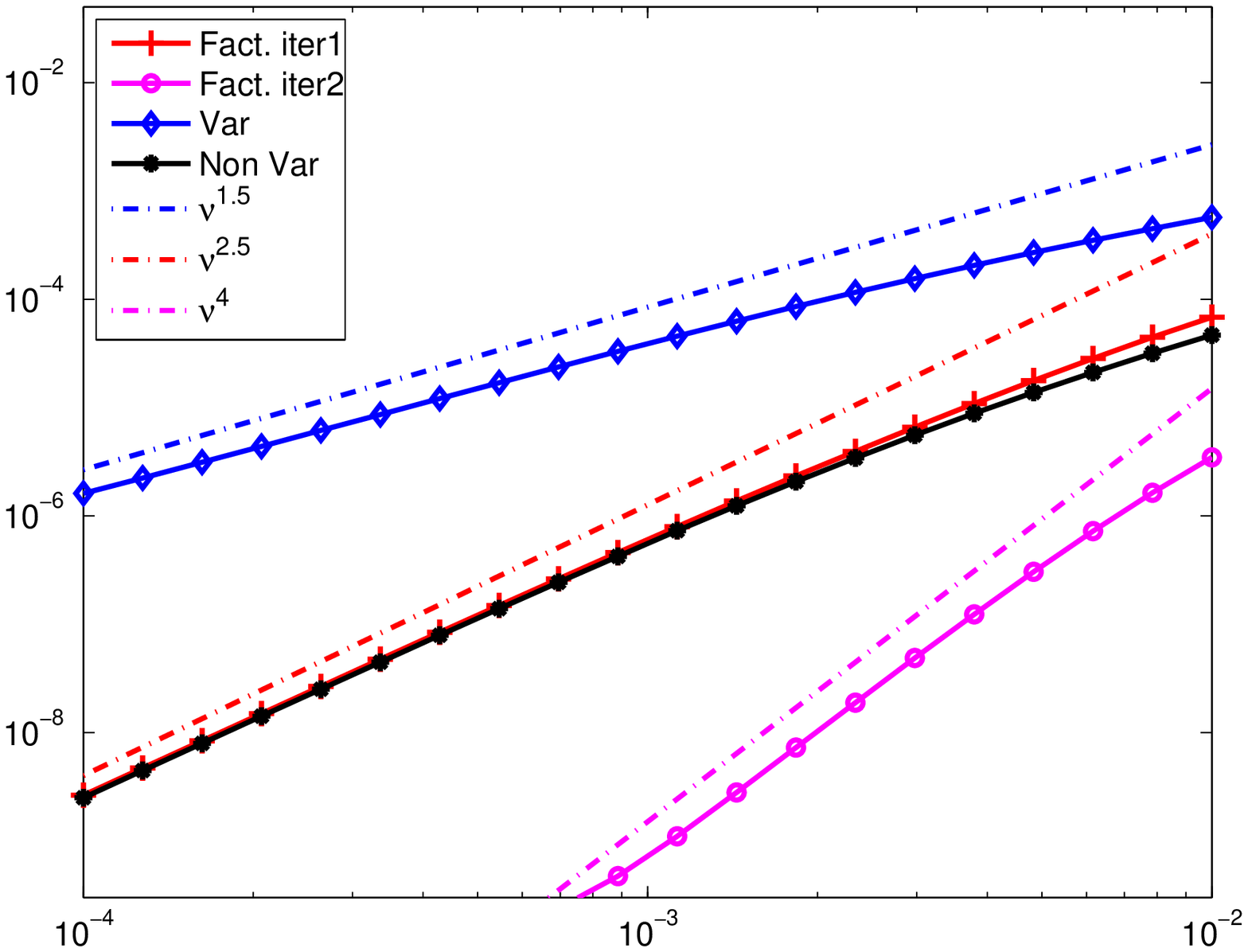}
  \includegraphics[width=0.49\textwidth]{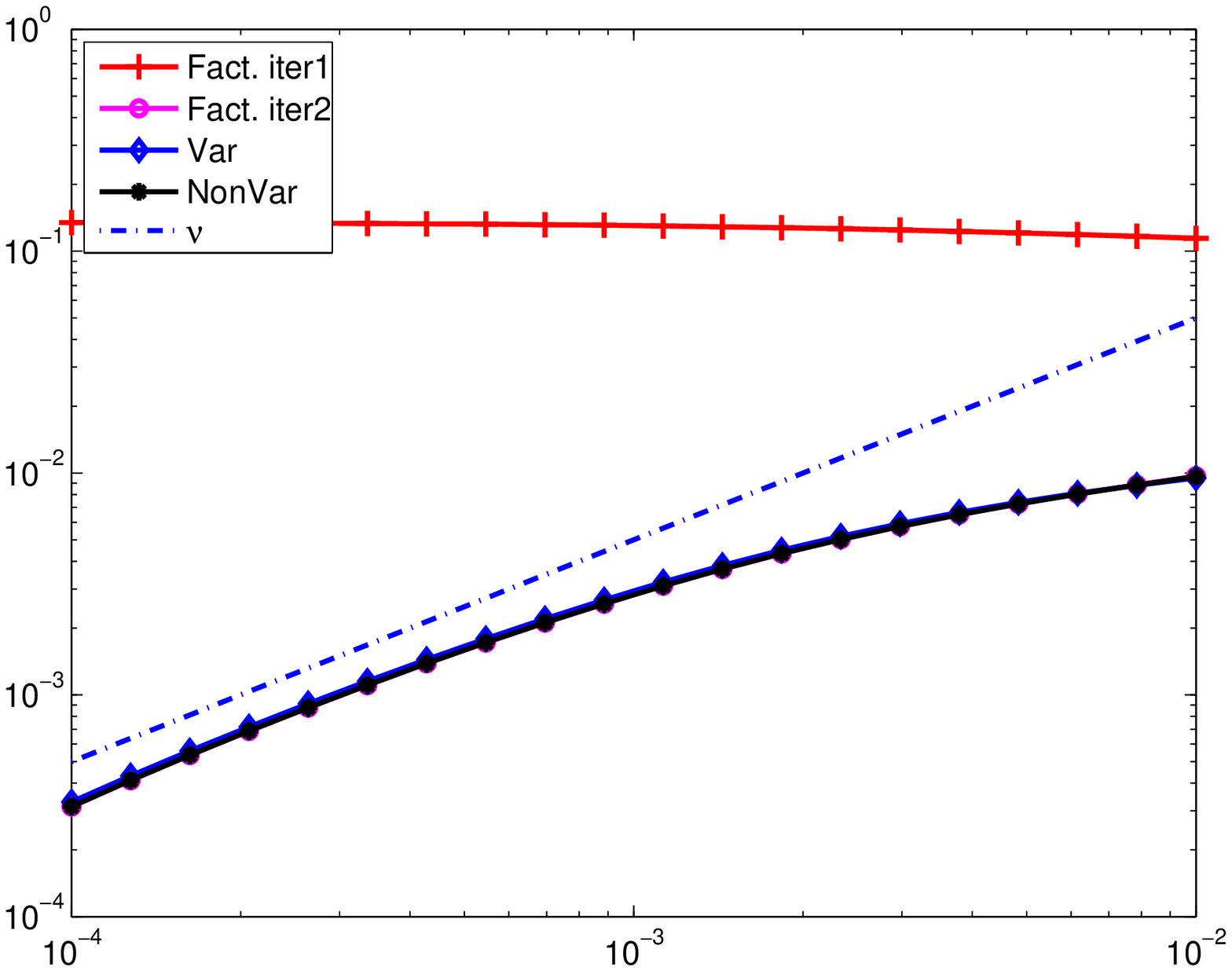}
  \caption{Errors for $a>0$ as the viscosity becomes small for our
    factorization algorithm compared to other coupling algorithms from
    the literature. Left: $\|u-u_{ad}\|_{L^2_{x,t}}$. Right:
    $\|u-u_{a}\|_{L^2_{x,t}}$}
  \label{fig:errapos}
\end{figure}
shows the $L^2$ space-time error as a function of the viscosity
becoming small for the factorization algorithm \eqref{algoapos} and
gives a comparison to algorithms from the literature. These algorithms
solve an advection-diffusion equation ${\cal L}_{ad} u_{ad} =f$ in
$\Omega_1$ and an advection equation ${\cal L}_{a} u_{a} =f$ in
$\Omega_2$, and use for $a>0$ either non-variational transmission
conditions $\partial_xu_{ad}(0,\cdot)=\partial_xu_{a}(0,\cdot)$ and
$u_{ad}(0,\cdot)=u_{a}(0,\cdot)$, see
\cite{Dubach:1993:CRE,Gastaldi:1989:OTC}, or variational transmission
conditions $\nu \partial_xu_{ad}(0,\cdot)=0$ and
$u_{ad}(0,\cdot)=u_{a}(0,\cdot)$, see
\cite{Gastaldi:1989:OTC,Gastaldi:1990:OCT}. We see that the
variational transmission conditions do not need an iteration in this
case, one can first solve advection-diffusion, and then advection. The
error is however ${\cal O}(\nu^{\frac{3}{2}})$ in the viscous region
$\Omega_1$. With only one iteration of the factorization algorithm,
the error is ${\cal O}(\nu^{\frac{5}{2}})$, and with two iterations we
get ${\cal O}(\nu^4)$, both corresponding to our theoretical results
in Theorem \ref{th:theresult}. The non-variational transmission
conditions also give an error ${\cal O}(\nu^{\frac{5}{2}})$, as good
as with one iteration of the factorization algorithm, but one needs to
iterate and choosing a good relaxation parameter to ensure convergence
is not easy; we chose heuristically $\theta=\frac{1}{450\sqrt{\nu}}$
in our computations.  In the inviscid subregion $\Omega_2$, the error
of all methods is ${\cal O}(\nu)$, only the initialization step in the
factorization algorithm has an error of ${\cal O}(1)$, as predicted by
Theorem \ref{th:theresult}.

\subsection{Negative advection}

We now consider a negative advection example, $a=-1$, with 
initial condition
$$
  u_0(x)=e^{-100(x-x_0)^2},\mbox{ with } x_0=0.5.
$$
\begin{figure}
  \centering
  \includegraphics[width=0.49\textwidth]{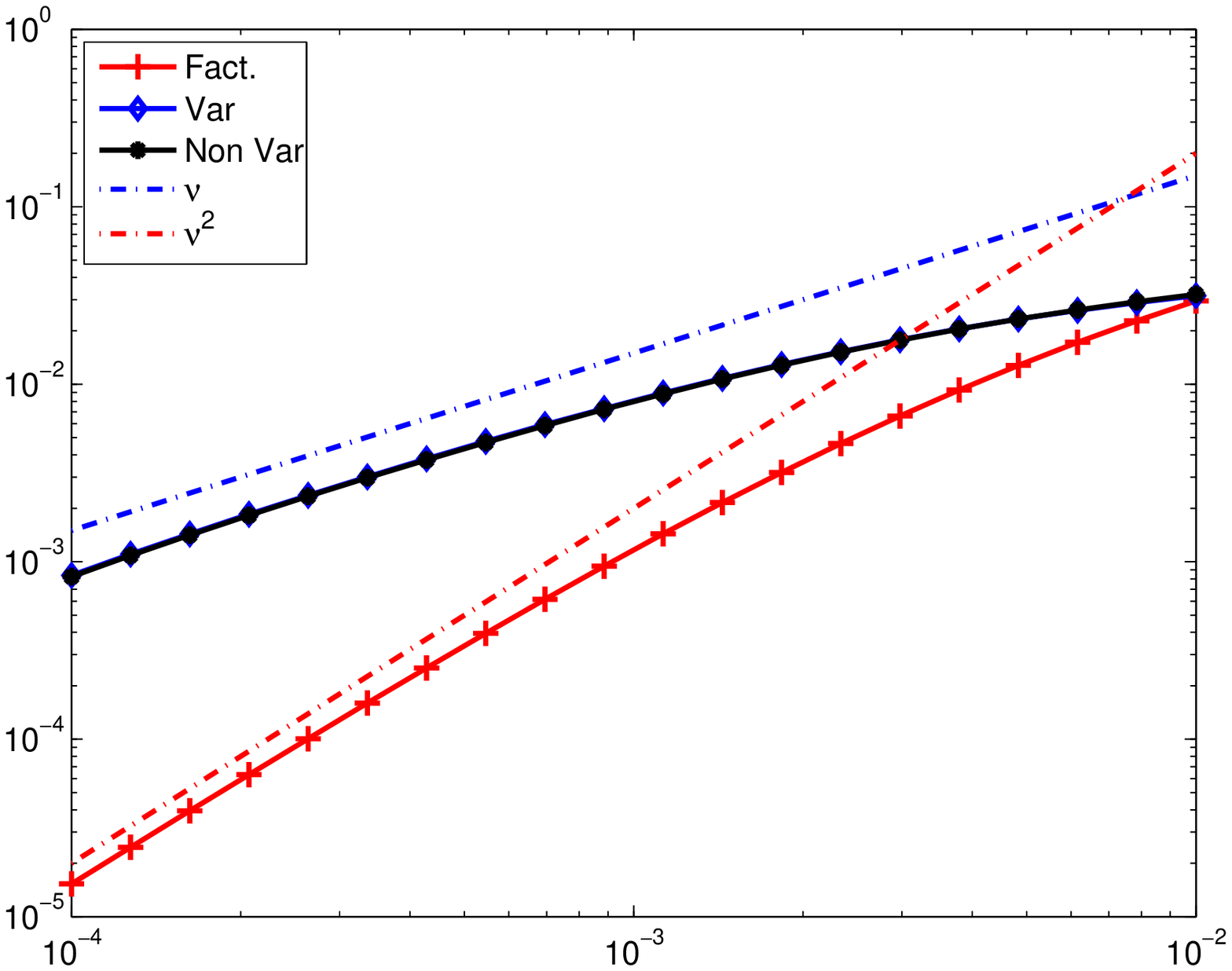}
  \includegraphics[width=0.49\textwidth]{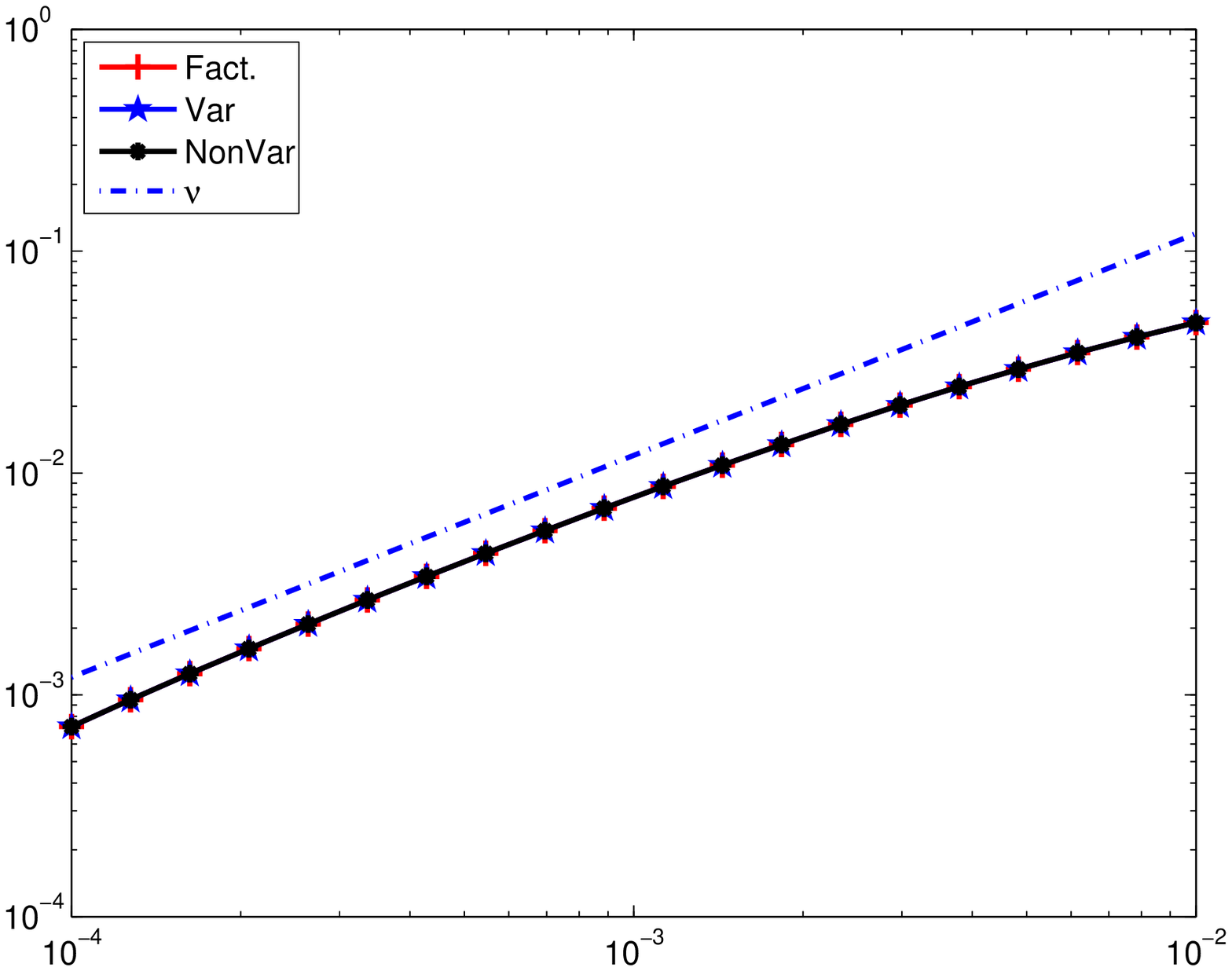}
  \caption{Errors for $a<0$ as the viscosity becomes small for the factorization algorithm compared to other coupling algorithms from the literature. Left: $\|u-u_{ad}\|_{L^2_{x,t}}$. Right: $\|u-u_{a}\|_{L^2_{x,t}}$}
\label{fig:erraneg}
\end{figure}
Figure \ref{fig:erraneg} shows the $L^2$ space-time error
between the viscous solution and the solution of the factorization
algorithm \eqref{algoaneg}, and also a comparison to the errors of the other
coupling algorithms from the literature; the variational coupling
conditions for $a<0$ are
$-\nu\partial_xu_{ad}(0,\cdot)+au_{ad}(0,\cdot)=au_{a}(0,\cdot)$, and
the non-variational ones are $u_{ad}(0,\cdot)=u_{a}(0,\cdot)$. Once
again the error in $\Omega_2$ is ${\cal O}(\nu)$ for each algorithm,
since each algorithm solves the same advection equation in
$\Omega_2$. However the factorization algorithm solves then a second
advection equation which provides a better boundary value for the
advection-diffusion problem in $\Omega_1$ and thus can provide an
error ${\cal O}(\nu^2)$, whereas the other algorithms only give an
approximation ${\cal O}(\nu)$ in $\Omega_1$.

\section{Conclusions}

We introduced a new algorithm to solve advection diffusion problems
with pure advection approximation in a subregion. We call this
algorithm factorization algorithm, because it is based on a
factorization of the underlying operator. We proved rigorous error
estimates that show that our new algorithm gives solutions that are
closer to the fully viscous solution of interest than other coupling
algorithms in the literature. Our numerical experiments indicate that
our estimates are sharp, an issue we are currently investigating using
multiscale expansions.

\bibliographystyle{siam} \bibliography{paper}

\end{document}